\documentclass[12pt,english,a4paper,oneside]{amsart}

\usepackage[cp1251]{inputenc}
\usepackage[english]{babel}
\usepackage{amsmath,amsthm,amssymb}

\usepackage[
a4paper,
left=22mm,
right=18mm,
top=20mm,
bottom=27mm
]{geometry}

\usepackage{xcolor}
\usepackage[colorlinks]{hyperref}

\newtheorem{theorem}{Theorem}[section]
\newtheorem{lemma}[theorem]{Lemma}

\numberwithin{equation}{section}

\theoremstyle{definition}

\newtheorem{remark}[theorem]{Remark}

\begin{document}

\title[Estimates of the total variation distance]{Estimates of the total variation distance between laws of Sobolev mappings on Gaussian spaces}

\author{Egor Kosov}

\address{\noindent Egor Kosov,
Centre de Recerca Matem\`atica, Campus de Bellaterra, Edifici~C 08193
Bellaterra (Barcelona), Spain.}
\email{kosoved09@gmail.com}

\author{Anastasiia Zhukova}

\address{\noindent
Anastasiia Zhukova,
Faculty of Mechanics and Mathematics, Lomonosov Moscow State University, Moscow, 119991 Russia}

\subjclass[2020]{Primary 60H07; Secondary 60G15, 60E15, 28C20, 46E35}

\keywords{Wiener chaos; Sobolev mappings; Malliavin matrix; total variation distance; Kantorovich--Rubinstein distance; Fortet--Mourier distance; fractional regularity; Besov spaces}

\begin{abstract}
Under small-ball bounds for the Malliavin determinants of two
$\mathbb R^k$-valued Sobolev mappings on a Gaussian space, we estimate the
total variation distance between their laws both in terms of the
Kantorovich--Rubinstein distance and in terms of the distance between the
mappings in the corresponding Sobolev space.
In particular, our results yield new total variation distance estimates for
distributions of random vectors whose components belong to finite sums of
Wiener chaoses, with exponents improved by an asymptotic factor of two.

The proof is based on fractional regularity estimates for distributions of Sobolev mappings.
Namely, we show that if an $\mathbb R^k$-valued mapping has components
in $W^{2,p}(\gamma)$ and the determinant of the corresponding Malliavin matrix
satisfies a small-ball bound of order $\varkappa\in(0,1]$, then the law of the mapping has fractional regularity of order
\[
\frac{\varkappa}{1+(2k-1)\varkappa p^{-1}}.
\]
In particular, for large $p$, this gives regularity of order $\varkappa$ up to
an $O(p^{-1})$ loss.
\end{abstract}

\maketitle

\section{Introduction}

\subsection{Convergence of polynomial functionals on Gaussian spaces}

Let $\gamma$ be a centered Gaussian measure on a locally convex space $E$.
We denote by $\mathcal H_m(\gamma)$ the $m$th Wiener chaos associated with $\gamma$ and set
\[
\mathcal P_d(\gamma):=\bigoplus_{m=0}^d \mathcal H_m(\gamma).
\]
Thus, elements of $\mathcal P_d(\gamma)$ are polynomial functionals of the underlying random element, or equivalently finite sums of multiple stochastic integrals of orders at most $d$. Understanding the structural and asymptotic properties of their distributions is a classical problem in Gaussian analysis.

In the special case of normal approximation, the theory is by now very well developed. The fourth moment theorem of Nualart and Peccati~\cite{NP05} and its multidimensional extension by Peccati and Tudor~\cite{PT04} show that, within a fixed Wiener chaos, convergence to the normal law is governed by a remarkably simple moment condition. These results initiated a large body of work on quantitative and stronger forms of normal convergence on Wiener chaoses. See, for example,~\cite{ENP25,HMP24,HLN14,NP09,NP12,NN16} and the references therein.

In contrast, convergence to non-Gaussian limits within Wiener chaoses, and more generally for polynomial functionals of bounded degree, is substantially less understood. An important recent breakthrough in this direction is the result of Herry, Malicet and Poly~\cite{HMP}, which shows that the class of laws of elements of $\mathcal P_d(\gamma)$ is closed under weak convergence. 

The present paper addresses a complementary quantitative problem: to obtain
explicit estimates that upgrade convergence in distribution of random vectors
with sufficiently regular components, including components in
$\mathcal P_d(\gamma)$, to convergence in total variation. More precisely,
given two mappings
$f,g\colon E\to \mathbb R^k$,
we study estimates of the form
\begin{equation}\label{eq:intro-TV-KR-main}
d_{\rm TV}(f,g)\le C d_{\rm KR}(f,g)^\beta,
\end{equation}
where $d_{\rm TV}(f,g)$ denotes the total variation distance between the laws
of $f$ and $g$, while $d_{\rm KR}$ denotes the bounded
Kantorovich--Rubinstein distance, equivalently the Fortet--Mourier distance. 
The latter metrizes convergence in distribution. Here and below, when writing
distances between distributions, we do not distinguish notationally between a
mapping and its law under~$\gamma$.

\subsection{Known results and main contributions for polynomial mappings}

The first general result showing that convergence in distribution for finite sums of Wiener chaoses can be upgraded to convergence in total variation was obtained by Nourdin and Poly~\cite{NP13}. In quantitative form, they showed that if $f,g\in \mathcal P_d(\gamma)$ satisfy suitable two-sided variance assumptions, then \eqref{eq:intro-TV-KR-main} holds with exponent
$\beta=\frac{1}{2d+1}$.
Their result was subsequently strengthened in~\cite{Kos}, where the corresponding
exponent was improved to
$\beta=\frac{1}{d+1}$
(see also~\cite{Kos25}). This is close to optimal in the scalar-valued polynomial setting, since in that
case one cannot, in general, replace $\beta$ by any exponent larger than~$1/d$.

The vector-valued case is substantially more delicate. In particular, unlike
in the scalar-valued case, there is no clear candidate for the sharp exponent
$\beta$. To the best of our knowledge, all presently available obstruction
examples are scalar-valued.

The first general estimate of the form~\eqref{eq:intro-TV-KR-main} for random vectors of the form 
\[
f=(f_1,\ldots,f_k),\quad g=(g_1,\ldots,g_k),
\quad f_j,g_j\in\mathcal P_d(\gamma),
\]
was obtained by Nourdin, Nualart and Poly~\cite{NNP}. Under the assumptions
\begin{equation}\label{eq:intro-assumption}
	\int_E \Delta_f\,d\gamma\ge a,
	\quad
	\int_E \Delta_g\,d\gamma\ge a,
	\quad
	\max_{1\le j\le k}{\rm Var}_\gamma(f_j)\le b,
	\quad
	\max_{1\le j\le k}{\rm Var}_\gamma(g_j)\le b,
\end{equation}
where $\Delta_f$ and $\Delta_g$ denote the corresponding Malliavin determinants, they proved that~\eqref{eq:intro-TV-KR-main} holds for every
\[
\beta<
\frac{1}{(k+1)(4k(d-1)+3)+1}.
\]
This was subsequently improved in~\cite{BKZ} to the range
\[
\beta<
\frac{1}{4k(d-1)+1}.
\]

Our main result for polynomial mappings, Theorem~\ref{t2.2}, improves
this exponent by an asymptotic factor of two. Namely, under the same assumptions~\eqref{eq:intro-assumption}, the estimate~\eqref{eq:intro-TV-KR-main} holds for every
\[
\beta<\beta_1:=
\frac{1}{2k(d-1)+1}.
\]
The theorem also gives an endpoint-type estimate with exponent $\beta=\beta_1$, up to an additional logarithmic factor.

The improvement comes from total variation distance estimates in a more
general setting, namely for mappings with Sobolev components. For such
mappings, we obtain new bounds for the norm of the gradient of the
Malliavin determinant and for the norm of the adjugate of the Malliavin
matrix. Both new bounds contain an additional factor $\sqrt{\Delta_f}$, which
is responsible for the sharper total variation distance estimates.

\subsection{Mappings with Sobolev components}

Total variation distance estimates and related questions for mappings with
sufficiently regular components have been extensively studied in the Gaussian
setting and in more general frameworks. See, for example,
\cite{BC14,BC17,BC19,BCP,HMP25, Kos-FCAA,Kos-Adv,Kos23,KosZh}.

We prove general estimates of the form~\eqref{eq:intro-TV-KR-main}
for mappings with Sobolev components. More precisely, Theorem~\ref{t2.1} below asserts that, for mappings
\[
f=(f_1,\ldots,f_k),\quad g=(g_1,\ldots,g_k),
\quad f_j,g_j\in W^{2,p}(\gamma),
\]
the estimate~\eqref{eq:intro-TV-KR-main} holds with exponent
\[
\beta=
\frac{\varkappa}{1+\varkappa+\frac{\varkappa(2k-1)}{p}}
\]
provided that $p\ge 2k$, the Sobolev norms of the mappings are bounded, and
the corresponding small-ball estimates
\[
\gamma(\Delta_f\le \varepsilon)\le a\varepsilon^\varkappa,\quad
\gamma(\Delta_g\le \varepsilon)\le a\varepsilon^\varkappa,
\quad \forall \varepsilon>0,
\]
hold.
This improves the previously known result from~\cite{Kos-FCAA}, where, under
similar assumptions, estimate~\eqref{eq:intro-TV-KR-main} was established with
the smaller exponent
\[
\beta=\frac{\varkappa}{2+\varkappa+\frac{\varkappa(4k-1)}{p}}.
\]

This general Sobolev estimate applies to mappings with components in
$\mathcal P_d(\gamma)$. Indeed, such components belong to all Sobolev spaces
$W^{2,p}(\gamma)$, $p\in[1,\infty)$. Moreover, under the standard Malliavin-type
nondegeneracy assumptions
\[
\int_E \Delta_f\,d\gamma\ge a>0,
\quad
\int_E \Delta_g\,d\gamma\ge a>0,
\]
the required small-ball estimates follow from the Carbery--Wright inequality.
Since
\[
\Delta_f,\Delta_g\in\mathcal P_{2k(d-1)}(\gamma),
\]
we obtain
\[
\gamma(\Delta_f\le \varepsilon)
\le
C(a,d,k)\varepsilon^\varkappa,
\quad
\gamma(\Delta_g\le \varepsilon)
\le
C(a,d,k)\varepsilon^\varkappa,
\quad \varepsilon>0,
\]
with
\[
\varkappa=\frac{1}{2k(d-1)}.
\]
Substituting this value of $\varkappa$ into Theorem~\ref{t2.1} and using the
fact that polynomial components belong to $W^{2,p}(\gamma)$ for all
$p\in[1,\infty)$ gives the corresponding improved estimate of the
form~\eqref{eq:intro-TV-KR-main} for polynomial mappings.

\subsection{Regularity of distributions}

Estimates of the form~\eqref{eq:intro-TV-KR-main} are closely related to
fractional regularity of the corresponding laws. Indeed, applying
\eqref{eq:intro-TV-KR-main} to $g=f+h$, where $h\in\mathbb R^k$, gives
\[
d_{\rm TV}(f,f+h)\le C|h|^\beta.
\]
This translation estimate implies absolute continuity of the law of $f$ and,
more precisely, shows that its density $\varrho$ belongs to the
Nikolskii--Besov space $B^\beta_{1,\infty}(\mathbb R^k)$, that is,
\[
\int_{\mathbb R^k}|\varrho(x+h)-\varrho(x)|\,dx
\le C|h|^\beta,\quad \forall h\in\mathbb R^k,
\]
for some constant $C>0$ (see, for example,~\cite{BIN,Stein}).

Our approach is therefore based on first deriving Besov-type fractional
regularity estimates for the laws of mappings with Sobolev components, and
then using a smoothing argument to obtain total variation estimates of the
form~\eqref{eq:intro-TV-KR-main}.

A convenient way to verify such regularity, well suited to Malliavin-type
integration by parts on a Gaussian space, was introduced in~\cite{BKZ,Kos}
and further developed in~\cite{Kos-FCAA,Kos-MS}. For
a Borel measure $\mu$ on $\mathbb R^k$ and $t>0$, set
\[
\sigma(\mu,t):=\sup\Bigl\{\int_{\mathbb R^k}\partial_\theta \varphi\,d\mu
\colon \varphi\in C_0^\infty(\mathbb R^k),\ \|\varphi\|_\infty\le t,\
\|\partial_\theta\varphi\|_\infty\le 1,\ |\theta|=1\Bigr\}.
\]
For a measurable mapping $f\colon E\to\mathbb R^k$, we write
\[
\sigma_f(t):=\sigma(\gamma\circ f^{-1},t),
\]
where $\gamma\circ f^{-1}$ denotes the image measure of $\gamma$ under $f$.
This modulus is equivalent, up to dimensional constants, to the usual $L^1$
translation modulus of continuity of a density (see
\cite[Theorem 2.1]{Kos-MS} and \cite[Theorem 3.1]{Kos-FCAA}). In the present
notation, this can be written as
\[
2^{-1}\sup_{|h|\le t}d_{\rm TV}(f,f+h)
\le \sigma_f(t)
\le 6k \sup_{|h|\le t}d_{\rm TV}(f,f+h).
\]
The link between fractional regularity and estimates for the total variation
distance is provided by the smoothing inequality (see~\cite[Lemma 3.1]{Kos-FCAA}):
\begin{equation}\label{eq:intro-TV-est}
d_{\rm TV}(f,g)
\le 6\sqrt{k}\max\{\sigma_f(t),\sigma_g(t)\}
+\sqrt{k}\,t^{-1}d_{\rm KR}(f,g),
\quad
\forall t\in(0,1].
\end{equation}
After optimizing in $t$, this inequality turns estimates on $\sigma_f$ and
$\sigma_g$ into bounds of the form~\eqref{eq:intro-TV-KR-main}.

\subsection{Fractional regularity results}

Fractional regularity for densities of the laws of Sobolev mappings
$f=(f_1,\ldots,f_k)$, $f_j\in W^{2,p}(\gamma)$, was studied
in~\cite{Kos-FCAA}. It was proved there that, if the Sobolev norms of the
components are bounded and
\[
\gamma(\Delta_f\le \varepsilon)\le a\varepsilon^\varkappa,
\quad \forall \varepsilon>0,
\]
then
\[
\sigma_f(t)\le Ct^\alpha,\quad \forall t>0,
\]
with
\[
\alpha=
\frac{\varkappa}{2+\frac{(4k-1)\varkappa}{p}}.
\]
In the scalar-valued case, the same work gives the better exponent
\[
\alpha=\frac{\varkappa}{1+\frac{\varkappa}{p}}.
\]

Theorem~\ref{t1.1} below narrows this gap between the scalar-valued and
vector-valued cases. Under the same assumptions,
we prove the regularity estimate with the improved exponent
\[
\alpha=
\frac{\varkappa}{1+\frac{(2k-1)\varkappa}{p}}.
\]
For $k=1$, this recovers the scalar-valued exponent. Moreover, for large $p$,
the vector-valued regularity exponent improves from order $\varkappa/2$ to
order $\varkappa$.

In the case of polynomial mappings, Theorem~\ref{t1.2} asserts that for
mappings $f=(f_1,\ldots,f_k)$ with $f_j\in\mathcal P_d(\gamma)$ satisfying the
standard Malliavin-type nondegeneracy condition
\begin{equation}\label{eq:intro-nondegen-int}
\int_E \Delta_f\,d\gamma\ge a>0
\end{equation}
and suitable upper bounds on the variances of the components, one has
\[
\sigma_f(t)\le Ct^\alpha\quad \forall t>0
\]
for every $\alpha<\varkappa$, where
\[
\varkappa=\frac{1}{2k(d-1)}.
\]
The theorem also gives an endpoint-type regularity estimate with
$\alpha=\varkappa$, up to an additional logarithmic factor.

\subsection{Bounds in terms of Sobolev and $L^2$ distances}

We also obtain estimates for the total variation distance directly in terms of distances between the mappings themselves. Since the Kantorovich--Rubinstein distance is bounded by the $L^2(\gamma)$ distance between the mappings, estimates of the form~\eqref{eq:intro-TV-KR-main} immediately imply corresponding $L^2$-bounds. However, a direct argument often gives better exponents, and this leads to a second group of results in the paper.

In the scalar-valued case, estimates of this type go back to Davydov and Martynova~\cite{DM87} and were later developed by Nourdin and Poly~\cite{NP13}. A sharp one-dimensional estimate was obtained in~\cite{Kos-IMRN}: for every $f,g\in\mathcal P_d(\gamma)$,
\begin{equation}\label{eq:intro-TV-L2-1d}
	d_{\rm TV}(f,g)
	\le
	\frac{C(d)}{\operatorname{Var}_\gamma(f)^{1/(2d)}}
	\|f-g\|_{L^2(\gamma)}^{1/d}.
\end{equation}
Thus, in the scalar-valued case, the optimal power $1/d$ can be reached for
estimates in terms of the $L^2(\gamma)$ distance.

As in the estimates involving the Kantorovich--Rubinstein distance, we first prove a Sobolev version. In Theorem~\ref{t3.1}, for mappings
\[
f=(f_1,\ldots,f_k),\quad g=(g_1,\ldots,g_k),
\quad f_j,g_j\in W^{2,p}(\gamma),
\]
satisfying suitable Sobolev bounds and a small-ball assumption 
\[
\gamma(\Delta_f\le \varepsilon)\le a\varepsilon^\varkappa,
\quad \forall \varepsilon>0,
\]
on $\Delta_f$, we obtain an estimate of the form
\[
d_{\rm TV}(f,g)
\le
C\Bigl(\max_{1\le j\le k}\|f_j-g_j\|_{W^{1,p}(\gamma)}\Bigr)^\beta
\]
with
\[
\beta=\frac{\varkappa}{1+\frac{2k\varkappa}{p}}.
\]

For polynomial mappings $f,g$, this yields an improved vector-valued $L^2$ estimate. In~\cite{BKZ}, it was proved that, under the nondegeneracy condition~\eqref{eq:intro-nondegen-int} and an upper bound on the variances of the components of $f$, one has
\begin{equation}\label{eq:intro-TV-L2-main}
	d_{\rm TV}(f,g)
	\le
	C\Bigl(\max_{1\le j\le k}\|f_j-g_j\|_{L^2(\gamma)}\Bigr)^\beta
\end{equation}
for every
\[
\beta<\frac{1}{4k(d-1)}.
\]
Theorem~\ref{t3.2} improves this range by a factor of two. Namely, under the same assumptions on~$f$, the estimate~\eqref{eq:intro-TV-L2-main} holds for every
\[
\beta<\frac{1}{2k(d-1)}.
\]
As in the estimates in terms of the Kantorovich--Rubinstein distance, we also obtain an endpoint-type version with an additional logarithmic factor.

\subsection{Structure of the paper}

The rest of the paper is organized as follows. Section~\ref{sec:prelim} contains the necessary notation and preliminary facts on Gaussian Sobolev spaces, the Ornstein--Uhlenbeck operator, Malliavin matrices, and distances between probability laws. In Section~\ref{sec:key-lemmas}, we prove the key estimates for the Malliavin determinant and for the adjugate of the Malliavin matrix.

Section~\ref{sec:reg} is devoted to fractional regularity estimates for laws of mappings with Sobolev components and to their polynomial consequences. In Section~\ref{sec:KR}, these regularity estimates are used to prove the bounds in terms of Kantorovich--Rubinstein-type distances. Section~\ref{sec:L2} contains the estimates in terms of Sobolev and $L^2(\gamma)$ distances between the mappings.

\section{Preliminaries}
\label{sec:prelim}	

Let $C_0^\infty(\mathbb{R}^k)$ denote the space of all infinitely differentiable functions with compact support in $\mathbb{R}^k$, and let $C_b^\infty(\mathbb{R}^k)$ denote the space of all bounded infinitely differentiable functions on $\mathbb{R}^k$ whose derivatives of all orders are bounded. We denote by $\langle \cdot,\cdot\rangle$ the standard Euclidean inner product on $\mathbb{R}^k$, and by $|\cdot|$ the corresponding norm.

\subsection{Gaussian measures and the Cameron--Martin space}
	
Let $E$ be a real Hausdorff locally convex space with topological dual $E^*$. 
A Radon probability measure $\gamma$ on $E$ is called a centered Gaussian measure if, for every linear functional $\ell \in E^*$, either $\ell = 0$ $\gamma$-a.e., or the image measure $\gamma \circ \ell^{-1}$ is a centered Gaussian measure on $\mathbb{R}$, i.e., it has density
\[
\frac{1}{\sqrt{2\pi \sigma^2}} \exp\Bigl(-\frac{s^2}{2\sigma^2}\Bigr),
\quad \sigma \ge 0.
\]
As usual, $L^p(\gamma)$ denotes the space of functions whose $p$-th power is integrable with respect to the measure $\gamma$, and for
$f\in L^p(\gamma)$,
\[
\|f\|_{L^p(\gamma)} := \Bigl(\int_E |f|^p \, d\gamma\Bigr)^{1/p},
\quad p\in[1,\infty).
\]

For a centered Gaussian measure $\gamma$ on a locally convex space $E$, the corresponding Cameron--Martin space $H(\gamma)\subset E$ consists (see~\cite[\S 2.2, \S 2.4]{Gaus}) of all vectors $h\in E$ with finite Cameron--Martin norm:
\[
\|h\|_{H(\gamma)} := \sup \Bigl\{ \ell(h)\colon \ell \in E^*,\ \int_E \ell^2 \, d\gamma \le 1 \Bigr\}<\infty.
\]
For example, for the standard Gaussian measure $\gamma_n$ on $\mathbb{R}^n$ with density
\[
\frac{1}{(2\pi)^{n/2}} \exp\Bigl(-\frac{|x|^2}{2}\Bigr),
\]
the Cameron--Martin space coincides with $\mathbb{R}^n$ and the Cameron--Martin norm coincides with the usual Euclidean norm, that is, $H(\gamma_n)=\mathbb{R}^n$ and $\|h\|_{H(\gamma_n)} = |h|$.

Note that for any Radon Gaussian measure $\gamma$, its Cameron--Martin space $H(\gamma)$ is a separable Hilbert space (see~\cite[Theorem~3.2.7]{Gaus}). Let $\langle \cdot, \cdot \rangle_{H(\gamma)}$ denote the inner product in $H(\gamma)$.
Let $E_\gamma^*$ be the closure of $E^*$ in $L^2(\gamma)$. There is a natural duality between the spaces $E_\gamma^*$ and $H(\gamma)$ (see~\cite[\S 2.2, \S 2.4]{Gaus}). Namely, for every $\ell \in E_\gamma^*$ there exists an element $h_\ell \in H(\gamma)$ such that for all $\ell' \in E_\gamma^*$ one has
\[
\ell'(h_\ell)
= \ell(h_{\ell'})
= \langle h_\ell, h_{\ell'} \rangle_{H(\gamma)}
= \int_E \ell(x)\ell'(x)\, d\gamma(x).
\]
Conversely, for every $h \in H(\gamma)$ there exists $\ell_h \in E_\gamma^*$ such that $h = h_\ell.$

\subsection{Test functions and Sobolev spaces}
The space $\mathcal{FC}^\infty$ of cylindrical test functions consists of all functions $\varphi$ of the form
\begin{equation}\label{eq-test}
	\varphi(x) = \widetilde{\varphi}\bigl(\ell_1(x), \ldots, \ell_n(x)\bigr),
\end{equation}
where $n \in \mathbb{N}$, $\ell_1, \ldots, \ell_n \in E^*$, and $\widetilde{\varphi} \in C_b^\infty(\mathbb{R}^n)$.
For a function $\varphi \in \mathcal{FC}^\infty$ of this form,
its gradient along the Cameron--Martin space $H(\gamma)$ is defined by
\begin{equation}\label{eq-grad}
D^1_{H(\gamma)}\varphi(x) = \nabla \varphi(x)
:= \sum_{j=1}^n (\partial_j \widetilde{\varphi})\bigl(\ell_1(x), \ldots, \ell_n(x)\bigr)\, h_{\ell_j}.
\end{equation}
By the described duality, we have
\[
\langle \nabla \varphi(x), h \rangle_{H(\gamma)}
= \lim_{t \to 0} \frac{\varphi(x+th) - \varphi(x)}{t}
=: \partial_h \varphi(x),
\quad \forall\, h \in H(\gamma),
\]
and the definition of the gradient does not depend on the representation~\eqref{eq-test}.

Similarly, the second derivative $D^2_{H(\gamma)} \varphi(x)$ along $H(\gamma)$ is defined by
\begin{equation}\label{eq-2-deriv}
D^2_{H(\gamma)} \varphi(x)
= \sum_{j,k=1}^n (\partial_{j,k} \widetilde{\varphi})\bigl(\ell_1(x), \ldots, \ell_n(x)\bigr)\,
h_{\ell_j} \otimes h_{\ell_k},
\end{equation}
where $h_{\ell_j} \otimes h_{\ell_k} \in \mathcal{L}(H(\gamma))$ is the rank-one operator given by
\[
(h_{\ell_j} \otimes h_{\ell_k})h
= \langle h, h_{\ell_k} \rangle_{H(\gamma)}\, h_{\ell_j}.
\]
Equivalently, $D^2_{H(\gamma)} \varphi(x)$ is the operator on $H(\gamma)$ satisfying
\[
\langle D^2_{H(\gamma)} \varphi(x)\, h, q \rangle_{H(\gamma)}
= \partial_q \partial_h \varphi(x),
\quad \forall\, h, q \in H(\gamma).
\]	
	
The Sobolev space $W^{q,p}(\gamma)$, $q \in \{1,2\}$, is defined (see~\cite[\S 5.2]{Gaus}) as the closure of the space $\mathcal{FC}^\infty$ with respect to the norm
\[
\|\varphi\|_{W^{q,p}(\gamma)}
:= \|\varphi\|_{L^p(\gamma)} + \sum_{k=1}^q \|D^k_{H(\gamma)} \varphi\|_{L^p(\gamma)}.
\]
Here,
\[
\|D^1_{H(\gamma)} \varphi\|_{L^p(\gamma)}
= \|\nabla \varphi\|_{L^p(\gamma)}
:= \bigl\|\, \|\nabla \varphi\|_{H(\gamma)} \bigr\|_{L^p(\gamma)},
\]
and
\[
\|D^2_{H(\gamma)} \varphi\|_{L^p(\gamma)}
= \bigl\|\, \|D^2_{H(\gamma)} \varphi\|_{\mathrm{HS}} \bigr\|_{L^p(\gamma)},
\]
where $\|\cdot\|_{\mathrm{HS}}$ denotes the Hilbert--Schmidt norm of a linear operator on $H(\gamma)$.

In particular, for every $f \in W^{1,p}(\gamma)$, the Sobolev gradient $\nabla f \colon E \to H(\gamma)$ is well defined as the limit (in $L^p(\gamma; H(\gamma))$) of the gradients of functions in $\mathcal{FC}^\infty$ approximating $f$ in the Sobolev norm. Similarly, for $f \in W^{2,p}(\gamma)$, the second derivative $D^2_{H(\gamma)} f$ is well defined.

\subsection{The Ornstein--Uhlenbeck operator}
Let $L$ be the Ornstein--Uhlenbeck operator associated with the Gaussian
measure $\gamma$, that is, the generator of the Ornstein--Uhlenbeck semigroup
on $L^2(\gamma)$ (see~\cite[\S 1.4, \S 5.3]{Gaus}). For $p>1$, the operator
$L$ is well defined on $W^{2,p}(\gamma)$. In particular
(see~\cite[Theorem~5.7.1]{Gaus}),
\begin{equation}\label{eq3.1}
	\|Lf\|_{L^p(\gamma)} \le c(p)\|f\|_{W^{2,p}(\gamma)},
	\quad \forall\, f \in W^{2,p}(\gamma).
\end{equation}
We define an equivalent norm on $W^{2,p}(\gamma)$ by
\[
\|f\|_{\dot W^{2,p}(\gamma)}
:=\max\bigl\{\|f\|_{W^{2,p}(\gamma)},\, \|Lf\|_{L^p(\gamma)}\bigr\}.
\]
The operator $L$ is characterized by the following integration by parts formula
(see~\cite[Corollary~5.7.5]{Gaus}):
\begin{equation}\label{eq-int-by-parts}
\int_E \langle \nabla \varphi, \nabla \psi \rangle_{H(\gamma)}\, d\gamma
=
-\int_E \varphi\, L\psi\, d\gamma
=
-\int_E \psi\, L\varphi\, d\gamma
\quad \forall\, \varphi,\psi \in \mathcal{FC}^\infty.
\end{equation}

\subsection{The Malliavin matrix}	
For a mapping $f = (f_1,\ldots,f_k)\colon E \to \mathbb{R}^k$ with $f_j\in W^{1,1}(\gamma)$, the Malliavin matrix $M_f$ is defined by
\[
(M_f)_{i,j}(x) := \langle \nabla f_i(x), \nabla f_j(x) \rangle_{H(\gamma)},
\quad i,j = 1,\ldots,k.
\]
Let $A_f:=\operatorname{adj}(M_f)$ denote the adjugate matrix of $M_f$, i.e.,
\[
(A_f)_{i,j} = (M_f)^{j,i},
\]
where $(M_f)^{j,i}$ are the cofactors of $M_f$.
Let $\Delta_f:=\det M_f$. On the set $\{\Delta_f>0\}$, we have
\begin{equation}\label{inver}
\Delta_f\cdot M_f^{-1}=A_f.
\end{equation}
In the case $k=1$, one has
\[
M_f(x) = \Delta_f = |\nabla f(x)|_{H(\gamma)}^2
\quad\text{and}\quad A_f=1.
\]

\subsection{Distances between distributions}

For two $\gamma$-measurable mappings $f,g\colon E\to \mathbb{R}^k$,
we define the total variation distance between their distributions by
\[
d_{\mathrm{TV}}(f,g):=
\sup\Bigl\{
\int_E \bigl(\varphi(f)-\varphi(g)\bigr)\,d\gamma \colon
\varphi\in C_0^\infty(\mathbb{R}^k),\ \|\varphi\|_\infty\le 1
\Bigr\},
\]
where
\[
\|\varphi\|_\infty:=\sup_{x\in\mathbb{R}^k} |\varphi(x)|.
\]
The bounded Kantorovich--Rubinstein distance, also known as the
Fortet--Mourier distance, between these distributions is defined by
\[
d_{\mathrm{KR}}(f,g):=
\sup\Bigl\{
\int_E \bigl(\varphi(f)-\varphi(g)\bigr)\,d\gamma \colon
\varphi\in C_0^\infty(\mathbb{R}^k),\ \|\varphi\|_\infty\le 1,\ \|\nabla\varphi\|_\infty\le 1
\Bigr\}.
\]
We also consider the following Zolotarev-type distances, which generalize the Kantorovich--Rubinstein distance:
\[
d_r(f,g):=
\sup\Bigl\{
\int_E \bigl(\varphi(f)-\varphi(g)\bigr)\,d\gamma \colon
\varphi\in C_0^\infty(\mathbb{R}^k),\ \|\varphi\|_\infty\le 1,\
\max_{1\le j\le r}\max_{|\alpha|=j}\|\partial^\alpha \varphi\|_\infty\le 1
\Bigr\}.
\]
An important property of the metrics $d_r$ is that they satisfy a generalized relation of the form \eqref{eq:intro-TV-est}. Namely, by \cite[Lemma 2.3]{Kos23},
\begin{equation}\label{eq-TV-bound}
d_{\rm TV}(f,g)
\le
4\sqrt{k}\,\max\{\sigma_f(t),\sigma_g(t)\}
+
C_r(k)t^{-r}d_r(f,g),\quad \forall t\in(0,1],
\end{equation}
where $C_r(k)$ is a constant depending only on $r$ and $k$.

\subsection{Auxiliary functions}

For a nonnegative $\gamma$-measurable function $g\colon E\to \mathbb{R}$ and parameters $\varepsilon,r>0$, define
\[
h_{\gamma,r}(g,\varepsilon):=\int_{\varepsilon}^{\infty} s^{-r-1}\,\gamma(g\le s)\,ds.
\]
We note that
\begin{align}\label{eq1.1}
\int_E g^{-r}{\bf 1}_{\{g\ge \varepsilon\}}\,d\gamma
&=
r\int_E {\bf 1}_{\{g\ge \varepsilon\}} \int_{\varepsilon}^{\infty} {\bf 1}_{\{s\ge g\}} s^{-r-1}\,ds\,d\gamma
\\
&=
r\int_{\varepsilon}^{\infty} s^{-r-1}\gamma(\varepsilon\le g\le s)\,ds
\le r\,h_{\gamma,r}(g,\varepsilon).\nonumber
\end{align}

Fix a function $\Phi\in C^\infty(\mathbb{R})$ such that
\begin{equation}\label{eq-Phi}
\Phi(t)=0 \quad \text{for } t\in[-1,1], \quad
\Phi(t)=1 \quad \text{for } t\notin[-2,2], \quad
0\le \Phi(t)\le 1 \quad \text{for all } t\in\mathbb{R}.
\end{equation}
For $\varepsilon>0$, define $\Phi_\varepsilon(t):=\Phi(t/\varepsilon)$ for all $t\in\mathbb{R}$.

\begin{remark}\label{rem-Phi}
We can always choose such a function $\Phi$ so that $\|\Phi'\|_\infty\le 2$.
Indeed, consider a function $\eta\in C_0^\infty(\mathbb{R})$ such that
\[
\eta(t)=0 \ \text{for } t\in \mathbb{R}\setminus\bigl[-\tfrac12,\tfrac12\bigr],\quad
\eta(t)=1 \ \text{for } t\in \bigl[-\tfrac14,\tfrac14\bigr],\quad
0\le \eta(t)\le 1 \ \text{for all } t\in\mathbb{R}.
\]
Then
\[
I:=\int_{\mathbb{R}}\eta(t)\,dt\ge \int_{-\frac14}^{\frac14}\eta(t)\,dt=\frac12.
\]
Let $\eta^*:=I^{-1}\eta$ and set
\[
\Phi(t)=1-\int_{-\infty}^t \bigl(\eta^*(s+\tfrac32)-\eta^*(s-\tfrac32)\bigr)\,ds.
\]
Then $\Phi\in C^\infty(\mathbb{R})$ and satisfies
\[
\Phi(t)=0 \ \text{for } t\in[-1,1],\quad
\Phi(t)=1 \ \text{for } t\notin[-2,2],\quad
0\le \Phi(t)\le 1 \ \text{for all } t\in\mathbb{R}.
\]
Moreover,
\[
|\Phi'(t)|=
|\eta^*(t+\tfrac32)-\eta^*(t-\tfrac32)|
\]
and, since the supports of these two functions are disjoint,
\[
|\Phi'(t)|\le \|\eta^*\|_\infty = I^{-1}\|\eta\|_\infty \le 2.
\]
Thus,
\[
\|\Phi'\|_\infty\le 2.
\]
\end{remark}

\section{Key lemmas}
\label{sec:key-lemmas}

\begin{lemma}\label{lem1.1}
Let $J \colon \mathbb{R}^n \to \mathbb{R}^k$ be a linear mapping. 
Let $M = J J^*$, where $J^*$ denotes the adjoint operator, and let $A = \operatorname{adj}(M)$ be the adjugate matrix of $M$, i.e.
\[
MA = AM = (\det M)\, I.
\]
Then
\[
\|AJ\|_{\mathrm{op}} \le 2k^{\frac{1}{2}-\frac{k}{2}}(\det M)^{1/2}\,
(\operatorname{tr}M)^{\frac{k-1}{2}}
\]
and
\[
\|AJ\|_{\mathrm{HS}}\le
k^{1-\frac{k}{2}}(\det M)^{1/2}\,
(\operatorname{tr}M)^{\frac{k-1}{2}}.
\]
\end{lemma}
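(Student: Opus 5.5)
The plan is to diagonalize everything with a singular value decomposition of $J$ and reduce both inequalities to elementary symmetric-function inequalities for the squared singular values. Write $J=U\Sigma V^*$, where $U$ is an orthogonal $k\times k$ matrix, $V$ is an orthogonal $n\times n$ matrix, and $\Sigma$ is the $k\times n$ ``diagonal'' matrix with entries $s_1,\dots,s_k\ge 0$. If $n<k$, we pad with zeros; then $\det M=0$ and the formulas below remain valid. Then $M=U\Sigma\Sigma^*U^*=U\,\mathrm{diag}(s_1^2,\dots,s_k^2)\,U^*$. Since $\operatorname{adj}(U D U^*)=U\operatorname{adj}(D)U^*$ for orthogonal $U$, we get
\[
A=U\,\mathrm{diag}\Bigl(\prod_{j\ne i}s_j^2\Bigr)U^*.
\]
Hence $AJ=U\,\mathrm{diag}\bigl(s_i\prod_{j\ne i}s_j^2\bigr)\Sigma'V^*$, in the appropriate rectangular sense. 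So the singular values of $AJ$ are
\[
s_i\prod_{j\ne i}s_j^2=(\det M)^{1/2}\prod_{j\ne i}s_j,\qquad i=1,\dots,k,
\]
using $\det M=\prod_j s_j^2$. Also $\operatorname{tr}M=\sum_j s_j^2$. The case $k=1$ is immediate, since $A=1$ and $\|J\|=(\det M)^{1/2}$, so assume $k\ge2$.

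For the Hilbert--Schmidt bound, sum the squares of these singular values:
\[
\|AJ\|_{\mathrm{HS}}^2=\det M\cdot e_{k-1}(s_1^2,\dots,s_k^2),
\]
where $e_{k-1}$ is the elementary symmetric polynomial. Maclaurin's inequality gives $e_{k-1}/\binom{k}{k-1}\le (e_1/k)^{k-1}$, that is,
\[
e_{k-1}\le k\,(\operatorname{tr}M/k)^{k-1}=k^{2-k}(\operatorname{tr}M)^{k-1}.
\]
Taking square roots gives exactly $k^{1-k/2}(\det M)^{1/2}(\operatorname{tr}M)^{(k-1)/2}$.

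For the operator norm, bound the largest singular value. By AM--GM,
\[
\prod_{j\ne i}s_j\le\Bigl(\frac{\sum_{j\ne i}s_j^2}{k-1}\Bigr)^{(k-1)/2}\le\Bigl(\frac{\operatorname{tr}M}{k-1}\Bigr)^{(k-1)/2}.
\]
It then remains to check that $(k-1)^{-(k-1)/2}\le 2k^{-(k-1)/2}$, i.e.
\[
\Bigl(\frac{k}{k-1}\Bigr)^{(k-1)/2}=\Bigl(1+\frac1{k-1}\Bigr)^{(k-1)/2}\le e^{1/2}<2 .
\]

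The only mildly delicate step is bookkeeping: the adjugate must commute with the orthogonal change of basis, and the degenerate cases $n<k$ or some $s_i=0$ must be handled. Both follow because $\operatorname{adj}$ is a polynomial map satisfying $\operatorname{adj}(PQ)=\operatorname{adj}(Q)\operatorname{adj}(P)$ and $\operatorname{adj}(U^*)=\operatorname{adj}(U)^{*}=U^{*}$ up to the factor $\det U=\pm1$, and these factors cancel in $U\cdot U^*$.
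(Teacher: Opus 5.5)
Your proof is correct and is essentially the paper's proof. The paper skips the SVD and uses $(AJ)(AJ)^*=AMA=(\det M)\,A$ to get $\|AJ\|_{\mathrm{op}}^2=(\det M)\|A\|_{\mathrm{op}}$ and $\|AJ\|_{\mathrm{HS}}^2=(\det M)\operatorname{tr}A$, with the eigenvalues of $A$ equal to $\prod_{r\ne i}\lambda_r$; it then applies the same AM--GM bound (via $(k/(k-1))^{k-1}\le e$) and Maclaurin's inequality. One small slip in your bookkeeping remark: for orthogonal $U$, $\operatorname{adj}(U)=(\det U)\,U^*$, so $\operatorname{adj}(U^*)=\operatorname{adj}(U)^*=(\det U)\,U$ rather than $\pm U^*$; this is what actually yields your (correct) conclusion $\operatorname{adj}(UDU^*)=U\operatorname{adj}(D)U^*$.
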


\begin{proof}
Since $M = JJ^*$ is symmetric, its adjugate $A$ is also symmetric, hence $A^* = A$. Therefore,
\[
\|AJ\|_{\mathrm{op}} = \|J^* A\|_{\mathrm{op}}.
\]
For any $h \in \mathbb{R}^k$, we compute
\[
|J^* A h|^2
= \langle J^* A h, J^* A h \rangle_{\mathbb{R}^n}
= \langle J J^* A h, A h \rangle_{\mathbb{R}^k}
= \langle M A h, A h \rangle_{\mathbb{R}^k}.
\]
Using $MA = (\det M) I$, we obtain
\[
|J^* A h|^2
= (\det M)\, \langle h, A h \rangle_{\mathbb{R}^k}.
\]
Taking the supremum over $|h|=1$, we get
\[
\|J^* A\|_{\mathrm{op}}^2 = (\det M)\, \|A\|_{\mathrm{op}}.
\]
	
For the Hilbert--Schmidt norm, we have
\[
\|AJ\|_{\rm HS}^2
=
\operatorname{tr}\bigl((AJ)(AJ)^*\bigr)
=
\operatorname{tr}(AJJ^*A^*)
=
\operatorname{tr}(AMA^*).
\]
Since $A^*=A$ and $MA=(\det M)I$, it follows that
\[
\|AJ\|_{\rm HS}^2
=
\operatorname{tr}(AMA)
=
(\det M)\operatorname{tr}A.
\]

Now, if $\lambda_1,\ldots,\lambda_k$ are the eigenvalues of $M$, then the eigenvalues of $A$ are
\[
\prod_{r\ne 1}\lambda_r,\ldots,\prod_{r\ne k}\lambda_r.
\]
Therefore,
\begin{equation}\label{eq-op-norm-est}
\|A\|_{\mathrm{op}} \le \Bigl(\frac{\operatorname{tr}M}{k-1}\Bigr)^{k-1}
\le e k^{1-k}(\operatorname{tr}M)^{k-1}.
\end{equation}
Moreover,
by Maclaurin's inequality,
\[
\operatorname{tr}A\le k^{2-k}(\operatorname{tr}M)^{k-1}.
\]
Taking square roots completes the proof.
\end{proof}

\begin{lemma}\label{lem1.2}
Let
$
f=(f_1,\ldots,f_k)\colon \mathbb R^n\to\mathbb R^k
$
be a mapping with $f_j\in C^\infty(\mathbb R^n)$ for $j=1,\ldots,k$.
Let
\[
J_f(x)=\bigl(\partial_i f_j(x)\bigr)_{\substack{1\le i\le n\\1\le j\le k}}
\]
be the Jacobian matrix of $f$, viewed as a $k\times n$ matrix, and let
$M_f=J_fJ_f^*$
be the corresponding Gram (Malliavin) matrix. 
Then, for $\Delta_f:=\det M_f$ and $A_f:=\operatorname{adj}(M_f)$, the following pointwise estimates hold:
\[
|\nabla \Delta_f|
\le
2k^{1-\frac{k}{2}}\,
\Delta_f^{1/2}
\Bigl(\sum_{j=1}^k |\nabla f_j|^2\Bigr)^{\frac{k-1}{2}}
\Bigl(\sum_{m=1}^k \|D^2 f_m\|_{\rm HS}^2\Bigr)^{1/2},
\]
and
\[
\sup_{|\theta|=1}
\Bigl|
\sum_{i,j=1}^k \theta_i
\langle \nabla f_j,\nabla (A_f)_{i,j}\rangle
\Bigr|
\le
5k^{\frac32-k}
\Bigl(\sum_{j=1}^k |\nabla f_j|^2\Bigr)^{k-1}
\Bigl(\sum_{m=1}^k \|D^2 f_m\|_{\rm HS}^2\Bigr)^{1/2}.
\]	
\end{lemma}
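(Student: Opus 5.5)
The plan is to reduce both estimates to pointwise linear algebra for $J:=J_f(x)$, $M:=M_f(x)=JJ^*$, $A:=A_f(x)$, and then apply Lemma~\ref{lem1.1}, noting that $\operatorname{tr}M=\sum_j|\nabla f_j|^2$. For $h\in\mathbb R^n$ write $\partial_h J$ for the matrix $(\partial_h\partial_i f_j)$; its rows are the vectors $D^2f_j\,h$.

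\emph{First estimate.} By Jacobi's formula, $\partial_h\Delta_f=\operatorname{tr}(A\,\partial_hM)$ with $\partial_hM=(\partial_hJ)J^*+J(\partial_hJ)^*$. Since $A$ is symmetric, this gives
\[
\partial_h\Delta_f=2\langle AJ,\partial_hJ\rangle_{\rm HS}=2\sum_{j,i}(AJ)_{j i}\,\langle D^2f_j\,e_i,h\rangle .
\]
Hence $\nabla\Delta_f=2\sum_{j,i}(AJ)_{ji}\,D^2f_j e_i$. By Cauchy--Schwarz,
\[
|\nabla\Delta_f|\le 2\|AJ\|_{\rm HS}\Bigl(\sum_{j,i}|D^2f_je_i|^2\Bigr)^{1/2}=2\|AJ\|_{\rm HS}\Bigl(\sum_m\|D^2f_m\|_{\rm HS}^2\Bigr)^{1/2}.
\]
The Hilbert--Schmidt bound of Lemma~\ref{lem1.1} then gives the claim with constant $2k^{1-k/2}$.

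\emph{Second estimate.} Here I would first derive a polynomial formula for the derivative of the adjugate. On $\{\Delta_f>0\}$ we have $A=\Delta M^{-1}$, so
\[
\partial_hA=\Delta^{-1}\bigl(\operatorname{tr}(A\,\partial_hM)\,A-A\,(\partial_hM)\,A\bigr).
\]
Both sides, multiplied by $\Delta$, are polynomial in $J$ and $\partial_hJ$. Hence $\Delta\,\partial_hA=\operatorname{tr}(A\partial_hM)A-A(\partial_hM)A$ holds identically. I will work where $\Delta>0$ and treat $\{\Delta=0\}$ by approximation: perturb $f$ by adding $\delta\,\ell_j(x)$ to $f_j$ for extra independent coordinates so that $\Delta>0$, then let $\delta\to0$. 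Both sides of the target inequality are continuous in $J$ and $D^2f$, so the bound passes to the limit. Taking $h=\nabla f_j=J^*e_j$ and summing against $\theta_i$ and over $j$, the quantity becomes $\Delta^{-1}(T_1-T_2)$, with
\[
T_1=\sum_j 2\langle AJ,\partial_{J^*e_j}J\rangle_{\rm HS}\,(A\theta)_j=2\langle AJ,\partial_{J^*A\theta}J\rangle_{\rm HS},
\]
using linearity of $h\mapsto\partial_hJ$. Since $|J^*A\theta|\le\|AJ\|_{\rm op}$, we get
\[
|T_1|\le 2\|AJ\|_{\rm HS}\|AJ\|_{\rm op}\Bigl(\sum_m\|D^2f_m\|_{\rm HS}^2\Bigr)^{1/2}.
\]
Both factors from Lemma~\ref{lem1.1} carry a $\Delta^{1/2}$, so $\Delta^{-1}$ cancels exactly. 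What remains is $(\operatorname{tr}M)^{k-1}$ times the constant $4k^{3/2-k}$.

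\emph{The main obstacle} is $T_2=\sum_j\theta^*A(\partial_{J^*e_j}M)Ae_j$, which must also be shown to be $O(\Delta)$ with the right constant. Expanding $\partial M$ gives two pieces. The first is
\[
\sum_j\theta^*A(\partial_{J^*e_j}J)J^*Ae_j=\sum_m(A\theta)_m\operatorname{tr}\bigl(D^2f_m\,J^*AJ\bigr)=\operatorname{tr}\Bigl(D^2f_{A\theta}\,J^*AJ\Bigr),\qquad f_{A\theta}:=\sum_m(A\theta)_mf_m .
\]
The key observation is that $P:=J^*AJ$ is symmetric with $P^2=J^*AMAJ=\Delta P$, so $\Delta^{-1}P$ is an orthogonal projection of rank at most $k$. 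This yields $\|P\|_{\rm HS}\le\sqrt k\,\Delta$, while $\|D^2f_{A\theta}\|_{\rm HS}\le|A\theta|\,(\sum_m\|D^2f_m\|^2_{\rm HS})^{1/2}$ and $|A\theta|\le\|A\|_{\rm op}$ is controlled by \eqref{eq-op-norm-est}. The second piece, $\sum_j\theta^*AJ(\partial_{J^*e_j}J)^*Ae_j$, I would rewrite as a trace $\sum_m\langle D^2f_m\,(J^*A\theta),J^*Ae_m\rangle$ and bound via $\|AJ\|_{\rm op}\|AJ\|_{\rm HS}$ in the same way as $T_1$. Collecting constants should give at most $5k^{3/2-k}$. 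That bookkeeping, and checking that the $\sqrt k$ and $e k^{1-k}$ factors fit, is where I expect care is needed; if the constant comes out slightly off I would re-balance using the sharper $\|A\|_{\rm op}\le(\operatorname{tr}M/(k-1))^{k-1}$.
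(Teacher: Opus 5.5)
Your first estimate is exactly the paper's argument. For the second you use the same ingredients as the paper: the identity $\Delta\,\partial_hA=(\partial_h\Delta)A-A(\partial_hM)A$, the same three resulting terms, the same bound $\|J^*AJ\|_{\rm HS}=\sqrt k\,\Delta$ for the trace piece, and the same perturbation $f\mapsto f+\delta y$ to handle $\{\Delta_f=0\}$.

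The gap is in the constant, which the lemma states explicitly. Suppose you bound $T_1$, the trace piece $\operatorname{tr}(D^2f_{A\theta}\,J^*AJ)$, and the remaining piece $\sum_m\langle D^2f_m(J^*A\theta),J^*Ae_m\rangle$ separately, as you propose. Then Lemma~\ref{lem1.1} and \eqref{eq-op-norm-est} give $4$, $e$ and $2$, in units of $k^{3/2-k}\Delta_f(\operatorname{tr}M)^{k-1}(\sum_m\|D^2f_m\|_{\rm HS}^2)^{1/2}$. That totals $6+e>5$. The re-balancing you mention does not close this. With $\|A\|_{\rm op}\le(\operatorname{tr}M/(k-1))^{k-1}$, the three bounds become $2c$, $c^2$ and $c$, where $c=(k/(k-1))^{(k-1)/2}\ge\sqrt2$. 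Their sum $3c+c^2\ge 3\sqrt2+2$ is still above $6$.

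What is missing is an exact cancellation: the last piece of $T_2$ equals $\frac12T_1$. Your own first-part formula gives $\nabla\Delta_f=2\sum_m D^2f_m\,J^*Ae_m$. By symmetry of $D^2f_m$, this yields
\[
T_1=\langle\nabla\Delta_f,J^*A\theta\rangle=2\sum_{m=1}^k\bigl\langle D^2f_m(J^*A\theta),J^*Ae_m\bigr\rangle .
\]
This is precisely the paper's identity $\sum_r(\partial_rJ_f)^*A_f\,\partial_rf=\frac12\nabla\Delta_f$. Consequently
\[
\Delta_f\sum_{i,j=1}^k\theta_i\langle\nabla f_j,\nabla(A_f)_{i,j}\rangle=\tfrac12T_1-\operatorname{tr}\bigl(D^2f_{A\theta}\,J^*AJ\bigr),
\]
so only half of $T_1$ survives. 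Your bounds then give $\frac12\cdot4+e\le5$ in the same units, which is the stated constant. With this observation inserted, the rest of your argument coincides with the paper's proof.
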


\begin{proof}
Since $M_f$ is symmetric and nonnegative definite, so is $A_f$, and
\begin{equation}\label{eq-adj-def}
A_f M_f = M_f A_f = \Delta_f I.
\end{equation}
Moreover,
\begin{equation}\label{eq-matr-der}
\partial_i M_f
=
\partial_i(J_fJ_f^*)
=
(\partial_i J_f)J_f^* + J_f(\partial_i J_f)^*.
\end{equation}
Hence
\begin{align}\label{eq-det-derivative}
\partial_i \Delta_f
=
\partial_i (\det M_f)
&=
\sum_{r,s=1}^k \frac{\partial (\det M_f)}{\partial (M_f)_{rs}}\,\partial_i (M_f)_{rs}
=
\sum_{r,s=1}^k (A_f)_{sr}\, (\partial_i M_f)_{rs}
\\
&=
\operatorname{tr}(A_f\,\partial_i M_f)
=
\operatorname{tr}\bigl(A_f(\partial_i J_f)J_f^*\bigr)
+
\operatorname{tr}\bigl(A_fJ_f(\partial_i J_f)^*\bigr).
\nonumber
\end{align}
Using cyclicity of the trace and the symmetry of $A_f$, both terms are equal:
\[
\operatorname{tr}\bigl(A_f(\partial_i J_f)J_f^*\bigr)
=
\operatorname{tr}\bigl(J_f^*A_f\,\partial_i J_f\bigr)
=
\operatorname{tr}\bigl((A_fJ_f)^*\partial_i J_f\bigr),
\]
and similarly
\[
\operatorname{tr}\bigl(A_fJ_f(\partial_i J_f)^*\bigr)
=
\operatorname{tr}\bigl((A_fJ_f(\partial_i J_f)^*)^*\bigr)
=
\operatorname{tr}\bigl((A_fJ_f)^*\partial_i J_f\bigr).
\]
Therefore,
\begin{equation}\label{eq-Delta-deriv}
\partial_i \Delta_f
=
2\,\operatorname{tr}\bigl((A_fJ_f)^*\partial_i J_f\bigr),
\end{equation}
and hence
\[
|\partial_i \Delta_f|
\le
2\,\|A_fJ_f\|_{\rm HS}\,\|\partial_i J_f\|_{\rm HS}.
\]
Summing over $i=1,\ldots,n$, we obtain
\[
|\nabla \Delta_f|^2
\le
4\,\|A_fJ_f\|_{\rm HS}^2
\sum_{i=1}^n \|\partial_i J_f\|_{\rm HS}^2.
\]
Now
\begin{equation}\label{eq-2der-norm-equality}
\sum_{i=1}^n \|\partial_i J_f\|_{\rm HS}^2
=
\sum_{r=1}^k \|D^2 f_r\|_{\rm HS}^2,
\end{equation}
and, by Lemma~\ref{lem1.1},
\begin{equation}\label{eq-det-grad-est}
|\nabla \Delta_f|^2
\le
4k^{2-k}(\det M_f)(\operatorname{tr}M_f)^{k-1}
\sum_{r=1}^k \|D^2 f_r\|_{\rm HS}^2.
\end{equation}

To obtain the second estimate, for a fixed $\theta\in\mathbb{R}^k$ with $|\theta|=1$, we note that
\[
\sum_{i,j=1}^k \theta_i
\langle \nabla f_j,\nabla (A_f)_{i,j}\rangle
=
\sum_{r=1}^n\sum_{j=1}^k \partial_r f_j
\sum_{i=1}^k \partial_r(A_f)_{i,j}\theta_i
=
\sum_{r=1}^n \langle \partial_r f,\partial_r A_f\theta\rangle.
\]
Now, by~\eqref{eq-adj-def}, for each $r=1,\ldots,n$, we have
\[
(\partial_r A_f) M_f + A_f(\partial_r M_f) = (\partial_r \Delta_f)\,I,
\]
and, multiplying from the right by $A_f$ and using~\eqref{eq-matr-der}, we obtain
\[
\Delta_f\,\partial_r A_f
=
(\partial_r\Delta_f)A_f - A_f(\partial_r M_f)A_f
=
(\partial_r\Delta_f)A_f
-
A_f(\partial_r J_f)J_f^*A_f
-
A_fJ_f(\partial_r J_f)^*A_f.
\]
Therefore,
\[
\Delta_f\sum_{r=1}^n\langle \partial_r f,\partial_r A_f\theta\rangle
=
\langle \nabla\Delta_f, J_f^*A_f\theta\rangle
-
\sum_{r=1}^n\langle (\partial_r J_f)^*A_f\,\partial_r f, J_f^*A_f\theta\rangle
-
\sum_{r=1}^n\langle J_f^*A_f\,\partial_r f, (\partial_r J_f)^*A_f\theta\rangle.
\]
We claim that
\[
\sum_{r=1}^n(\partial_r J_f)^*A_f\,\partial_r f
=
\frac12\,\nabla\Delta_f.
\]
Indeed, the $i$-th component of the left-hand side is
\[
\sum_{r=1}^n \langle \partial_r\partial_i f,\, A_f \partial_r f\rangle
=
\sum_{r=1}^n \langle \partial_i\partial_r f,\, A_f \partial_r f\rangle
=
\frac12\,\partial_i \Delta_f,
\]
by~\eqref{eq-Delta-deriv}. Hence
\[
\Delta_f\sum_{r=1}^n\langle \partial_r f,\partial_r A_f\theta\rangle
=
\frac{1}{2}\langle \nabla\Delta_f, J_f^*A_f\theta\rangle
-
\sum_{r=1}^n\langle J_f^*A_f\,\partial_r f, (\partial_r J_f)^*A_f\theta\rangle.
\]

For the first term, by Cauchy--Schwarz, Lemma~\ref{lem1.1}, and estimate~\eqref{eq-det-grad-est},
\[
|\langle \nabla\Delta_f, J_f^*A_f\theta\rangle|
\le
\|A_fJ_f\|_{\rm op}\,|\nabla\Delta_f|
\le
4k^{\frac32-k}\Delta_f
\Bigl(\sum_{j=1}^k |\nabla f_j|^2\Bigr)^{k-1}
\Bigl(\sum_{r=1}^k \|D^2f_r\|_{\rm HS}^2\Bigr)^{1/2}.
\]

For the second term, using Cauchy--Schwarz again,
\[
\Bigl|
\sum_{r=1}^n
\langle J_f^*A_f\,\partial_r f,(\partial_r J_f)^*A_f\theta\rangle
\Bigr|
\le
\Bigl(\sum_{r=1}^n |J_f^*A_f\,\partial_r f|^2\Bigr)^{1/2}
\Bigl(\sum_{r=1}^n |(\partial_r J_f)^*A_f\theta|^2\Bigr)^{1/2}.
\]
Now
\[
\sum_{r=1}^n |J_f^*A_f\,\partial_r f|^2
=\|J_f^*A_fJ_f\|_{\rm HS}^2
=
\Delta_f\,\operatorname{tr}(J_f^*A_fJ_f)
=
\Delta_f\,\operatorname{tr}(J_fJ_f^*A_f)
=
k\,\Delta_f^2,
\]
and, by~\eqref{eq-2der-norm-equality} and \eqref{eq-op-norm-est},
\[
\Bigl(\sum_{r=1}^n |(\partial_r J_f)^*A_f\theta|^2\Bigr)^{1/2}
\le
\|A_f\|_{\rm op}
\Bigl(\sum_{m=1}^k \|D^2f_m\|_{\rm HS}^2\Bigr)^{1/2}
\le
ek^{1-k}\Bigl(\sum_{j=1}^k |\nabla f_j|^2\Bigr)^{k-1}
\Bigl(\sum_{r=1}^k \|D^2f_r\|_{\rm HS}^2\Bigr)^{1/2}.
\]
Therefore,
\[
\Bigl|
\sum_{r=1}^n
\langle J_f^*A_f\,\partial_r f,(\partial_r J_f)^*A_f\theta\rangle
\Bigr|
\le
3k^{\frac32-k}\Delta_f
\Bigl(\sum_{j=1}^k |\nabla f_j|^2\Bigr)^{k-1}
\Bigl(\sum_{r=1}^k \|D^2f_r\|_{\rm HS}^2\Bigr)^{1/2}.
\]

Combining the two bounds and dividing by $\Delta_f$ on the set
$\{\Delta_f>0\}$, we arrive at
\[
\Bigl|
\sum_{i,j=1}^k \theta_i
\langle \nabla f_j,\nabla (A_f)_{i,j}\rangle
\Bigr|
\le
5k^{\frac32-k}
\Bigl(\sum_{j=1}^k |\nabla f_j|^2\Bigr)^{k-1}
\Bigl(\sum_{m=1}^k \|D^2f_m\|_{\rm HS}^2\Bigr)^{1/2},
\]
which implies the announced second estimate on the set $\{\Delta_f>0\}$.

It remains to remove the restriction $\Delta_f>0$. For $\varepsilon>0$, define
\[
f_\varepsilon(x,y):=f(x)+\varepsilon y,
\quad (x,y)\in\mathbb R^n\times\mathbb R^k .
\]
Then
\[
M_{f_\varepsilon}(x,y)=M_f(x)+\varepsilon^2 I,
\]
and hence $\det M_{f_\varepsilon}(x,y)>0$ for every
$(x,y)\in \mathbb R^n\times \mathbb R^k$. 
Moreover, $A_{f_\varepsilon}$ does not depend on $y$, and
\[
\nabla_x (A_{f_\varepsilon})_{i,j}\to \nabla_x (A_f)_{i,j}
\]
as $\varepsilon\downarrow0$. 
Applying the estimate already proved to $f_\varepsilon$ and letting
$\varepsilon\downarrow0$ gives the announced estimate for the mapping $f$ at
an arbitrary point $x$.
\end{proof}

\begin{lemma}\label{lem0}
Let $g_1,\ldots,g_k,h_1,\ldots,h_k\in L^p(\gamma)$, where $p>2k-1$. Then
\[
\Bigl\|
\Bigl(\sum_{j=1}^k |g_j|^2\Bigr)^{k-1}
\Bigl(\sum_{j=1}^k |h_j|^2\Bigr)^{1/2}
\Bigr\|_{L^{\frac{p}{2k-1}}(\gamma)}
\le
k^{k-\frac12}
\Bigl(
\max_{1\le j\le k}\max\{\|g_j\|_{L^p(\gamma)},\|h_j\|_{L^p(\gamma)}\}
\Bigr)^{2k-1}.
\]
\end{lemma}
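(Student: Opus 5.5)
The plan is to reduce everything to a generalized Hölder inequality and then apply Minkowski's inequality to the sums of squares. Set $G:=(\sum_{j=1}^k |g_j|^2)^{1/2}$ and $H:=(\sum_{j=1}^k|h_j|^2)^{1/2}$. The quantity to estimate is then $\|G^{2k-2}H\|_{L^{p/(2k-1)}(\gamma)}$, and the exponent $p/(2k-1)$ is at least $1$ because $p>2k-1$, so this is an honest $L^q$ norm.

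\textbf{Step 1: Hölder.} Apply Hölder's inequality with exponents $\frac{2k-1}{2k-2}$ and $2k-1$ to the functions $G^{(2k-2)p/(2k-1)}$ and $H^{p/(2k-1)}$. This gives
\[
\int_E G^{\frac{(2k-2)p}{2k-1}}H^{\frac{p}{2k-1}}\,d\gamma
\le \Bigl(\int_E G^p\,d\gamma\Bigr)^{\frac{2k-2}{2k-1}}\Bigl(\int_E H^p\,d\gamma\Bigr)^{\frac{1}{2k-1}},
\]
that is,
\[
\|G^{2k-2}H\|_{L^{p/(2k-1)}}\le \|G\|_{L^p}^{2k-2}\|H\|_{L^p}.
\]
For $k=1$ the factor involving $G$ is absent, and the claim reads $\|H\|_{L^p}\le k^{1/2}\max_j\|h_j\|_{L^p}$, which Step 2 covers.

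\textbf{Step 2: bound $\|G\|_{L^p}$.} Since $p\ge 2$ (as $p>1$ when $k=1$, handle that case via $|h_1|=H$ directly), $\|G\|_{L^p}=\|\sum_j|g_j|^2\|_{L^{p/2}}^{1/2}$. Minkowski's inequality in $L^{p/2}$ then gives
\[
\|G\|_{L^p}\le\Bigl(\sum_j\|g_j\|_{L^p}^2\Bigr)^{1/2}\le k^{1/2}\max_j\|g_j\|_{L^p}.
\]
If $p<2$, which can only happen when $k=1$, the bound is trivial since $G=|g_1|$. The same argument applies to $H$.

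\textbf{Step 3: combine.} Putting the pieces together yields the bound $k^{(2k-2)/2}k^{1/2}m^{2k-1}=k^{k-\frac12}m^{2k-1}$, where $m$ is the stated maximum of the $L^p$ norms. This matches the claimed constant exactly.

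There is no real obstacle here. The only point that needs care is checking that the Hölder exponents are admissible, namely that $p/(2k-1)\ge1$, and treating the degenerate cases $k=1$ and $p<2$ separately.
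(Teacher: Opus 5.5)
Your proposal is correct and matches the paper's proof essentially step for step. You use H\"older's inequality to reduce to $\|G\|_{L^p(\gamma)}^{2k-2}\|H\|_{L^p(\gamma)}$, then Minkowski's inequality in $L^{p/2}(\gamma)$ on the sums of squares, giving $k^{k-1}\cdot k^{1/2}=k^{k-\frac12}$, and you dispose of $k=1$ separately, as the paper does.
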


\begin{proof}
The case $k=1$ is trivial. Assume $k\ge2$.
By H\"older's inequality,
\begin{align*}
\Bigl\|
\Bigl(\sum_{j=1}^k |g_j|^2\Bigr)^{k-1}
\Bigl(\sum_{j=1}^k |h_j|^2\Bigr)^{1/2}
\Bigr\|_{L^{\frac{p}{2k-1}}(\gamma)}
&\le
\Bigl\|\Bigl(\sum_{j=1}^k |g_j|^2\Bigr)^{1/2}\Bigr\|_{L^p(\gamma)}^{2k-2}
\cdot
\Bigl\|\Bigl(\sum_{j=1}^k |h_j|^2\Bigr)^{1/2}\Bigr\|_{L^p(\gamma)}
\\
&=
\Bigl\|\sum_{j=1}^k |g_j|^2\Bigr\|_{L^{p/2}(\gamma)}^{k-1}
\cdot
\Bigl\|\sum_{j=1}^k |h_j|^2\Bigr\|_{L^{p/2}(\gamma)}^{1/2}
\\
&\le
\Bigl(\sum_{j=1}^k \|g_j\|_{L^p(\gamma)}^2\Bigr)^{k-1}
\cdot
\Bigl(\sum_{j=1}^k \|h_j\|_{L^p(\gamma)}^2\Bigr)^{1/2}
\\
&\le
k^{k-\frac12}
\Bigl(
\max_{1\le j\le k}\max\{\|g_j\|_{L^p(\gamma)},\|h_j\|_{L^p(\gamma)}\}
\Bigr)^{2k-1}.
\end{align*}
This proves the claim.
\end{proof}

\begin{lemma}\label{lem:main}
Let $k\in\mathbb N$, let $p>2k-1$, and set
\[
q:=\frac{p}{p-2k+1}.
\]
Let $f=(f_1,\ldots,f_k)$ with $f_j\in W^{2,p}(\gamma)$ and
\[
\max_{1\le j\le k}\|f_j\|_{\dot{W}^{2,p}(\gamma)}\le b.
\]
Let $g\in W^{1,q}(\gamma)\cap L^\infty(\gamma)$, and let
$\theta=(\theta_1,\ldots,\theta_k)\in\mathbb R^k$ satisfy $|\theta|=1$.
Then for every $\varepsilon>0$,
\[
\biggl|
\int_E \sum_{i,j=1}^k \theta_i (A_f)_{i,j}
\langle \nabla f_j,\nabla g\rangle_{H(\gamma)}
\Delta_f^{-1}\Phi_\varepsilon(\Delta_f)\,d\gamma
\biggr|
\le
32\bigl(\|\Phi'\|_\infty+2\bigr)k\|g\|_\infty b^{2k-1}
\bigl(h_{\gamma,q}(\Delta_f,\varepsilon)\bigr)^{1/q}.
\]
\end{lemma}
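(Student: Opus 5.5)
Set $\psi:=\Delta_f^{-1}\Phi_\varepsilon(\Delta_f)$ and $u:=\sum_{i,j}\theta_i (A_f)_{i,j}\psi\,\nabla f_j$. The integral equals $\int_E \langle u,\nabla g\rangle_{H(\gamma)}\,d\gamma$. Integrating by parts (the divergence form of~\eqref{eq-int-by-parts}) moves the derivative off $g$, so the integral becomes $-\int_E g\,\delta(u)\,d\gamma$. Here
\[
\delta(u)=\sum_{i,j}\theta_i\Bigl[(A_f)_{i,j}\psi\,Lf_j+\psi\langle\nabla (A_f)_{i,j},\nabla f_j\rangle+(A_f)_{i,j}\psi'(\Delta_f)\langle\nabla\Delta_f,\nabla f_j\rangle\Bigr],
\]
with $\psi'(s)=-s^{-2}\Phi_\varepsilon(s)+s^{-1}\varepsilon^{-1}\Phi'(s/\varepsilon)$. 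It then suffices to bound $\|g\|_\infty\|\delta(u)\|_{L^1}$. We first prove this for cylindrical smooth $f_j$, i.e.\ in the finite-dimensional setting of Lemma~\ref{lem1.2}. The general case follows by approximating in $W^{2,p}$; $\Phi_\varepsilon$ vanishes near $\{\Delta_f\le\varepsilon\}$, so every integrand is controlled and we can pass to the limit.

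We bound the three terms pointwise and then apply H\"older with exponents $p/(2k-1)$ and $q$.

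\emph{Term with $Lf_j$.} We have $|\sum_{i,j}\theta_i(A_f)_{i,j}Lf_j|\le\|A_f\|_{\rm op}(\sum_j|Lf_j|^2)^{1/2}$. By~\eqref{eq-op-norm-est}, $\|A_f\|_{\rm op}\le ek^{1-k}(\sum_j|\nabla f_j|^2)^{k-1}$.

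\emph{Term with $\nabla A_f$.} This is exactly the quantity controlled by the second estimate of Lemma~\ref{lem1.2}.

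\emph{Term with $\nabla\Delta_f$.} Note that $\sum_{i,j}\theta_i(A_f)_{i,j}\nabla f_j=J_f^*A_f\theta$. Lemma~\ref{lem1.1} gives $\|A_fJ_f\|_{\rm op}\lesssim\Delta_f^{1/2}(\operatorname{tr}M_f)^{(k-1)/2}$, and the first estimate of Lemma~\ref{lem1.2} gives $|\nabla\Delta_f|\lesssim\Delta_f^{1/2}(\operatorname{tr}M_f)^{(k-1)/2}(\sum\|D^2f_m\|^2_{\rm HS})^{1/2}$. Their product is $\lesssim\Delta_f(\sum_j|\nabla f_j|^2)^{k-1}(\sum_m\|D^2f_m\|_{\rm HS}^2)^{1/2}$. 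On the support of $\Phi_\varepsilon(\Delta_f)$ we have $\Delta_f\ge\varepsilon$. On the support of $\Phi'(\Delta_f/\varepsilon)$ we have $\varepsilon\le\Delta_f\le2\varepsilon$, so $|\psi'(\Delta_f)|\le(1+2\|\Phi'\|_\infty)\Delta_f^{-2}$. Hence this term is $\lesssim(\|\Phi'\|_\infty+1)\Delta_f^{-1}\mathbf 1_{\{\Delta_f\ge\varepsilon\}}$ times the same $(2k-1)$-homogeneous product.

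\emph{Combining.} All three terms are therefore bounded by
\[
C(\|\Phi'\|_\infty+2)\,k^{\,\cdot}\,\Delta_f^{-1}\mathbf 1_{\{\Delta_f\ge\varepsilon\}}\Bigl(\sum_j|\nabla f_j|^2\Bigr)^{k-1}\Bigl(\sum_j(|Lf_j|^2+\|D^2f_j\|_{\rm HS}^2)\Bigr)^{1/2}.
\]
H\"older splits the integral into $\|\Delta_f^{-1}\mathbf 1_{\{\Delta_f\ge\varepsilon\}}\|_{L^q}$ times the $L^{p/(2k-1)}$ norm of the product. By~\eqref{eq1.1}, the first factor is at most $(q\,h_{\gamma,q}(\Delta_f,\varepsilon))^{1/q}$. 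Lemma~\ref{lem0} bounds the second factor by $k^{k-1/2}$ times a constant multiple of $b^{2k-1}$, since the $\dot W^{2,p}$ norm controls both $Lf_j$ and $D^2f_j$. The factor $q^{1/q}\le e^{1/e}$ is harmless.

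The main obstacle is constant bookkeeping. The powers of $k$ from Lemmas~\ref{lem1.1}, \ref{lem1.2} and~\ref{lem0} have to be checked to combine, for instance $k^{3/2-k}\cdot k^{k-1/2}=k$, into the stated $32(\|\Phi'\|_\infty+2)k$. A secondary technical point is justifying the integration by parts and the limit passage for Sobolev rather than cylindrical $f$.
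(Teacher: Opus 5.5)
Your proposal is correct and is essentially the paper's proof. Your three divergence terms are exactly the paper's $U_1$ (the $Lf_j$ and $\nabla (A_f)_{i,j}$ pieces) and $U_2$ (the $\psi'(\Delta_f)\nabla\Delta_f$ piece), estimated in the cylindrical case by \eqref{eq-op-norm-est}, Lemmas~\ref{lem1.1}, \ref{lem1.2} and~\ref{lem0}, H\"older's inequality with exponents $p/(2k-1)$ and $q$, and \eqref{eq1.1}, and then extended by approximation.

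The details you leave implicit are handled in the paper as follows. It bounds the terms separately before applying Lemma~\ref{lem0}, which gives the constant $8(2\|\Phi'\|_\infty+3)k$. It proves $h_{\gamma,q}(\Delta_{f^n},\varepsilon)\to h_{\gamma,q}(\Delta_f,\varepsilon)$ by dominated convergence on $E\times(\varepsilon,\infty)$. It then approximates $g$ separately by truncations $\varphi(g^n)$ with $\|\varphi(g^n)\|_\infty\le 2\|g\|_\infty$, at the price of a factor $2$; in your formulation you can instead note that, for cylindrical $f$, the integration by parts extends to all $g\in W^{1,q}(\gamma)$ by density.
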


\begin{proof}
\textbf{Step 1: the cylindrical case.}
Assume first that $g,f_1,\ldots,f_k\in \mathcal{FC}^\infty$ and that
\[
g(x)=\widetilde g\bigl(\ell_1(x),\ldots,\ell_n(x)\bigr),
\quad
f_j(x)=\widetilde f_j\bigl(\ell_1(x),\ldots,\ell_n(x)\bigr),
\quad j=1,\ldots,k,
\]
for some $\ell_1,\ldots,\ell_n\in E^*$.
Without loss of generality, we may assume that
$\{\ell_1,\ldots,\ell_n\}$ is an orthonormal system in $L^2(\gamma)$.
Then $\{h_{\ell_1},\ldots,h_{\ell_n}\}$ is an orthonormal system in $H(\gamma)$, and
the distribution of the mapping
\[
(\ell_1,\ldots,\ell_n)\colon (E,\gamma)\to\mathbb R^n
\]
is the standard Gaussian measure $\gamma_n$ on $\mathbb R^n$.
	
Set
\[
\Psi_\varepsilon(t):=t^{-1}\Phi_\varepsilon(t),
\]
where $\Phi_\varepsilon(t):=\Phi(t/\varepsilon)$ and $\Phi$ is the fixed function introduced in~\eqref{eq-Phi}.
Taking into account~\eqref{eq-grad} and applying the change-of-measure formula, we obtain
\[
\int_E \sum_{i,j=1}^k \theta_i (A_f)_{i,j}
\langle \nabla f_j,\nabla g\rangle_{H(\gamma)}
\Psi_\varepsilon(\Delta_f)\,d\gamma
=
\int_{\mathbb R^n}
\sum_{i,j=1}^k \theta_i (A_{\widetilde f})_{i,j}
\langle \nabla \widetilde f_j,\nabla \widetilde g\rangle
\Psi_\varepsilon(\Delta_{\widetilde f})\,d\gamma_n.
\]
Integrating by parts with respect to the Gaussian measure $\gamma_n$, we obtain
\[
-\int_{\mathbb R^n}\widetilde g \,(U_1+U_2)\,d\gamma_n,
\]
where
\[
U_1
=
\Psi_\varepsilon(\Delta_{\widetilde f})\,
\sum_{i,j=1}^k \theta_i
\bigl(
(A_{\widetilde f})_{i,j}L\widetilde f_j
+
\langle \nabla \widetilde f_j,\nabla (A_{\widetilde f})_{i,j}\rangle
\bigr),
\]
and
\[
U_2
=
\Psi_\varepsilon'(\Delta_{\widetilde f})\,
\sum_{i,j=1}^k \theta_i (A_{\widetilde f})_{i,j}
\langle \nabla \widetilde f_j,\nabla \Delta_{\widetilde f}\rangle.
\]
Hence
\[
\biggl|\int_E \sum_{i,j=1}^k \theta_i (A_f)_{i,j}
\langle \nabla f_j,\nabla g\rangle_{H(\gamma)}
\Psi_\varepsilon(\Delta_f)\,d\gamma\biggr|
\le
\|g\|_\infty\bigl(\|U_1\|_{L^1(\gamma_n)}+\|U_2\|_{L^1(\gamma_n)}\bigr).
\]
	
\medskip
\noindent
\textbf{Step 2: estimate of the $U_1$ term.}
To estimate $U_1$, we use \eqref{eq-op-norm-est}
and obtain
\[
\Bigl|\sum_{i,j=1}^k \theta_i
(A_{\widetilde f})_{i,j}L\widetilde f_j\Bigr|
\le 
\|A_{\widetilde{f}}\|_{\rm op}\Bigl(\sum_{j=1}^k|L\widetilde{f}_j|^2\Bigr)^{1/2}
\le 
ek^{1-k}\Bigl(\sum_{j=1}^k |\nabla \widetilde{f}_j|^2\Bigr)^{k-1}\Bigl(\sum_{j=1}^k|L\widetilde{f}_j|^2\Bigr)^{1/2}.
\]
Therefore, by Lemma \ref{lem0}, we obtain
\[
\Bigl\|\sum_{i,j=1}^k \theta_i
(A_{\widetilde f})_{i,j}L\widetilde f_j\Bigr\|_{L^{\frac{p}{2k-1}}(\gamma_n)}
\le 
ek^{1-k}
\Bigl\|
\Bigl(\sum_{j=1}^k |\nabla \widetilde{f}_j|^2\Bigr)^{k-1}\Bigl(\sum_{j=1}^k|L\widetilde{f}_j|^2\Bigr)^{1/2}
\Bigr\|_{L^{\frac{p}{2k-1}}(\gamma_n)}
\le 3kb^{2k-1}.
\]
From Lemma \ref{lem1.2} we have
\[
\Bigl|
\sum_{i,j=1}^k \theta_i
\langle \nabla \widetilde{f}_j,\nabla (A_{\widetilde{f}})_{i,j}\rangle
\Bigr|
\le
5k^{\frac32-k}
\Bigl(\sum_{j=1}^k |\nabla \widetilde{f}_j|^2\Bigr)^{k-1}
\Bigl(\sum_{m=1}^k \|D^2 \widetilde{f}_m\|_{\rm HS}^2\Bigr)^{1/2}
\]
and, applying Lemma \ref{lem0} once again,
we obtain
\[
\Bigl\|
\sum_{i,j=1}^k \theta_i
\langle \nabla \widetilde{f}_j,\nabla (A_{\widetilde{f}})_{i,j}\rangle
\Bigr\|_{L^{\frac{p}{2k-1}}(\gamma_n)}
\le 
5k^{\frac32-k}\cdot k^{k-\frac{1}{2}} b^{2k-1}
=5kb^{2k-1}.
\]
Since $\Phi_\varepsilon(t)=0$ for $|t|\le \varepsilon$ and $0\le \Phi_\varepsilon\le 1$,
\begin{equation}\label{eq-est-1}
\|\Psi_\varepsilon(\Delta_{\widetilde f})\|_{L^q(\gamma_n)}^q
=
\int_{\mathbb R^n}
\Delta_{\widetilde f}^{-q}\bigl(\Phi_\varepsilon(\Delta_{\widetilde f})\bigr)^q\,d\gamma_n
\le
\int_{\mathbb R^n}\Delta_{\widetilde f}^{-q}\mathbf 1_{\{\Delta_{\widetilde f}\ge \varepsilon\}}\,d\gamma_n
\le
q\,h_{\gamma_n,q}(\Delta_{\widetilde f},\varepsilon),
\end{equation}
by \eqref{eq1.1}. Hence, since $q^{1/q}\le e^{1/e}\le 2$, by H\"older's inequality, we obtain
\[
\|U_1\|_{L^1(\gamma_n)}
\le
16kb^{2k-1}\,
\bigl(h_{\gamma_n,q}(\Delta_{\widetilde f},\varepsilon)\bigr)^{1/q}.
\]
	
\medskip
\noindent
\textbf{Step 3: estimate of the $U_2$ term.}
Lemmas \ref{lem1.1} and \ref{lem1.2} imply
\[
\Bigl|\sum_{i,j=1}^k \theta_i (A_{\widetilde f})_{i,j}
\langle \nabla \widetilde f_j,\nabla \Delta_{\widetilde f}\rangle\Bigr|
\le
\|A_{\widetilde{f}}J_{\widetilde{f}}\|_{\rm op}
|\nabla \Delta_{\widetilde{f}}|\le 
4k^{\frac{3}{2}-k}\Delta_{\widetilde{f}}\,
\Bigl(\sum_{j=1}^k |\nabla \widetilde f_j|^2\Bigr)^{k-1}
\Bigl(\sum_{j=1}^k \|D^2 \widetilde{f}_j\|_{\rm HS}^2\Bigr)^{1/2}.
\]
Therefore
\[
|U_2|
\le
4k^{\frac{3}{2}-k}
\bigl|
\Phi_\varepsilon'(\Delta_{\widetilde f})
-
\Delta_{\widetilde f}^{-1}\Phi_\varepsilon(\Delta_{\widetilde f})
\bigr|
\Bigl(\sum_{j=1}^k |\nabla \widetilde f_j|^2\Bigr)^{k-1}
\Bigl(\sum_{j=1}^k \|D^2\widetilde f_j\|_{\rm HS}^2\Bigr)^{1/2}.
\]
Now, by Lemma \ref{lem0},
\[
\Bigl\|
\Bigl(\sum_{j=1}^k |\nabla \widetilde f_j|^2\Bigr)^{k-1}
\Bigl(\sum_{j=1}^k \|D^2\widetilde f_j\|_{\rm HS}^2\Bigr)^{1/2}
\Bigr\|_{L^{\frac{p}{2k-1}}(\gamma_n)}
\le
k^{k-\frac12}b^{2k-1}.
\]
Hence, by H\"older's inequality,
\[
\|U_2\|_{L^1(\gamma_n)}
\le
4kb^{2k-1}\,
\bigl\|
\Phi_\varepsilon'(\Delta_{\widetilde f})
-
\Delta_{\widetilde f}^{-1}\Phi_\varepsilon(\Delta_{\widetilde f})
\bigr\|_{L^q(\gamma_n)}.
\]
Since $|\Phi_\varepsilon'|\le \|\Phi'\|_\infty\varepsilon^{-1}I_{\{\varepsilon\le t\le 2\varepsilon\}}$,
we have
\[
\|\Phi_\varepsilon'(\Delta_{\widetilde f})\|_{L^q(\gamma_n)}
\le
2\|\Phi'\|_\infty\|\Delta_{\widetilde f}^{-1}I_{\{\Delta_{\widetilde f}\ge \varepsilon\}}\|_{L^q(\gamma_n)}
\le
4\|\Phi'\|_\infty\bigl(h_{\gamma_n,q}(\Delta_{\widetilde f},\varepsilon)\bigr)^{1/q},
\]
where we used that $q^{1/q}\le 2$.
Similarly, by \eqref{eq-est-1}, for 
$\Psi_\varepsilon(\Delta_{\widetilde f}):=\Delta_{\widetilde f}^{-1}\Phi_\varepsilon(\Delta_{\widetilde f})$
we have 
\[
\|\Delta_{\widetilde f}^{-1}\Phi_\varepsilon(\Delta_{\widetilde f})\|_{L^q(\gamma_n)}
\le 2\bigl(h_{\gamma_n,q}(\Delta_{\widetilde f},\varepsilon)\bigr)^{1/q}.
\]
Therefore,
\[
\|U_2\|_{L^1(\gamma_n)}
\le
8\bigl(2\|\Phi'\|_\infty+1\bigr)kb^{2k-1}\,
\bigl(h_{\gamma_n,q}(\Delta_{\widetilde f},\varepsilon)\bigr)^{1/q}.
\]
	
Combining the estimates for $U_1$ and $U_2$, we obtain
\begin{equation}\label{eq-est-2}
\biggl|\int_E \sum_{i,j=1}^k \theta_i (A_f)_{i,j}
\langle \nabla f_j,\nabla g\rangle_{H(\gamma)}
\Psi_\varepsilon(\Delta_f)\,d\gamma\biggr|
\le
8\bigl(2\|\Phi'\|_\infty+3\bigr)kb^{2k-1}\|g\|_\infty
\bigl(h_{\gamma_n,q}(\Delta_{\widetilde f},\varepsilon\bigr)^{1/q}.
\end{equation}
Since 
$\Delta_f = \Delta_{\widetilde f}\circ(\ell_1,\ldots,\ell_n)$, we have
\[
h_{\gamma_n,q}(\Delta_{\widetilde f},\varepsilon)
=
h_{\gamma,q}(\Delta_f,\varepsilon).
\]
Thus, the desired estimate holds in the cylindrical case.

\medskip
\noindent
\textbf{Step 4: approximation in $f$.}
Let now $f_j\in W^{2,p}(\gamma)$, and let $f_{j}^n\in \mathcal{FC}^\infty$ be such that
\[
f_{j}^n\to f_j \quad\text{in}\quad W^{2,p}(\gamma), \quad j=1,\ldots,k.
\]
Set $f^n=(f_{1}^n,\ldots,f_{k}^n)$.
It follows from the polynomial structure of the adjugate mapping and the convergence
$\nabla f_{j}^n\to \nabla f_j$ in $L^p(\gamma)$ that
\[
(A_{f^n})_{i,j}\to (A_f)_{i,j}
\quad\text{in}\quad L^{\frac{p}{2k-2}}(\gamma).
\]
Consequently,
\[
(A_{f^n})_{i,j}\langle \nabla f_{j}^n,\nabla g\rangle
\to 
(A_f)_{i,j}\langle \nabla f_j,\nabla g\rangle
\quad\text{in}\quad L^1(\gamma).
\]
Passing to a subsequence if necessary, we may assume that
\[
\nabla f_{j}^n\to \nabla f_j \quad \text{a.e.}
\]
and, consequently,
\[
\Delta_{f^n}\to \Delta_f \quad \text{a.e.}
\]
Therefore,
\[
\Psi_\varepsilon(\Delta_{f^n})
\to \Psi_\varepsilon(\Delta_f)
\quad \text{a.e.}
\]
Moreover,
\[
|\Psi_\varepsilon(\Delta_{f^n})|
\le \varepsilon^{-1}.
\]
Combining these facts, we conclude that
\[
\int_E \sum_{i,j=1}^k \theta_i (A_{f^n})_{i,j}
\langle \nabla f_{j}^n,\nabla g\rangle
\Psi_\varepsilon(\Delta_{f^n})\,d\gamma
\to
\int_E \sum_{i,j=1}^k \theta_i (A_f)_{i,j}
\langle \nabla f_j,\nabla g\rangle
\Psi_\varepsilon(\Delta_f)\,d\gamma.
\]

Since $\Delta_{f^n}\to \Delta_f$ $\gamma$-a.e., we have
\[
{\bf 1}_{\{\Delta_{f^n}(x)\le s\}}\to  {\bf 1}_{\{\Delta_f(x)\le s\}}
\]
for $(\gamma\otimes\lambda)$-a.e. $(x,s)\in E\times(\varepsilon,\infty)$, because for each fixed $x$ this convergence may fail only at the point $s=\Delta_f(x)$, which is negligible with respect to the Lebesgue measure. Hence, writing
\[
h_{\gamma,q}(\Delta_{f^n},\varepsilon)
=
\int_E\int_\varepsilon^\infty s^{-q-1} {\bf 1}_{\{\Delta_{f^n}(x)\le s\}}\,ds\, \gamma(dx),
\]
and using that
\[
0\le s^{-q-1}{\bf 1}_{\{\Delta_{f^n}(x)\le s\}}\le s^{-q-1},
\quad
\int_\varepsilon^\infty s^{-q-1}\,ds<\infty,
\]
the dominated convergence theorem yields
\[
h_{\gamma,q}(\Delta_{f^n},\varepsilon)\to h_{\gamma,q}(\Delta_f,\varepsilon).
\]
Thus, passing to the limit in \eqref{eq-est-2}
applied to $f^n_1, \ldots, f^n_k$ and $g\in \mathcal{FC}^\infty$, we obtain
\begin{equation}\label{eq-est-3}
\biggl|\int_E \sum_{i,j=1}^k \theta_i (A_f)_{i,j}
\langle \nabla f_j,\nabla g\rangle_{H(\gamma)}
\Psi_\varepsilon(\Delta_f)\,d\gamma\biggr|
\le
8\bigl(2\|\Phi'\|_\infty+3\bigr)kb^{2k-1}\|g\|_\infty
\bigl(h_{\gamma,q}(\Delta_{f},\varepsilon)\bigr)^{1/q}.
\end{equation}
for $g\in \mathcal{FC}^\infty$ and arbitrary $f_1, \ldots, f_k\in W^{2, p}(\gamma)$.

\medskip
\noindent
\textbf{Step 5: approximation in $g$.}
Let $g\in W^{1,q}(\gamma)\cap L^\infty(\gamma)$.
Choose $g^n\in \mathcal{FC}^\infty$ such that
\[
g^n\to g \quad\text{in}\quad W^{1,q}(\gamma)
\quad\text{and}\quad
g^n\to g \quad\text{a.e.}
\]
Let $\varphi\in C_0^\infty(\mathbb R)$ satisfy
\[
\varphi(t)=t \quad \text{for}\quad |t|\le \|g\|_{L^\infty(\gamma)}\quad\text{and}\quad
\|\varphi\|_\infty\le 2\|g\|_{L^\infty(\gamma)}.
\]
Then
\[
\varphi(g^n)\to g \quad\text{in}\quad W^{1,q}(\gamma)\quad\text{and}\quad
\|\varphi(g^n)\|_{L^\infty(\gamma)}\le 2\|g\|_{L^\infty(\gamma)}.
\]
Since
\[
\Psi_\varepsilon(\Delta_f)\sum_{i,j=1}^k \theta_i (A_f)_{i,j}\|\nabla f_j\|_{H(\gamma)}
\in L^{\frac{p}{2k-1}}(\gamma),
\]
and
$
\frac{1}{q}+\frac{2k-1}{p}=1,
$
H\"older's inequality yields
\[
\int_E \sum_{i,j=1}^k \theta_i (A_f)_{i,j}
\langle \nabla f_j,\nabla \varphi(g^n)\rangle_{H(\gamma)}
\Psi_\varepsilon(\Delta_f)\,d\gamma
\to
\int_E \sum_{i,j=1}^k \theta_i (A_f)_{i,j}
\langle \nabla f_j,\nabla g\rangle_{H(\gamma)}
\Psi_\varepsilon(\Delta_f)\,d\gamma.
\]
Applying \eqref{eq-est-3} to $\varphi(g^n)$ and passing to the limit, we obtain
\[
\biggl|
\int_E \sum_{i,j=1}^k \theta_i (A_f)_{i,j}
\langle \nabla f_j,\nabla g\rangle_{H(\gamma)}
\Psi_\varepsilon(\Delta_f)\,d\gamma
\biggr|
\le
16(2\|\Phi'\|_\infty+3)k\,\|g\|_\infty b^{2k-1}
\bigl(h_{\gamma,q}(\Delta_f,\varepsilon)\bigr)^{1/q},
\]
which proves the lemma.
\end{proof}

\section{Regularity of distributions}
\label{sec:reg}

\begin{theorem}\label{t1.1}
Let $k\in \mathbb{N}$, $p\ge 2k$, $b>0$, and set
\[
q:=\frac{p}{p-2k+1}.
\]
Then, for every mapping $f=(f_1,\ldots,f_k)$ with $f_j\in W^{2,p}(\gamma)$ and
\[
\max_{1\le j\le k}\|f_j\|_{\dot W^{2,p}(\gamma)}\le b,
\]
one has
\[
\sigma_f(t)\le
128k b^{2k-1}\, t\, \bigl(h_{\gamma,q}(\Delta_f,\varepsilon)\bigr)^{1/q}
+\gamma(\Delta_f\le 2\varepsilon)
\quad \forall\, t,\varepsilon>0.
\]

If, in addition,
\[
\gamma(\Delta_f\le \varepsilon)\le a\varepsilon^\varkappa
\quad \forall\, \varepsilon>0
\]
for some $\varkappa\in(0,q)$, then
\[
\sigma_f(t)\le
512 k^\alpha (1-\alpha)^{-\alpha} a^{\alpha/\varkappa} b^{(2k-1)\alpha}\cdot t^\alpha
\quad \forall\, t>0,
\]
where
\[
\alpha:=\frac{p\varkappa}{p+\varkappa(2k-1)}.
\]
\end{theorem}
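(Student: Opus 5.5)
Fix $\theta$ with $|\theta|=1$ and $\varphi\in C_0^\infty(\mathbb R^k)$ with $\|\varphi\|_\infty\le t$ and $\|\partial_\theta\varphi\|_\infty\le 1$. We need to bound $\int_E(\partial_\theta\varphi)(f)\,d\gamma$. We split this integral with the cutoff $\Phi_\varepsilon(\Delta_f)$:
\[
\int_E(\partial_\theta\varphi)(f)\,d\gamma=\int_E(\partial_\theta\varphi)(f)\Phi_\varepsilon(\Delta_f)\,d\gamma+\int_E(\partial_\theta\varphi)(f)\bigl(1-\Phi_\varepsilon(\Delta_f)\bigr)\,d\gamma .
\]
Since $1-\Phi_\varepsilon(\Delta_f)$ vanishes off $\{\Delta_f\le 2\varepsilon\}$ (recall $\Delta_f\ge0$), the second term is at most $\gamma(\Delta_f\le 2\varepsilon)$.

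For the first term we express $\partial_\theta\varphi$ through Sobolev gradients. Put $g:=\varphi(f)$. By the chain rule, $\nabla g=\sum_m(\partial_m\varphi)(f)\nabla f_m$, so
\[
\langle\nabla f_j,\nabla g\rangle=\sum_m(M_f)_{j,m}(\partial_m\varphi)(f).
\]
By \eqref{inver}, on $\{\Delta_f>0\}$,
\[
\sum_{i,j}\theta_i(A_f)_{i,j}\langle\nabla f_j,\nabla g\rangle=\Delta_f\,\langle\theta,(\nabla\varphi)(f)\rangle=\Delta_f\,(\partial_\theta\varphi)(f).
\]
The cutoff $\Phi_\varepsilon(\Delta_f)$ vanishes wherever $\Delta_f\le\varepsilon$. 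Hence the first term equals exactly the integral in Lemma~\ref{lem:main}, namely
\[
\int_E\sum_{i,j}\theta_i(A_f)_{i,j}\langle\nabla f_j,\nabla g\rangle\,\Delta_f^{-1}\Phi_\varepsilon(\Delta_f)\,d\gamma .
\]
We check that the lemma applies. We have $g\in L^\infty$ with $\|g\|_\infty\le t$. Also $g\in W^{1,q}$, because $\varphi$ is Lipschitz and $f_j\in W^{1,p}\subset W^{1,q}$; note $p\ge 2k$ gives $q\le p$. Lemma~\ref{lem:main} with the $\Phi$ of Remark~\ref{rem-Phi} ($\|\Phi'\|_\infty\le2$) then bounds the first term by
\[
128\,k\,t\,b^{2k-1}\bigl(h_{\gamma,q}(\Delta_f,\varepsilon)\bigr)^{1/q}.
\]
Taking the supremum over $\varphi$ and $\theta$ gives the first inequality.

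For the second claim, the small-ball bound gives
\[
h_{\gamma,q}(\Delta_f,\varepsilon)\le a\int_\varepsilon^\infty s^{\varkappa-q-1}\,ds=\frac{a}{q-\varkappa}\,\varepsilon^{\varkappa-q},
\]
which is finite since $\varkappa<q$. Therefore
\[
\sigma_f(t)\le 128kb^{2k-1}t\,\Bigl(\frac{a}{q-\varkappa}\Bigr)^{1/q}\varepsilon^{\varkappa/q-1}+2^\varkappa a\,\varepsilon^\varkappa .
\]
We choose $\varepsilon$ to balance the two terms. Up to constants this means
\[
t\,\varepsilon^{\varkappa/q-1}\asymp\varepsilon^{\varkappa},
\]
that is, $\varepsilon\asymp t^{1/(1+\varkappa-\varkappa/q)}$. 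Since $1-1/q=(2k-1)/p$, we have $1+\varkappa-\varkappa/q=1+\varkappa(2k-1)/p$. This yields
\[
\varepsilon^\varkappa\asymp t^{\varkappa/(1+\varkappa(2k-1)/p)}=t^\alpha,
\]
the claimed exponent.

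The main obstacle is bookkeeping the constant into the stated form $512k^\alpha(1-\alpha)^{-\alpha}a^{\alpha/\varkappa}b^{(2k-1)\alpha}$. For the factor $(q-\varkappa)^{-1/q}$ we would use
\[
q-\varkappa=q\Bigl(1-\frac{\varkappa}{q}\Bigr)\quad\text{and}\quad 1-\alpha=\frac{1-\varkappa/q}{1+\varkappa(2k-1)/p}.
\]
We would choose $\varepsilon$ precisely as
\[
\varepsilon^{\varkappa}=\bigl(a^{-1}\cdot 128k\,b^{2k-1}t\,(a/(q-\varkappa))^{1/q}\bigr)^{\alpha}\cdot(\text{suitable factor}),
\]
so that both terms become $a\,\varepsilon^\varkappa$ times a power of $2$. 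We would then simplify the powers of $k$ and $q$ using $q^{1/q}\le2$ and $(\cdot)^\alpha\le(\cdot)$ for factors $\ge1$. The value $512$ leaves slack for the factor $2^\varkappa\le2$ and the rounding.
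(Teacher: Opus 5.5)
Your proposal is correct and is essentially the paper's proof. It uses the same adjugate identity to turn $\Delta_f(\partial_\theta\varphi)(f)$ into the integrand of Lemma~\ref{lem:main}, the same $\Phi_\varepsilon$ split with $\|\Phi'\|_\infty\le 2$ giving the constant $128$, and the same optimization of $At\varepsilon^{-(1-\varkappa/q)}+2^\varkappa a\varepsilon^\varkappa$ in $\varepsilon$, closed by $(q-\varkappa)^{-\alpha/q}\le(1-\alpha)^{-\alpha}$ (which follows from your identities, since $q-\varkappa\ge 1-\alpha$ and $q\ge1$).

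One correction to your constant bookkeeping: only $\varkappa<q\le 2k$ is assumed, so $\varkappa$ may exceed $1$ and the bound $2^\varkappa\le 2$ is not available. The factor still costs at most $2$, because it enters the optimized bound only as $(2^\varkappa)^{1-\alpha}=2^{\alpha(1-\varkappa/q)}\le 2$. For this to work, you must balance the first term against $2^\varkappa a\varepsilon^\varkappa$, not against $a\varepsilon^\varkappa$, or minimize exactly as the paper does.
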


\begin{proof}
Fix $t>0$, $\varepsilon>0$, a unit vector $\theta\in\mathbb R^k$, and
$\varphi\in C_0^\infty(\mathbb R^k)$ such that
\[
\|\varphi\|_\infty\le t,
\quad
\|\partial_\theta\varphi\|_\infty\le 1.
\]
Set
\[
g:=\varphi\circ f.
\]
Then, since $q\le p$, we have $g\in W^{1,q}(\gamma)\cap L^\infty(\gamma)$ and
$\|g\|_\infty\le t$.
By the chain rule,
\[
\nabla g=\sum_{m=1}^k (\partial_m\varphi)(f)\,\nabla f_m,
\]
hence
\[
\langle \nabla f_j,\nabla g\rangle_{H(\gamma)}
=
\sum_{m=1}^k (\partial_m\varphi)(f)\,
\langle \nabla f_j,\nabla f_m\rangle_{H(\gamma)}.
\]
Since $A_f$ is the adjugate of the Malliavin matrix
\[
M_f=\bigl(\langle \nabla f_i,\nabla f_j\rangle_{H(\gamma)}\bigr)_{i,j=1}^k,
\]
we have $A_fM_f=\Delta_f I$, and therefore
\[
\sum_{j=1}^k (A_f)_{i,j}\langle \nabla f_j,\nabla g\rangle_{H(\gamma)}
=
\Delta_f\,(\partial_i\varphi)(f).
\]
Multiplying by $\theta_i$ and summing over $i$, we obtain
\[
\sum_{i,j=1}^k \theta_i (A_f)_{i,j}
\langle \nabla f_j,\nabla g\rangle_{H(\gamma)}
=
\Delta_f\,(\partial_\theta\varphi)(f).
\]
Hence
\begin{align*}
\int_E \partial_\theta\varphi(f)\,d\gamma
&=
\int_E (\partial_\theta\varphi)(f)\,\Delta_f\,\Delta_f^{-1}\Phi_\varepsilon(\Delta_f)\,d\gamma
+\int_E (\partial_\theta\varphi)(f)\bigl(1-\Phi_\varepsilon(\Delta_f)\bigr)\,d\gamma \\
&=
\int_E
\sum_{i,j=1}^k \theta_i (A_f)_{i,j}
\langle \nabla f_j,\nabla g\rangle_{H(\gamma)}
\Delta_f^{-1}\Phi_\varepsilon(\Delta_f)\,d\gamma 
+\int_E (\partial_\theta\varphi)(f)\bigl(1-\Phi_\varepsilon(\Delta_f)\bigr)\,d\gamma.
\end{align*}
Taking absolute values and applying Lemma~\ref{lem:main}, together with the choice
of $\Phi$ such that $\|\Phi'\|_\infty\le 2$ from Remark~\ref{rem-Phi}, we get
\begin{align*}
\biggl|\int_E \partial_\theta\varphi(f)\,d\gamma\biggr|
&\le
32(\|\Phi'\|_\infty+2)k\|g\|_\infty b^{2k-1}
\,h_{\gamma,q}(\Delta_f,\varepsilon)^{1/q}
+\int_E \bigl(1-\Phi_\varepsilon(\Delta_f)\bigr)\,d\gamma \\
&\le
128k b^{2k-1}\, t\, \bigl(h_{\gamma,q}(\Delta_f,\varepsilon)\bigr)^{1/q}
+\gamma(\Delta_f\le 2\varepsilon).
\end{align*}
Taking the supremum over all admissible $\theta$ and $\varphi$, we obtain
\[
\sigma_f(t)\le
128k b^{2k-1}\, t\, \bigl(h_{\gamma,q}(\Delta_f,\varepsilon)\bigr)^{1/q}
+\gamma(\Delta_f\le 2\varepsilon),
\]
which proves the first assertion.
	
Assume now that
\[
\gamma(\Delta_f\le s)\le as^\varkappa
\quad \forall\, s>0
\]
for some $\varkappa\in(0,q)$. Then
\[
h_{\gamma,q}(\Delta_f,\varepsilon)
=
\int_\varepsilon^\infty s^{-q-1}\gamma(\Delta_f\le s)\,ds
\le
a\int_\varepsilon^\infty s^{-q-1+\varkappa}\,ds
=
\frac{a}{q-\varkappa}\,\varepsilon^{-q+\varkappa}.
\]
Therefore,
\[
\bigl(h_{\gamma,q}(\Delta_f,\varepsilon)\bigr)^{1/q}
\le
\Bigl(\frac{a}{q-\varkappa}\Bigr)^{1/q}\varepsilon^{-1+\varkappa/q},
\]
and also
\[
\gamma(\Delta_f\le 2\varepsilon)\le 2^\varkappa a\,\varepsilon^\varkappa.
\]
Hence
\[
\sigma_f(t)\le At\varepsilon^{-\beta}+B\varepsilon^\varkappa,
\]
where
\[
A:=128k b^{2k-1}\Bigl(\frac{a}{q-\varkappa}\Bigr)^{1/q},
\quad
B:=2^\varkappa a,
\quad
\beta:=1-\frac{\varkappa}{q}=\frac{q-\varkappa}{q}.
\]

For fixed $t>0$, the function
\[
\varepsilon\mapsto At\varepsilon^{-\beta}+B\varepsilon^\varkappa
\]
is minimized at
\[
\varepsilon_*=
\Bigl(\frac{\beta A t}{\varkappa B}\Bigr)^{\frac{1}{\beta+\varkappa}}.
\]
Substituting this value yields
\[
\sigma_f(t)\le
(\beta+\varkappa)\beta^{-\frac{\beta}{\beta+\varkappa}}
\varkappa^{-\frac{\varkappa}{\beta+\varkappa}}
A^{\frac{\varkappa}{\beta+\varkappa}}
B^{\frac{\beta}{\beta+\varkappa}}
t^{\frac{\varkappa}{\beta+\varkappa}}.
\]
Finally,
\[
\frac{\varkappa}{\beta+\varkappa}
=
\frac{\varkappa}{1+\varkappa-\varkappa/q}
=
\frac{p\varkappa}{p+\varkappa(2k-1)}
=
\alpha
\]
and, since $\alpha^{-\alpha}(1-\alpha)^{-(1-\alpha)}\le 2$,
we obtain
\begin{align*}
(\beta+\varkappa)\beta^{-\frac{\beta}{\beta+\varkappa}}
\varkappa^{-\frac{\varkappa}{\beta+\varkappa}}
A^{\frac{\varkappa}{\beta+\varkappa}}
B^{\frac{\beta}{\beta+\varkappa}}
&=
\alpha^{-\alpha}(1-\alpha)^{-(1-\alpha)}
128^\alpha 2^{\alpha\beta}
k^\alpha (q-\varkappa)^{-\alpha/q} a^{\alpha/\varkappa} b^{(2k-1)\alpha} 
\\
&\le
512\, k^\alpha (q-\varkappa)^{-\alpha/q} a^{\alpha/\varkappa} b^{(2k-1)\alpha}
\\
&\le 
512\, k^\alpha (1-\alpha)^{-\alpha} a^{\alpha/\varkappa} b^{(2k-1)\alpha},
\end{align*}
where in the last step we used the estimate
\[
(q-\varkappa)^{-\alpha/q}\le 
(1-\alpha)^{-\alpha/q}\le (1-\alpha)^{-\alpha},
\]
which follows from
\[
1-\alpha
=
1-\frac{\varkappa}{1+\varkappa-\varkappa/q}
=
\frac{q-\varkappa}{q-\varkappa+\varkappa q}
\le q-\varkappa.
\]
This proves the second assertion.
\end{proof}

\begin{theorem}\label{t1.2}
Let $k,d\in\mathbb{N}$ with $d\ge 2$, and let $a,b>0$. Set
\[
\varkappa:=\frac{1}{2k(d-1)}.
\]
Then, for every mapping $f=(f_1,\ldots,f_k)$ with $f_j\in \mathcal{P}_d(\gamma)$, satisfying
\[
\int_E \Delta_f\,d\gamma \ge a
\quad\text{and}\quad
\max_{1\le j\le k}\operatorname{Var}_\gamma(f_j)\le b,
\]
and for every $\alpha\in(0,\varkappa)$, one has
\[
\sigma_f(t)
\le
C_1kd(\varkappa-\alpha)^{-\frac{d(2k-1)}{4k(d-1)}}
\bigl(a^{-1}b^{\frac{2k-1}{2}}t\bigr)^\alpha
\quad \forall t>0,
\]
where $C_1>0$ is an absolute constant. In particular, optimizing over $\alpha$, we obtain
\[
\sigma_f(t)
\le
C_2kd
\bigl(1+\bigl|\ln\bigl(a^{-1}b^{\frac{2k-1}{2}}t\bigr)\bigr|\bigr)^{\frac{d(2k-1)}{4k(d-1)}}
\bigl(a^{-1}b^{\frac{2k-1}{2}}t\bigr)^\varkappa
\quad \forall t>0,
\]
for some absolute constant $C_2>0$.
\end{theorem}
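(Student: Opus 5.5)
We deduce Theorem~\ref{t1.2} from the second assertion of Theorem~\ref{t1.1}, applied with a large finite $p$ chosen in terms of $\alpha$. The order of steps is: a scaling reduction, Sobolev bounds via hypercontractivity, a small-ball bound for $\Delta_f$, the choice of $p$, and finally the optimization over $\alpha$.

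\textbf{Scaling reduction.} Constants do not affect $\sigma_f$ or $\Delta_f$, so we may assume $\int_E f_j\,d\gamma=0$. Replacing $f$ by $\lambda f$ with $\lambda=b^{-1/2}$ gives $\Delta_{\lambda f}=\lambda^{2k}\Delta_f$. Moreover, $\sigma_{\lambda f}(\lambda t)=\lambda\sigma_f(t)$ up to the normalization in the definition; more precisely, $\sigma_f(t)$ scales compatibly, as $\varphi(x)\mapsto\lambda\varphi(x/\lambda)$ shows that $\sigma_{\lambda f}(\lambda t)=\lambda \sigma_f(t)$ is \emph{not} the right relation. The correct relation is $\sigma_{\lambda f}(\lambda t)=\sigma_f(t)$: if $\varphi$ is admissible for $f$ at level $t$, then $\lambda\varphi(\cdot/\lambda)$ is admissible for $\lambda f$ at level $\lambda t$. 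Hence we may assume $b=1$, at the cost of replacing $a$ by $ab^{-k}$. The final quantity $a^{-1}b^{(2k-1)/2}t$ should then come out after the scaling; I expect to verify the exponent bookkeeping at the end, possibly using the crude bound $\int\Delta_f\,d\gamma\lesssim b^k$ to trade powers of $b$.

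\textbf{Sobolev bounds.} For $f_j\in\mathcal P_d(\gamma)$ with variance at most $1$, hypercontractivity (Nelson) gives $\|f_j\|_{\dot W^{2,p}(\gamma)}\le C d\,(p-1)^{d/2}$. Here $\nabla f_j$ and $D^2f_j$ are polynomial of degree at most $d$ in each chaos, $Lf_j$ has $L^2$-norm at most $d\|f_j\|_{L^2}$, and $\|\nabla f_j\|_{L^2}^2\le d\operatorname{Var}f_j$. So we take $b_p=Cd\,p^{d/2}$.

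\textbf{Small-ball bound.} $\Delta_f\in\mathcal P_{2k(d-1)}(\gamma)$ with $\int\Delta_f\,d\gamma\ge a$. The Carbery--Wright inequality gives
\[
\gamma(\Delta_f\le\varepsilon)\le C\,2k(d-1)\,(\varepsilon/a)^{\varkappa},
\]
so we may take the constant $a'=Ck(d-1)a^{-\varkappa}$ in Theorem~\ref{t1.1}.

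\textbf{Choice of $p$.} Given $\alpha<\varkappa$, we choose $p$ with $p\varkappa/(p+\varkappa(2k-1))=\alpha$, i.e.
\[
p=\frac{\alpha\varkappa(2k-1)}{\varkappa-\alpha}\asymp\frac{\varkappa^2(2k-1)}{\varkappa-\alpha}.
\]
If this is smaller than $2k$, we instead take $p=2k$ and lower $\alpha$; for $t\le1$ a smaller exponent only worsens the bound, and for $t$ large $\sigma_f\le 1$ holds trivially. We also need $\varkappa<q$, which holds since $q>1\ge\varkappa$. Theorem~\ref{t1.1} then yields
\[
\sigma_f(t)\le 512k^\alpha(1-\alpha)^{-\alpha}(a')^{\alpha/\varkappa}b_p^{(2k-1)\alpha}t^\alpha .
\]
Here $(a')^{\alpha/\varkappa}\lesssim kd\,a^{-\alpha}$ and $b_p^{(2k-1)\alpha}\lesssim (Cd)^{(2k-1)\alpha}p^{d(2k-1)\alpha/2}$. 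Since $\alpha\le\varkappa=1/(2k(d-1))$, the powers of $d$ and $C$ are bounded by absolute constants. Also $p^{d(2k-1)\alpha/2}\lesssim(\varkappa-\alpha)^{-d(2k-1)\varkappa/2}$, and $d(2k-1)\varkappa/2=d(2k-1)/(4k(d-1))$, which matches the claimed exponent.

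\textbf{Main obstacle and the logarithmic bound.} The main obstacle is this bookkeeping: keeping the constants uniform in $k,d$ and recovering exactly the combination $a^{-1}b^{(2k-1)/2}$. For the second inequality, we assume $s:=a^{-1}b^{(2k-1)/2}t<1/e$ (otherwise the bound is trivial since $\sigma_f\le 2$). We take $\varkappa-\alpha=1/|\ln s|$; then $s^{\alpha}=s^{\varkappa}s^{-1/|\ln s|}=e\,s^\varkappa$, which gives the logarithmic factor.
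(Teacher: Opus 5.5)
Your argument is essentially the paper's: apply the second part of Theorem~\ref{t1.1} with the hypercontractive Sobolev bounds and the Carbery--Wright small-ball estimate, take $p$ of order $(\varkappa-\alpha)^{-1}$, and optimize in $\alpha$. The paper skips your rescaling: keeping $b$, the bound $\|f_j\|_{\dot W^{2,p}(\gamma)}\le 3d(p-1)^{d/2}\sqrt b$ produces $b^{\frac{2k-1}{2}}$ directly, although your scaling also works out exactly, since $(ab^{-k})^{-1}b^{-1/2}t=a^{-1}b^{\frac{2k-1}{2}}t$. The paper also simply takes $p=(\varkappa-\alpha)^{-1}\ge 2k(d-1)\ge 2k$, which gives $\alpha\le\alpha_p:=p\varkappa/(p+\varkappa(2k-1))$ and removes the need for your fallback $p=2k$.

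Two small slips need fixing. The case split must be on $A:=a^{-1}b^{\frac{2k-1}{2}}t\le 1$ rather than on $t\le 1$. In the logarithmic step you need $A<e^{-1/\varkappa}$, not $A<1/e$, for $\alpha=\varkappa-1/\ln A^{-1}$ to be positive; the remaining range is trivial because there $A^\varkappa\ge e^{-1}$ while $\sigma_f\le 1$.
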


\begin{proof}
Replacing each component $f_j$ by
\[
f_j-\int_E f_j\,d\gamma
\]
does not change either $\Delta_f$ or $\sigma_f$. Therefore, without loss of generality, we may assume that
\[
\int_E f_j\,d\gamma=0
\quad \forall\, j=1,\ldots,k.
\]
Hence
\[
\|f_j\|_{L^2(\gamma)}^2={\rm Var}_\gamma(f_j)\le b
\quad \forall\, j=1,\ldots,k.
\]

We now fix some $p\ge 2k\ge2$, which will be specified later.
We note that for every $g\in \mathcal P_d(\gamma)$, the standard hypercontractivity property of the Ornstein--Uhlenbeck semigroup implies
(see, for instance, \cite[Theorem~5.5.3]{Gaus} or \cite[Theorem~5.10 and Remark~5.11]{Jan97})
that
\begin{equation}\label{eq-hyper}
	\|g\|_{L^p(\gamma)}\le (p-1)^{d/2}\|g\|_{L^2(\gamma)}
	\quad \forall\, g\in \mathcal P_d(\gamma).
\end{equation}
For a fixed orthonormal basis $\{e_i\}_{i=1}^\infty$ in $H(\gamma)$, since differentiation lowers the degree of a polynomial by one, we have
\[
\partial_{e_i} f_j\in \mathcal P_{d-1}(\gamma),
\quad
\partial_{e_i}\partial_{e_r} f_j\in \mathcal P_{d-2}(\gamma).
\]
Moreover, for every $g\in \mathcal H_m(\gamma)$ one has
\[
\|\nabla g\|_{L^2(\gamma)}^2
=
\int_E \langle \nabla g,\nabla g\rangle_{H(\gamma)}\,d\gamma
=
-\int_E gLg\,d\gamma
=
m\|g\|_{L^2(\gamma)}^2,
\]
since elements of $\mathcal H_m(\gamma)$ are eigenfunctions of the
Ornstein--Uhlenbeck operator $L$ with eigenvalue $-m$.
Consequently,
\[
\|D_H^2g\|_{L^2(\gamma)}^2
=
m(m-1)\|g\|_{L^2(\gamma)}^2.
\]
Therefore, expanding $f_j$ into the sum of its orthogonal chaos components, we obtain
\[
\|\nabla f_j\|_{L^2(\gamma)}
\le
\sqrt{d}\,\|f_j\|_{L^2(\gamma)}
\]
and
\[
\|D_H^2 f_j\|_{L^2(\gamma)}
\le
\sqrt{d(d-1)}\,\|f_j\|_{L^2(\gamma)}
\le d\,\|f_j\|_{L^2(\gamma)}.
\]
We now combine these bounds with \eqref{eq-hyper}. Since $p/2\ge 1$, we have
\begin{align*}
\|D_H^2 f_j\|_{L^p(\gamma)}^2
&=
\Bigl\|\sum_{i,r=1}^\infty |\partial_{e_i}\partial_{e_r}f_j|^2\Bigr\|_{L^{p/2}(\gamma)}
\le
\sum_{i,r=1}^\infty \||\partial_{e_i}\partial_{e_r}f_j|^2\|_{L^{p/2}(\gamma)}
\\
&=
\sum_{i,r=1}^\infty \|\partial_{e_i}\partial_{e_r}f_j\|_{L^p(\gamma)}^2
\le
(p-1)^{d-2}
\sum_{i,r=1}^\infty \|\partial_{e_i}\partial_{e_r}f_j\|_{L^2(\gamma)}^2
\\
&=
(p-1)^{d-2}\|D_H^2 f_j\|_{L^2(\gamma)}^2
\le
d^2(p-1)^{d-2}\|f_j\|_{L^2(\gamma)}^2.
\end{align*}
Thus
\begin{equation}\label{eq-D-2}
\|D_H^2 f_j\|_{L^p(\gamma)}
\le d(p-1)^{(d-2)/2}\|f_j\|_{L^2(\gamma)}.
\end{equation}
Similarly,
\begin{equation}\label{eq-D-1}
\|\nabla f_j\|_{L^p(\gamma)}
\le \sqrt{d}(p-1)^{(d-1)/2}\|f_j\|_{L^2(\gamma)}.
\end{equation}
Furthermore, since elements of $\mathcal H_m(\gamma)$ are eigenfunctions of the Ornstein--Uhlenbeck operator $L$ with eigenvalue $-m$, we have
\[
\|Lf_j\|_{L^2(\gamma)}\le d\|f_j\|_{L^2(\gamma)}.
\]
Applying \eqref{eq-hyper} again, we obtain
\begin{equation}\label{eq-L}
\|Lf_j\|_{L^p(\gamma)}
\le
(p-1)^{d/2}\|Lf_j\|_{L^2(\gamma)}
\le
d(p-1)^{d/2}\|f_j\|_{L^2(\gamma)}.
\end{equation}
Therefore, 
\begin{equation}\label{eq-Sobolev-est}
\|f_j\|_{\dot W^{2,p}(\gamma)}
\le 3d (p-1)^{d/2}\|f_j\|_{L^2(\gamma)}\le
3d (p-1)^{d/2}\sqrt b.
\end{equation}

Now, since 
$\Delta_f\in \mathcal{P}_{2k(d-1)}(\gamma)$,
the Carbery--Wright inequality (see \cite{CW01}) implies that
\begin{equation}\label{eq:CW}
\gamma(\Delta_f\le \varepsilon)\le Ck(d-1)\, a^{-\frac{1}{2k(d-1)}}\varepsilon^{\frac{1}{2k(d-1)}}
\quad \forall\, \varepsilon>0,
\end{equation}
for some absolute constant $C\ge 1$.
Applying Theorem~\ref{t1.1}, we obtain
\[
\sigma_f(t)\le
512 k^{\alpha_p} (1-\alpha_p)^{-\alpha_p}
\bigl( Ck(d-1)\, a^{-\frac{1}{2k(d-1)}}\bigr)^{\alpha_p/\varkappa}
\bigl(3d (p-1)^{d/2}\sqrt b\bigr)^{(2k-1)\alpha_p}\cdot t^{\alpha_p}
\quad \forall\, t>0,
\]
where
\[
\varkappa:=\frac{1}{2k(d-1)}
\quad\text{and}\quad
\alpha_p:=\frac{p\varkappa}{p+\varkappa(2k-1)}
=
\frac{1}{2k(d-1)+(2k-1)p^{-1}}.
\]
Since $\alpha_p\le \varkappa\le 1/2$, we have
\[
(1-\alpha_p)^{-\alpha_p}\le \sqrt2.
\]
Moreover,
\[
\bigl( Ck(d-1)\, a^{-\frac{1}{2k(d-1)}}\bigr)^{\alpha_p/\varkappa}
=
\bigl(Ck(d-1)\bigr)^{\alpha_p/\varkappa}a^{-\alpha_p}
\le Ckda^{-\alpha_p},
\]
because $\alpha_p/\varkappa\le 1$ and $Ck(d-1)\ge 1$.
Also,
\[
(2k-1)\alpha_p\le (2k-1)\varkappa\le\frac{1}{d-1}\le \frac2d
\quad\text{and}\quad
\alpha_p\le\varkappa\le \frac{1}{2k},
\]
which imply that
\[
(3d)^{(2k-1)\alpha_p}\le (3d)^{2/d}\le 12,
\quad
(p-1)^{\frac{d(2k-1)}{2}\alpha_p}\le p^{\frac{d(2k-1)}{2}\varkappa},
\quad
k^{\alpha_p}\le 2.
\]
Therefore,
\[
\sigma_f(t)\le
C_1kd\,p^{\frac{d(2k-1)}{2}\varkappa}
\bigl(a^{-1}b^{\frac{2k-1}{2}}t\bigr)^{\alpha_p},
\]
where $C_1\ge1$ is an absolute constant.

We note that
\[
\varkappa - \alpha_p
=
\frac{(2k-1)p^{-1}}{2k(d-1)\bigl(2k(d-1)+(2k-1)p^{-1}\bigr)}
\le p^{-1}.
\]
Set
\[
A := a^{-1}b^{\frac{2k-1}{2}}t
\quad\text{and}\quad
s := \frac{d(2k-1)}{2}\varkappa=\frac{d(2k-1)}{4k(d-1)}.
\]
Fix $\alpha \in (0,\varkappa)$.

If $A \le 1$, choose
\[
p = \frac{1}{\varkappa - \alpha}
\ge \frac{1}{\varkappa}
= 2k(d-1)
\ge 2k.
\]
Then
\[
\varkappa - \alpha_p \le p^{-1}= \varkappa - \alpha,
\]
that is,
$\alpha \le \alpha_p$.
Therefore,
\[
\sigma_f(t)
\le
C_1kd\,p^sA^{\alpha_p}
\le
C_1kd(\varkappa-\alpha)^{-s}A^\alpha.
\]

If $A \ge 1$, then for every $\alpha \in (0,\varkappa)$,
\[
\sigma_f(t)\le 1 \le C_1kd(\varkappa-\alpha)^{-s}A^\alpha.
\]
Hence, for all $t>0$ and all $\alpha \in (0,\varkappa)$,
\[
\sigma_f(t)
\le
C_1kd(\varkappa-\alpha)^{-s}
\bigl(a^{-1}b^{\frac{2k-1}{2}}t\bigr)^\alpha.
\]

Next, assume that
$
A < e^{-1/\varkappa}.
$
Then
\[
\alpha = \varkappa - \frac{1}{\ln A^{-1}} \in (0,\varkappa),
\]
and therefore
\[
\sigma_f(t)
\le
C_1kd(\ln A^{-1})^sA^{\varkappa-\frac{1}{\ln A^{-1}}}
=
eC_1kd(\ln A^{-1})^sA^\varkappa.
\]
If
$A \ge e^{-1/\varkappa}$,
then
\[
\sigma_f(t)\le 1 \le eC_1kd\,A^\varkappa
\le
eC_1kd\bigl(1+|\ln A^{-1}|\bigr)^sA^\varkappa.
\]
Thus, for all $t>0$,
\[
\sigma_f(t)
\le
eC_1kd
\bigl(1+\bigl|\ln\bigl(a^{-1}b^{\frac{2k-1}{2}}t\bigr)\bigr|\bigr)^s
\bigl(a^{-1}b^{\frac{2k-1}{2}}t\bigr)^\varkappa,
\]
which completes the proof.
\end{proof}

\section{Bounds in terms of Kantorovich-type distances}
\label{sec:KR}

\begin{theorem}\label{t2.1}
Let $k\in\mathbb{N}$, let $p\ge 2k$, let
\[
q:=\frac{p}{p-2k+1},
\]
let $\varkappa\in(0,q)$, and let $a,b>0$. For $r\in\mathbb{N}$, set
\[
\beta_r:=
\frac{p\varkappa}{p(r+\varkappa)+\varkappa r(2k-1)}.
\]
Then, for every pair of mappings $f=(f_1,\ldots,f_k)$ and $g=(g_1,\ldots,g_k)$ with
$f_j,g_j\in W^{2,p}(\gamma)$,
\[
\max_{1\le j\le k}\|f_j\|_{\dot W^{2,p}(\gamma)}\le b,
\quad
\max_{1\le j\le k}\|g_j\|_{\dot W^{2,p}(\gamma)}\le b,
\]
and
\[
\gamma(\Delta_f\le \varepsilon)\le a\varepsilon^\varkappa,
\quad
\gamma(\Delta_g\le \varepsilon)\le a\varepsilon^\varkappa
\quad \forall\, \varepsilon>0,
\]
one has
\[
d_{\mathrm{TV}}(f,g)
\le
2^{14}k^{7/2}(q-\varkappa)^{-1}(a+b^p+1)\,
d_{\mathrm{KR}}(f,g)^{\beta_1}.
\]
More generally, for every $r\in\mathbb{N}$,
\[
d_{\mathrm{TV}}(f,g)
\le
C(r,k)(q-\varkappa)^{-1}(a+b^p+1)\,
d_r(f,g)^{\beta_r},
\]
where
$C(r,k)>0$ depends only on $r$ and $k$.
\end{theorem}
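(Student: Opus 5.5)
The plan is to combine the regularity estimate of Theorem~\ref{t1.1} with the smoothing inequality \eqref{eq-TV-bound}, and then optimize in $t$.

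\textbf{Regularity.} First apply Theorem~\ref{t1.1} to both $f$ and $g$. For all $t>0$ this gives
\[
\max\{\sigma_f(t),\sigma_g(t)\}\le K t^{\alpha},\qquad
\alpha=\frac{p\varkappa}{p+\varkappa(2k-1)},\qquad
K=512k^\alpha(1-\alpha)^{-\alpha}a^{\alpha/\varkappa}b^{(2k-1)\alpha}.
\]
Write the estimate with the explicit constant from the proof of Theorem~\ref{t1.1}, i.e.\ keep $(q-\varkappa)^{-\alpha/q}$ in place of $(1-\alpha)^{-\alpha}$. Since $\alpha\le 1$ and $q\ge1$, we have $(q-\varkappa)^{-\alpha/q}\le \max\{1,(q-\varkappa)^{-1}\}$.

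To get a constant that is linear in $a$ and $b^p$, bound each power crudely:
\[
a^{\alpha/\varkappa}\le a+1,\qquad b^{(2k-1)\alpha}\le b^p+1.
\]
The first holds because $\alpha/\varkappa\le1$. The second holds because $(2k-1)\alpha\le (2k-1)\varkappa\cdot\frac{p}{p+\varkappa(2k-1)}<p$. The product $(a+1)(b^p+1)$ is not linear, so the product structure needs care. One option is to use only one of the factors depending on which of $a,b$ is large. Another option, which I would try first, is the following. Since $\sigma_f\le 1$ trivially, we may assume $Kt^\alpha<1$ in the relevant range. Then the bound $\min\{1,Kt^\alpha\}$ can be weakened appropriately, using $xy\le x^{s}+y^{s'}$-type Young inequalities with exponents $\alpha/\varkappa+(2k-1)\alpha/p\cdot\ldots$. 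The key point is that the total exponent is
\[
\frac{\alpha}{\varkappa}\cdot 1+\frac{(2k-1)\alpha}{p}=\alpha\Bigl(\frac1\varkappa+\frac{2k-1}{p}\Bigr)=1,
\]
so Young's inequality gives $a^{\alpha/\varkappa}(b^{p})^{(2k-1)\alpha/p}\le a+b^p$ exactly. This identity is what makes the clean factor $(a+b^p+1)$ appear.

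\textbf{Smoothing.} Next use \eqref{eq-TV-bound}, or \eqref{eq:intro-TV-est} for $r=1$, which bounds $d_{\rm TV}(f,g)$ through $d_{\rm KR}$ or $d_r$. For $t\in(0,1]$,
\[
d_{\rm TV}(f,g)\le 4\sqrt{k}\,K t^\alpha + C_r(k)\,t^{-r}d_r(f,g).
\]
Choose $t=d_r(f,g)^{1/(r+\alpha)}$. This satisfies $t\le1$ because $d_r\le 2$ (if necessary, rescale by a constant, or note that $d_{\rm TV}\le 2$ trivially otherwise). With this choice,
\[
d_{\rm TV}(f,g)\le (4\sqrt kK+C_r(k))\,d_r^{\alpha/(r+\alpha)}.
\]
Finally, compute the exponent:
\[
\frac{\alpha}{r+\alpha}=\frac{p\varkappa}{r(p+\varkappa(2k-1))+p\varkappa}=\beta_r.
\]
This matches the statement.

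\textbf{Constants.} For $r=1$ use \eqref{eq:intro-TV-est} with constants $6\sqrt k$ and $\sqrt k$. This yields $6\sqrt k\cdot 512\cdot k\cdot 2\,(q-\varkappa)^{-1}(a+b^p+1)+\sqrt k$, which fits under $2^{14}k^{7/2}$. The main obstacle is this bookkeeping: turning $K$ into $(q-\varkappa)^{-1}(a+b^p+1)$ through the Young-inequality identity above, and handling the case $q-\varkappa>1$ via $\max\{1,(q-\varkappa)^{-1}\}\le (q-\varkappa)^{-1}+1$. If necessary, absorb the extra term into a constant using $q-\varkappa\le q\le 2k$, so that $1\le 2k(q-\varkappa)^{-1}$. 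This is where the extra power of $k$ comes from.
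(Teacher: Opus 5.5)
Your proposal is correct and follows the paper's proof. Both apply Theorem~\ref{t1.1} to $f$ and $g$, use the smoothing inequality \eqref{eq:intro-TV-est} (resp.\ \eqref{eq-TV-bound}) with $t=(\tfrac12 d_r(f,g))^{1/(r+\alpha)}$, and use the weighted Young inequality $a^{\alpha/\varkappa}b^{(2k-1)\alpha}\le a+b^p$, which rests on $\alpha/\varkappa+(2k-1)\alpha/p=1$. The only difference is in the constant bookkeeping: you take the intermediate factor $(q-\varkappa)^{-\alpha/q}$ from inside the proof of Theorem~\ref{t1.1} and use $q\le 2k$, whereas the paper starts from the stated $(1-\alpha)^{-\alpha}$ and bounds it by $(1-\alpha)^{-1}\le q^2(q-\varkappa)^{-1}\le 4k^2(q-\varkappa)^{-1}$; both fit under $2^{14}k^{7/2}$.
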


\begin{proof}
Let
\[
\alpha:=
\frac{p\varkappa}{p+\varkappa(2k-1)}.
\]
By Theorem \ref{t1.1}, applied to $f$ and $g$, we have
\[
\sigma_f(t)\le Mt^\alpha
\quad \text{and} \quad
\sigma_g(t)\le Mt^\alpha
\quad \forall\, t>0,
\]
where
\[
M:=512\,k^\alpha(1-\alpha)^{-\alpha}
a^{\alpha/\varkappa}b^{(2k-1)\alpha}.
\]
Hence, by \eqref{eq:intro-TV-est}, for every $t\in(0,1]$,
\[
d_{\mathrm{TV}}(f,g)
\le
6\sqrt{k}\,Mt^\alpha+\sqrt{k}\,t^{-1}d_{\mathrm{KR}}(f,g).
\]
Since $d_{\mathrm{KR}}(f,g)\le 2$, we may take
\[
t=\bigl(\tfrac12 d_{\mathrm{KR}}(f,g)\bigr)^{\frac{1}{1+\alpha}}.
\]
Then
\[
d_{\mathrm{TV}}(f,g)
\le
(6\sqrt{k}\,M+2\sqrt{k})
d_{\mathrm{KR}}(f,g)^{\frac{\alpha}{1+\alpha}}.
\]

Since $\alpha\le 1$,
\begin{align}\label{eq-alpha-est}
(1-\alpha)^{-\alpha}
&\le
(1-\alpha)^{-1}
=
\frac{p+\varkappa(2k-1)}{p-\varkappa(p-2k+1)}
\le
\frac{p+\frac{p(2k-1)}{p-2k+1}}{p-\varkappa(p-2k+1)}\\
&=
\bigl(\tfrac{p}{p-2k+1}\bigr)^2
\bigl(\tfrac{p}{p-2k+1}-\varkappa\bigr)^{-1}
\le
4k^2
\bigl(\tfrac{p}{p-2k+1}-\varkappa\bigr)^{-1}
=4k^2
(q-\varkappa)^{-1}.
\nonumber
\end{align}
Moreover, by Young's inequality,
\begin{equation}\label{eq-Young}
	a^{\alpha/\varkappa}b^{(2k-1)\alpha}\le a+b^p.
\end{equation}
Therefore,
\[
6\sqrt{k}\,M
\le
2^{14}k^{7/2}
(q-\varkappa)^{-1}
(a+b^p).
\]
Since
\[
4k^2(q-\varkappa)^{-1}\ge 1,
\]
we also have
\[
2\sqrt{k}
\le
2^{14}k^{7/2}
(q-\varkappa)^{-1}.
\]
Hence
\[
6\sqrt{k}\,M+2\sqrt{k}
\le
2^{14}k^{7/2}
(q-\varkappa)^{-1}
(a+b^p+1),
\]
and since $\frac{\alpha}{1+\alpha}=\beta$, we obtain
\[
d_{\mathrm{TV}}(f,g)
\le
2^{14}k^{7/2}
(q-\varkappa)^{-1}
(a+b^p+1)\,
d_{\mathrm{KR}}(f,g)^\beta.
\]

For the estimate in terms of $d_r$, by \eqref{eq-TV-bound}, for every $t\in(0,1]$,
\[
d_{\mathrm{TV}}(f,g)
\le
4\sqrt{k}\,Mt^\alpha+C_r(k)t^{-r}d_r(f,g).
\]
Since $d_r(f,g)\le 2$, we may take
\[
t=\bigl(\tfrac12 d_r(f,g)\bigr)^{\frac{1}{r+\alpha}}.
\]
Then
\[
d_{\mathrm{TV}}(f,g)
\le
\bigl(4\sqrt{k}\,M+2C_r(k)\bigr)d_r(f,g)^{\beta_r}.
\]
Using \eqref{eq-alpha-est} and \eqref{eq-Young}, we obtain
\[
d_{\mathrm{TV}}(f,g)
\le
C(r,k)(q-\varkappa)^{-1}(a+b^p+1)\,
d_r(f,g)^{\beta_r},
\]
which completes the proof.
\end{proof}

\begin{theorem}\label{t2.2}
Let $k,d\in\mathbb{N}$ with $d\ge 2$, and let $a,b>0$. Set
\[
\beta_1:=\frac{1}{2k(d-1)+1}
\quad\text{and}\quad
s:=\frac{d(2k-1)}{4k(d-1)}.
\]
Then, for every pair of mappings $f=(f_1,\ldots,f_k)$ and $g=(g_1,\ldots,g_k)$ with
$f_j,g_j\in \mathcal{P}_d(\gamma)$,
\[
\int_E \Delta_f\,d\gamma\ge a,
\quad
\int_E \Delta_g\,d\gamma\ge a,
\]
and
\[
\max_{1\le j\le k}\operatorname{Var}_\gamma(f_j)\le b,
\quad
\max_{1\le j\le k}\operatorname{Var}_\gamma(g_j)\le b,
\]
one has, for every $\beta\in(0,\beta_1)$,
\begin{equation}\label{est-1}
d_{\mathrm{TV}}(f,g)
\le
C_1k^{3/2}d\,(\beta_1-\beta)^{-s}
\bigl(1+a^{-\frac{1}{2k(d-1)}}b^{\frac{2k-1}{4k(d-1)}}\bigr)\,
d_{\mathrm{KR}}(f,g)^\beta,
\end{equation}
where $C_1>0$ is an absolute constant. In particular, if $d_{\mathrm{KR}}(f,g)>0$, then
\begin{equation}\label{est-2}
d_{\mathrm{TV}}(f,g)
\le
C_2k^{3/2}d
\bigl(1+a^{-\frac{1}{2k(d-1)}}b^{\frac{2k-1}{4k(d-1)}}\bigr)
\bigl(1+\ln\tfrac{2}{d_{\mathrm{KR}}(f,g)}\bigr)^s
d_{\mathrm{KR}}(f,g)^{\beta_1},
\end{equation}
where $C_2>0$ is an absolute constant.

More generally, for every $r\in\mathbb{N}$, setting
\[
\beta_r:=\frac{1}{2rk(d-1)+1},
\]
one has, for every $\beta\in(0,\beta_r)$,
\begin{equation}\label{est-3}
d_{\mathrm{TV}}(f,g)
\le
C_1(r,k)\,d\,(\beta_r-\beta)^{-s}
\bigl(1+a^{-\frac{1}{2k(d-1)}}b^{\frac{2k-1}{4k(d-1)}}\bigr)\,
d_r(f,g)^\beta,
\end{equation}
where $C_1(r,k)>0$ depends only on $r$ and $k$. Moreover, if $d_r(f,g)>0$, then
\begin{equation}\label{est-4}
d_{\mathrm{TV}}(f,g)
\le
C_2(r,k)\,d
\bigl(1+a^{-\frac{1}{2k(d-1)}}b^{\frac{2k-1}{4k(d-1)}}\bigr)
\bigl(1+\ln\tfrac{2}{d_r(f,g)}\bigr)^s
d_r(f,g)^{\beta_r},
\end{equation}
where $C_2(r,k)>0$ depends only on $r$ and $k$.
\end{theorem}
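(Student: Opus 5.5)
Both $f$ and $g$ satisfy the hypotheses of Theorem~\ref{t1.2} with the same $a,b$. So the plan is to feed that regularity bound into the smoothing inequalities \eqref{eq:intro-TV-est} and \eqref{eq-TV-bound} and optimize in $t$, rather than going through Theorem~\ref{t2.1}, which would require tracking $b^p$ in $p$.

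Set $\varkappa=\frac1{2k(d-1)}$ and $A_0:=a^{-1}b^{\frac{2k-1}{2}}$. Given $\beta\in(0,\beta_1)$, define $\alpha\in(0,\varkappa)$ by $\beta=\frac{\alpha}{1+\alpha}$, that is, $\alpha=\frac{\beta}{1-\beta}$. Since $\beta_1=\frac{\varkappa}{1+\varkappa}$ and the map $x\mapsto x/(1-x)$ has derivative at least $1$, we get $\varkappa-\alpha\ge\beta_1-\beta$. Theorem~\ref{t1.2} then gives
\[
\max\{\sigma_f(t),\sigma_g(t)\}\le C_1kd(\beta_1-\beta)^{-s}(A_0t)^\alpha .
\]
Insert this into \eqref{eq:intro-TV-est}:
\[
d_{\rm TV}(f,g)\le 6\sqrt k\,C_1kd(\beta_1-\beta)^{-s}A_0^\alpha t^\alpha+\sqrt k\,t^{-1}d_{\rm KR}(f,g),
\]
and take $t=(\tfrac12 d_{\rm KR})^{1/(1+\alpha)}\le1$, exactly as in the proof of Theorem~\ref{t2.1}. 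This yields the factor $d_{\rm KR}^{\alpha/(1+\alpha)}=d_{\rm KR}^\beta$ with constant $\lesssim k^{3/2}d(\beta_1-\beta)^{-s}(1+A_0^\alpha)$.

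Next we replace $A_0^\alpha$ by the stated quantity. Since $\alpha<\varkappa$, we have $A_0^\alpha\le 1+A_0^\varkappa$, and
\[
A_0^\varkappa=a^{-\frac1{2k(d-1)}}b^{\frac{2k-1}{4k(d-1)}}.
\]
This gives \eqref{est-1}.

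For \eqref{est-2}, set $\delta:=d_{\rm KR}/2\le1$ and choose $\beta=\beta_1-\frac{1}{1+\ln(2/d_{\rm KR})}$. If this choice is positive, then
\[
d_{\rm KR}^{\beta}=d_{\rm KR}^{\beta_1}d_{\rm KR}^{-1/(1+\ln(2/d_{\rm KR}))}\le e\,2^{\,\cdot}\,d_{\rm KR}^{\beta_1},
\]
because $x^{-1/(1+\ln(2/x))}$ is bounded for $x\in(0,2]$. Also $(\beta_1-\beta)^{-s}=(1+\ln\frac2{d_{\rm KR}})^s$. If the choice is not positive, then $\ln(2/d_{\rm KR})\lesssim 1/\beta_1$, so $d_{\rm KR}^{\beta_1}$ is bounded below by a constant. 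In that case the trivial bound $d_{\rm TV}\le2$ suffices once the absolute constant is enlarged; note that $(1+\ln)^s\ge1$ and $k^{3/2}d\ge1$.

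The $d_r$ statements \eqref{est-3} and \eqref{est-4} follow in the same way from \eqref{eq-TV-bound}. Take $t=(\tfrac12d_r)^{1/(r+\alpha)}$ and $\beta=\frac{\alpha}{r+\alpha}$, so that $\alpha=\frac{r\beta}{1-\beta}$ and $\beta_r=\frac{\varkappa}{r+\varkappa}$. Then
\[
\varkappa-\alpha=r\Bigl(\frac{\beta_r}{1-\beta_r}-\frac{\beta}{1-\beta}\Bigr)\ge r(\beta_r-\beta)\ge \beta_r-\beta .
\]
The main obstacle is only bookkeeping: translating $\varkappa-\alpha$ into $\beta_r-\beta$ via this monotonicity, and handling the endpoint logarithmic case where the chosen $\beta$ would be nonpositive. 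The constants are then absorbed into $C_1(r,k)$ and $C_2(r,k)$, with $C_r(k)$ from \eqref{eq-TV-bound} bounded by $C(r,k)\ge1$.
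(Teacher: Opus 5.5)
Your proposal is correct and follows the paper's proof essentially step by step. Both arguments feed Theorem~\ref{t1.2} into the smoothing inequalities with $t=(\tfrac12 d_{\rm KR})^{1/(1+\alpha)}$ (resp.\ $t=(\tfrac12 d_r)^{1/(r+\alpha)}$), use the comparisons $\varkappa-\alpha\ge\beta_1-\beta$ (resp.\ $\varkappa-\alpha\ge\beta_r-\beta$) and $B^\alpha\le 1+B^\varkappa$, and finish with a logarithmic choice of $\beta$ plus the trivial bound $d_{\rm TV}\le 2$ in the complementary regime; the only differences are cosmetic, namely your choice $\beta=\beta_1-1/(1+\ln(2/d_{\rm KR}))$ instead of $\beta_1-1/\ln(2/d_{\rm KR})$, and your derivative argument for $x\mapsto x/(1-x)$ in place of the paper's explicit identity for $\beta_1-\beta$.
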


\begin{proof}
Set
\[
\varkappa:=\frac{1}{2k(d-1)}
\quad\text{and}\quad
B:=a^{-1}b^{\frac{2k-1}{2}}.
\]
Then
\[
\beta_1=\frac{\varkappa}{1+\varkappa}.
\]
By Theorem \ref{t1.2}, for every $\alpha\in(0,\varkappa)$ and every $t>0$,
\[
\sigma_f(t)\le M_\alpha t^\alpha
\quad\text{and}\quad
\sigma_g(t)\le M_\alpha t^\alpha,
\]
where
\[
M_\alpha:=Ckd(\varkappa-\alpha)^{-s}B^\alpha
\]
and $C>0$ is an absolute constant. Applying \eqref{eq:intro-TV-est} with
\[
t=\bigl(\tfrac12 d_{\mathrm{KR}}(f,g)\bigr)^{\frac{1}{1+\alpha}},
\]
we obtain
\[
d_{\mathrm{TV}}(f,g)
\le
\bigl(6\sqrt{k}\,M_\alpha+2\sqrt{k}\bigr)
d_{\mathrm{KR}}(f,g)^{\frac{\alpha}{1+\alpha}}.
\]
Now let $\beta\in(0,\beta_1)$ and choose $\alpha\in(0,\varkappa)$ so that
\[
\beta=\frac{\alpha}{1+\alpha}.
\]
Since
\[
\beta_1-\beta=\frac{\varkappa-\alpha}{(1+\varkappa)(1+\alpha)}\le \varkappa-\alpha,
\]
we have
\[
(\varkappa-\alpha)^{-s}\le (\beta_1-\beta)^{-s}.
\]
Moreover,
\begin{equation}\label{eq-est-B}
B^\alpha\le 1+B^\varkappa.
\end{equation}
Therefore,
\[
6\sqrt{k}\,M_\alpha+2\sqrt{k}
\le
(6C+2)\,k^{3/2}d\,(\beta_1-\beta)^{-s}(1+B^\varkappa),
\]
and hence
\begin{equation}\label{eq-first-est}
d_{\mathrm{TV}}(f,g)
\le
C_1k^{3/2}d\,(\beta_1-\beta)^{-s}(1+B^\varkappa)
d_{\mathrm{KR}}(f,g)^\beta,
\end{equation}
where $C_1:=6C+2\ge2$.
This proves \eqref{est-1}.

To prove \eqref{est-2}, assume first that
\[
d_{\mathrm{KR}}(f,g)<2e^{-1/\beta_1}.
\]
Then we may take
\[
\beta=\beta_1-\frac{1}{\ln\bigl(2/d_{\mathrm{KR}}(f,g)\bigr)}.
\]
By \eqref{eq-first-est},
\[
d_{\mathrm{TV}}(f,g)
\le
C_1k^{3/2}d
\bigl(\ln\tfrac{2}{d_{\mathrm{KR}}(f,g)}\bigr)^s
(1+B^\varkappa)
d_{\mathrm{KR}}(f,g)^{\beta_1-\frac{1}{\ln(2/d_{\mathrm{KR}}(f,g))}}.
\]
Since
\[
d_{\mathrm{KR}}(f,g)^{-\frac{1}{\ln(2/d_{\mathrm{KR}}(f,g))}}\le e,
\]
it follows that
\[
d_{\mathrm{TV}}(f,g)
\le
eC_1k^{3/2}d
\bigl(\ln\tfrac{2}{d_{\mathrm{KR}}(f,g)}\bigr)^s
(1+B^\varkappa)
d_{\mathrm{KR}}(f,g)^{\beta_1}.
\]
	
If
\[
d_{\mathrm{KR}}(f,g)\ge 2e^{-1/\beta_1},
\]
then
\[
d_{\mathrm{KR}}(f,g)^{\beta_1}\ge 2^{\beta_1}e^{-1}\ge e^{-1},
\]
and therefore
\[
d_{\mathrm{TV}}(f,g)
\le 2 \le 2e\,d_{\mathrm{KR}}(f,g)^{\beta_1}
\le
eC_1k^{3/2}d
\bigl(1+\ln\tfrac{2}{d_{\mathrm{KR}}(f,g)}\bigr)^s
(1+B^\varkappa)
d_{\mathrm{KR}}(f,g)^{\beta_1}.
\]
This proves the estimate \eqref{est-2} with $C_2=eC_1$.
	
For the estimate \eqref{est-3} and \eqref{est-4}, we argue in the same way, applying \eqref{eq-TV-bound} with
\[
t=\bigl(\tfrac12 d_r(f,g)\bigr)^{\frac{1}{r+\alpha}},
\]
which yields
\[
d_{\mathrm{TV}}(f,g)
\le
\bigl(4\sqrt{k}\,M_\alpha+2C_r(k)\bigr)d_r(f,g)^{\frac{\alpha}{r+\alpha}}.
\]
Now let $\beta\in(0,\beta_r)$ and choose $\alpha\in(0,\varkappa)$ so that
\[
\beta=\frac{\alpha}{r+\alpha}.
\]
Since
\[
\beta_r-\beta=\frac{r(\varkappa-\alpha)}{(r+\varkappa)(r+\alpha)}\le \varkappa-\alpha,
\]
we have
\[
(\varkappa-\alpha)^{-s}\le (\beta_r-\beta)^{-s}.
\]
Using \eqref{eq-est-B}, we obtain
\[
d_{\mathrm{TV}}(f,g)
\le
C_3(r,k)\,d\,(\beta_r-\beta)^{-s}(1+B^\varkappa)d_r(f,g)^\beta
\]
for some constant $C_3(r,k)>0$ depending only on $r$ and $k$.
	
Finally, if
\[
d_r(f,g)<2e^{-1/\beta_r},
\]
we take
\[
\beta=\beta_r-\frac{1}{\ln\bigl(2/d_r(f,g)\bigr)}
\]
and argue as above. If
\[
d_r(f,g)\ge 2e^{-1/\beta_r},
\]
then
\[
d_r(f,g)^{\beta_r}\ge e^{-1}.
\]
Thus, in both cases,
\[
d_{\mathrm{TV}}(f,g)
\le
C_4(r,k)\,d
\bigl(1+\ln\tfrac{2}{d_r(f,g)}\bigr)^s
(1+B^\varkappa)d_r(f,g)^{\beta_r}
\]
for some constant $C_4(r,k)>0$ depending only on $r$ and $k$.
This completes the proof.
\end{proof}

\section{Bounds in terms of Sobolev and $L^2$ distances}
\label{sec:L2}

\begin{lemma}\label{lem6.0}
Let $p\ge 2k$, and let
\[
f=(f_1,\ldots,f_k)
\quad\text{and}\quad
g=(g_1,\ldots,g_k)
\]
be such that
$
f_j,g_j\in C_b^\infty(\mathbb R^n),
$
and
\[
\max_{1\le j\le k}\|\nabla f_j\|_{L^p(\gamma_n)}\le b
\quad\text{and}\quad
\max_{1\le j\le k}\|\nabla g_j\|_{L^p(\gamma_n)}\le b.
\]
Then
\[
\Delta_f\le \prod_{j=1}^k|\nabla f_j|^2,
\]
and
\[
\|\Delta_f-\Delta_g\|_{L^{\frac{p}{2k}}(\gamma_n)}
\le
4k\,b^{2k-1}
\max_{1\le j\le k}\|\nabla f_j-\nabla g_j\|_{L^p(\gamma_n)}.
\]
\end{lemma}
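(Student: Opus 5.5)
The first claim is Hadamard's inequality. $M_f=J_fJ_f^*$ is the Gram matrix of the vectors $\nabla f_1,\ldots,\nabla f_k$. For a positive semidefinite matrix the determinant is at most the product of the diagonal entries. Hence $\Delta_f\le\prod_{j}(M_f)_{jj}=\prod_j|\nabla f_j|^2$ pointwise. Equivalently, $\Delta_f$ is the squared $k$-volume of the parallelepiped spanned by the gradients, and that volume is bounded by the product of the edge lengths.

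For the second claim I will interpolate along the segment $f^{(s)}:=g+s(f-g)$, $s\in[0,1]$, and write
\[
\Delta_f-\Delta_g=\int_0^1\frac{d}{ds}\det M_{f^{(s)}}\,ds.
\]
As in the computation leading to \eqref{eq-Delta-deriv}, only with the derivative now taken in $s$, we have $\frac{d}{ds}M_{f^{(s)}}=K J_s^*+J_sK^*$. Here $J_s=J_{f^{(s)}}$ and $K=J_f-J_g$. This gives
\[
\frac{d}{ds}\det M_{f^{(s)}}=2\operatorname{tr}\bigl((A_sJ_s)^*K\bigr),
\]
with $A_s=\operatorname{adj}M_{f^{(s)}}$. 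Therefore
\[
|\Delta_f-\Delta_g|\le 2\int_0^1\|A_sJ_s\|_{\rm HS}\,\|K\|_{\rm HS}\,ds.
\]
I will not use Lemma~\ref{lem1.1} here, since it keeps a factor $\Delta^{1/2}$ that we do not need. A cruder bound suffices. One has $\|A_sJ_s\|_{\rm HS}^2=\Delta_{f^{(s)}}\operatorname{tr}A_s$. Combining Hadamard's inequality with Maclaurin's inequality, or using AM--GM on the eigenvalues, this is at most a dimensional constant times $(\operatorname{tr}M_{f^{(s)}})^{2k-1}$. So pointwise
\[
\|A_sJ_s\|_{\rm HS}\le c_k\Bigl(\sum_j|\nabla f^{(s)}_j|^2\Bigr)^{\frac{2k-1}{2}}.
\]
We also have $\|K\|_{\rm HS}=(\sum_j|\nabla f_j-\nabla g_j|^2)^{1/2}$.

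Next comes the $L^{p/(2k)}$ estimate. I apply Minkowski's integral inequality in $s$, which is valid since $p/(2k)\ge1$. Then Hölder with exponents $\frac{2k}{p}=\frac{2k-1}{p}+\frac1p$ reduces the problem to a Lemma~\ref{lem0}-type computation:
\[
\Bigl\|\Bigl(\sum_j|\nabla f^{(s)}_j|^2\Bigr)^{\frac{2k-1}{2}}\Bigr\|_{L^{p/(2k-1)}}\le\Bigl(\sum_j\|\nabla f^{(s)}_j\|_{L^p}^2\Bigr)^{\frac{2k-1}{2}}\le k^{\frac{2k-1}{2}}b^{2k-1},
\]
using $\|\nabla f^{(s)}_j\|_{L^p}\le(1-s)\|\nabla g_j\|_{L^p}+s\|\nabla f_j\|_{L^p}\le b$. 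Similarly $\|\,\|K\|_{\rm HS}\|_{L^p}\le\sqrt k\max_j\|\nabla f_j-\nabla g_j\|_{L^p}$.

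The main obstacle is getting the constant exactly $4k$; the crude bound above produces $k^{k}$-type factors. To fix this I would replace the interpolation by a telescoping over columns. Write $\det M=\det(J J^*)$ through the Cauchy--Binet or Gram-volume structure and replace $\nabla f_j$ by $\nabla g_j$ one index at a time. Each step changes the squared volume by an amount bounded, via Hadamard, by
\[
\bigl(|\nabla f_j|+|\nabla g_j|\bigr)\,|\nabla f_j-\nabla g_j|\prod_{i\ne j}\max\bigl(|\nabla f_i|,|\nabla g_i|\bigr)^2.
\]
This works because $\mathrm{vol}^2$ is a quadratic form in each vector separately, bounded by the product of the squared lengths of the other vectors. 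Summing the $k$ terms and applying Hölder with $k-1$ factors in $L^p$ squared, one factor in $L^p$, and the difference in $L^p$ gives $k\cdot 2\cdot 2\,b^{2k-1}$ times the difference, that is, $4k\,b^{2k-1}$. This matches the claimed constant, so the telescoping route is the one I would carry out in full.
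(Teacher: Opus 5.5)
Your proposal is correct, and the telescoping route you settle on is in substance the paper's proof. You swap $\nabla f_j$ for $\nabla g_j$ one index at a time, control the remaining vectors by Hadamard's inequality, and conclude with H\"older via $\frac{2k}{p}=\frac1p+\frac1p+\frac{2(k-1)}{p}$. Only the per-step bound differs. The paper goes through the mixed determinants $\det(J_aJ_b^*)$, which are multilinear in the rows of $J_b$, and obtains $2k$ terms, each bounded by Cauchy--Binet, Cauchy--Schwarz and Hadamard. You write the Gram determinant as $V\,|P^\perp u_j|^2$ in the $j$th vector and polarize, which gives $k$ terms with the factor $|\nabla f_j|+|\nabla g_j|$. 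The resulting bounds coincide.

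Two remarks on constants. First, the second factor $2$ in your $k\cdot2\cdot2$ is not justified as written. It needs $\|\max\{|\nabla f_i|,|\nabla g_i|\}\|_{L^p}\le 2^{1/p}b$ together with $2^{2(k-1)/p}\le 2$, which uses $p\ge 2k$. It disappears altogether if you keep the actual vectors $\nabla g_i$ ($i<j$) and $\nabla f_i$ ($i>j$) instead of the maxima, and then you get $2k\,b^{2k-1}$.

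Second, your interpolation route does not lose $k^k$-type factors. AM--GM and Maclaurin give $\|A_sJ_s\|_{\rm HS}\le k^{1-k}(\operatorname{tr}M_{f^{(s)}})^{\frac{2k-1}{2}}$, and $k^{1-k}\cdot k^{k-\frac12}\cdot k^{\frac12}=k$. So that argument already yields $2k\,b^{2k-1}$ and is a perfectly good alternative proof.
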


\begin{proof}
For a mapping $f=(f_1,\ldots,f_k)$, set
\[
J_f^*:=(\nabla f_1,\ldots,\nabla f_k),
\]
so that
\[
\Delta_f=\det(J_fJ_f^*).
\]
We note that $\sqrt{\det(J_fJ_f^*)}$ is the volume of the parallelepiped spanned by the columns of $J_f^*$. Therefore,
\begin{equation}\label{eq-Hadamard}
	\sqrt{\det(J_fJ_f^*)}\le \prod_{j=1}^k |\nabla f_j|,
\end{equation}
which is equivalent to the first claimed estimate.

To prove the second estimate,
for $i=0,\ldots,k$, we set
\[
w^i:=(g_1,\ldots,g_i,f_{i+1},\ldots,f_k),
\]
so that $w^0=f$ and $w^k=g$, and
\[
v^i:=(g_1,\ldots,g_{i-1},f_i-g_i,f_{i+1},\ldots,f_k).
\]
Then
\begin{align*}
|\Delta_f-\Delta_g|
&=
\bigl|\det(J_{w^0}J_{w^0}^*)-\det(J_{w^k}J_{w^k}^*)\bigr|
\\
&\le
\bigl|\det(J_{w^0}J_{w^0}^*)-\det(J_{w^0}J_{w^k}^*)\bigr|
+
\bigl|\det(J_{w^0}J_{w^k}^*)-\det(J_{w^k}J_{w^k}^*)\bigr|
\\
&\le
\sum_{i=1}^k
\bigl|\det(J_{w^0}J_{w^{i-1}}^*)-\det(J_{w^0}J_{w^i}^*)\bigr|
+
\sum_{i=1}^k
\bigl|\det(J_{w^{i-1}}J_{w^k}^*)-\det(J_{w^i}J_{w^k}^*)\bigr|
\\
&=
\sum_{i=1}^k
\bigl|\det(J_{w^0}J_{v^i}^*)\bigr|
+
\sum_{i=1}^k
\bigl|\det(J_{v^i}J_{w^k}^*)\bigr|.
\end{align*}
From the Cauchy--Binet formula and the Cauchy--Schwarz inequality, one deduces that
\[
\bigl|\det(J_{w^0}J_{v^i}^*)\bigr|
\le
\sqrt{\det(J_{w^0}J_{w^0}^*)}
\sqrt{\det(J_{v^i}J_{v^i}^*)}.
\]
Moreover, by \eqref{eq-Hadamard},
\[
\sqrt{\det(J_{w^0}J_{w^0}^*)}\le \prod_{j=1}^k |\nabla w_j^0|,
\qquad
\sqrt{\det(J_{v^i}J_{v^i}^*)}\le \prod_{j=1}^k |\nabla v_j^i|.
\]
Therefore,
\[
\bigl|\det(J_{w^0}J_{v^i}^*)\bigr|
\le
\prod_{j=1}^k |\nabla v_j^i|\,|\nabla w_j^0|.
\]
Similarly,
\[
\bigl|\det(J_{v^i}J_{w^k}^*)\bigr|
\le
\prod_{j=1}^k |\nabla v_j^i|\,|\nabla w_j^k|.
\]
Since
\[
|\nabla w_j^i|\le \max\{|\nabla f_j|,|\nabla g_j|\}\quad \forall j\in\{1, \ldots, k\},
\qquad
|\nabla v_j^i|\le \max\{|\nabla f_j|,|\nabla g_j|\}
\quad \forall j\ne i,
\]
and
\[
|\nabla v_i^i|=|\nabla f_i-\nabla g_i|,
\]
we obtain
\[
|\Delta_f-\Delta_g|
\le
2
\sum_{i=1}^k |\nabla f_i-\nabla g_i|\max\{|\nabla f_i|,|\nabla g_i|\}\prod_{j\ne i}\max\{|\nabla f_j|^2,|\nabla g_j|^2\}.
\]
We now estimate the norm using H\"older's inequality:
\begin{align*}
&\|\Delta_f-\Delta_g\|_{L^{\frac{p}{2k}}(\gamma_n)}
\le
2\sum_{i=1}^k \Bigl\||\nabla f_i-\nabla g_i|\max\{|\nabla f_i|,|\nabla g_i|\}\prod_{j\ne i}\max\{|\nabla f_j|^2,|\nabla g_j|^2\}
\Bigr\|_{L^{\frac{p}{2k}}(\gamma_n)}
\\
&\le
2\sum_{i=1}^k \bigl\|\nabla f_i-\nabla g_i\bigr\|_{L^p(\gamma_n)}\bigl\|\max\{|\nabla f_i|,|\nabla g_i|\}\bigr\|_{L^p(\gamma_n)}\prod_{j\ne i}\bigl\|\max\{|\nabla f_j|,|\nabla g_j|\}
\bigr\|_{L^p(\gamma_n)}^2
\end{align*}
Since
\[
\bigl\|\max\{|\nabla f_j|,|\nabla g_j|\}
\bigr\|_{L^p(\gamma_n)}^p\le 
\bigl\|\nabla f_j\bigr\|_{L^p(\gamma_n)}^p
+
\bigl\|\nabla g_j\bigr\|_{L^p(\gamma_n)}^p
\le 2b^p,
\]
we obtain
\[
\|\Delta_f-\Delta_g\|_{L^{\frac{p}{2k}}(\gamma_n)}\le 
2^{1+\frac{2k-1}{p}}kb^{2k-1}\max_{1\le i\le k}\bigl\|\nabla f_i-\nabla g_i\bigr\|_{L^p(\gamma_n)},
\]
which yields the claimed estimate.
\end{proof}

\begin{lemma}\label{lem1.11}
Let $k\in\mathbb N$, let $p>4k$, and let $b>0$.
Let
\[
f=(f_1,\ldots,f_k),\quad g=(g_1,\ldots,g_k)\colon \mathbb R^n\to\mathbb R^k
\]
be such that
\[
f_j,g_j\in C_b(\mathbb{R}^n),
\quad
\max_{1\le j\le k}\|\nabla f_j\|_{L^p(\gamma_n)}\le b,
\quad
\max_{1\le j\le k}\|\nabla g_j\|_{L^p(\gamma_n)}\le b.
\]
Then, for every pair of functions
\[
u\in L^{\frac{p}{2k-1}}(\gamma_n)
\quad\text{and}\quad
v\in L^p(\gamma_n),
\]
one has	
\begin{align*}
&\int_{\mathbb R^n} (\Delta_g+\varepsilon)^{-1}|uv|\,d\gamma_n
\le
\|u\|_{L^{\frac{p}{2k-1}}(\gamma_n)}
\|v\|_{L^p(\gamma_n)}
\Bigl(\int_{\mathbb R^n} (\Delta_f+\varepsilon)^{-m_1}\,d\gamma_n\Bigr)^{1/m_1}
\\
&+
4k\varepsilon^{-1}b^{2k-1}
\|u\|_{L^{\frac{p}{2k-1}}(\gamma_n)}
\|v\|_{L^p(\gamma_n)}
\max_{1\le j\le k}\|\nabla f_j-\nabla g_j\|_{L^p(\gamma_n)}
\Bigl(\int_{\mathbb R^n} (\Delta_f+\varepsilon)^{-m_2}\,d\gamma_n\Bigr)^{1/m_2},
\end{align*}
where
\[
m_1:=\frac{p}{p-2k}
\quad\text{and}\quad
m_2:=\frac{p}{p-4k}.
\]
\end{lemma}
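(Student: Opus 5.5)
The plan is to compare $(\Delta_g+\varepsilon)^{-1}$ with $(\Delta_f+\varepsilon)^{-1}$ pointwise through the elementary identity
\[
\frac{1}{\Delta_g+\varepsilon}
=\frac{1}{\Delta_f+\varepsilon}+\frac{\Delta_f-\Delta_g}{(\Delta_f+\varepsilon)(\Delta_g+\varepsilon)},
\]
and then to apply H\"older's inequality to each resulting term. Since $\Delta_g\ge0$, we have $(\Delta_g+\varepsilon)^{-1}\le\varepsilon^{-1}$, so
\[
(\Delta_g+\varepsilon)^{-1}\le(\Delta_f+\varepsilon)^{-1}+\varepsilon^{-1}|\Delta_f-\Delta_g|(\Delta_f+\varepsilon)^{-1}
\]
pointwise. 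Multiplying by $|uv|$ and integrating against $\gamma_n$ splits the left-hand side into two integrals, one for each term of the claimed bound.

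For the first integral we use H\"older with the three exponents $\frac{p}{2k-1}$, $p$ and $m_1=\frac{p}{p-2k}$. These are conjugate because
\[
\frac{2k-1}{p}+\frac1p+\frac{p-2k}{p}=1.
\]
This gives exactly $\|u\|_{L^{p/(2k-1)}}\|v\|_{L^p}\bigl(\int(\Delta_f+\varepsilon)^{-m_1}d\gamma_n\bigr)^{1/m_1}$.

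For the second integral we use four-factor H\"older with exponents $\frac{p}{2k-1}$ for $u$, $p$ for $v$, $\frac{p}{2k}$ for $|\Delta_f-\Delta_g|$, and $m_2=\frac{p}{p-4k}$ for $(\Delta_f+\varepsilon)^{-1}$. The reciprocals sum to
\[
\frac{2k-1+1+2k+p-4k}{p}=1,
\]
and the assumption $p>4k$ guarantees $m_2\in(1,\infty)$. The factor $\|\Delta_f-\Delta_g\|_{L^{p/(2k)}(\gamma_n)}$ is then bounded by Lemma~\ref{lem6.0}, namely by $4k\,b^{2k-1}\max_j\|\nabla f_j-\nabla g_j\|_{L^p(\gamma_n)}$, using the same gradient bounds $b$. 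This produces the second term with the constant $4k\varepsilon^{-1}b^{2k-1}$.

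The only point needing care is regularity. Lemma~\ref{lem6.0} is stated for $C_b^\infty$ functions, whereas here only $C_b$ is assumed, with gradients in $L^p$. Its proof, however, uses only pointwise algebraic facts (Hadamard's inequality, Cauchy--Binet and telescoping) together with H\"older's inequality. It therefore applies verbatim whenever the gradients exist a.e. and lie in $L^p$, which is implicit in the hypotheses. If needed, one can instead argue by smooth approximation, passing to the limit via a.e.\ convergence and Fatou's lemma. Beyond this, the argument is pure bookkeeping of exponents, which I expect to be the main point to check.
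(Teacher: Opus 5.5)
Your proof is correct and follows the paper's argument essentially verbatim. You use the same pointwise bound $(\Delta_g+\varepsilon)^{-1}\le(\Delta_f+\varepsilon)^{-1}+\varepsilon^{-1}|\Delta_f-\Delta_g|(\Delta_f+\varepsilon)^{-1}$, then H\"older with exponents $\bigl(\frac{p}{2k-1},p,m_1\bigr)$ and $\bigl(\frac{p}{2k-1},p,\frac{p}{2k},m_2\bigr)$, and Lemma~\ref{lem6.0} to bound $\|\Delta_f-\Delta_g\|_{L^{p/(2k)}(\gamma_n)}$. Your remark on the $C_b$ versus $C_b^\infty$ mismatch with Lemma~\ref{lem6.0} addresses a point the paper leaves implicit; in the paper's application (Theorem~\ref{t3.1}) the functions are smooth cylindrical ones, so nothing more is needed there.
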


\begin{proof}
We have
\[
\frac{1}{\Delta_g+\varepsilon}
=
\frac{1}{\Delta_g+\varepsilon}
-
\frac{1}{\Delta_f+\varepsilon}
+
\frac{1}{\Delta_f+\varepsilon}
\le
\frac{|\Delta_f-\Delta_g|}{\varepsilon(\Delta_f+\varepsilon)}
+
\frac{1}{\Delta_f+\varepsilon}.
\]
Therefore,
\begin{equation}\label{eq-lem-11}
\int_{\mathbb R^n} (\Delta_g+\varepsilon)^{-1}|uv|\,d\gamma_n
\le
\int_{\mathbb R^n} \frac{|\Delta_f-\Delta_g||uv|}{\varepsilon(\Delta_f+\varepsilon)}\,d\gamma_n
+
\int_{\mathbb R^n} \frac{|uv|}{\Delta_f+\varepsilon}\,d\gamma_n.
\end{equation}
Applying Lemma \ref{lem6.0} and then H\"older's inequality, we obtain
\begin{align*}
&\int_{\mathbb R^n} \frac{|\Delta_f-\Delta_g||uv|}{\varepsilon(\Delta_f+\varepsilon)}\,d\gamma_n
\le
\varepsilon^{-1}
\|\Delta_f-\Delta_g\|_{L^{\frac{p}{2k}}(\gamma_n)}
\|u\|_{L^{\frac{p}{2k-1}}(\gamma_n)}
\|v\|_{L^p(\gamma_n)}
\Bigl(\int_{\mathbb R^n} (\Delta_f+\varepsilon)^{-m_2}\,d\gamma_n\Bigr)^{1/m_2}
\\
&\le
4k\varepsilon^{-1}b^{2k-1}
\|u\|_{L^{\frac{p}{2k-1}}(\gamma_n)}
\|v\|_{L^p(\gamma_n)}
\max_{1\le j\le k}\|\nabla f_j-\nabla g_j\|_{L^p(\gamma_n)}
\Bigl(\int_{\mathbb R^n} (\Delta_f+\varepsilon)^{-m_2}\,d\gamma_n\Bigr)^{1/m_2}.
\end{align*}
For the second term in \eqref{eq-lem-11}, H\"older's inequality yields
\[
\int_{\mathbb R^n} \frac{|uv|}{\Delta_f+\varepsilon}\,d\gamma_n
\le
\|u\|_{L^{\frac{p}{2k-1}}(\gamma_n)}
\|v\|_{L^p(\gamma_n)}
\Bigl(\int_{\mathbb R^n} (\Delta_f+\varepsilon)^{-m_1}\,d\gamma_n\Bigr)^{1/m_1}.
\]
Combining the two estimates completes the proof.
\end{proof}

\begin{theorem}\label{t3.1}
Let $k\in\mathbb N$, let $p>6k$, let $\varkappa\in(0,1)$, and let $a,b>0$. Set
\[
\beta:=\frac{p\varkappa}{p+2k\varkappa}.
\]
Assume that $f=(f_1,\ldots,f_k)$ and $g=(g_1,\ldots,g_k)$
with $
f_j,g_j\in W^{2,p}(\gamma)$,
satisfy
\[
\max_{1\le j\le k}\|f_j\|_{\dot W^{2,p}(\gamma)}\le b,
\quad
\max_{1\le j\le k}\|g_j\|_{\dot W^{2,p}(\gamma)}\le b,
\]
and
\[
\gamma(\Delta_f\le s)\le as^\varkappa
\quad \forall\, s>0.
\]
Then
\[
d_{\rm TV}(f,g)
\le
176k^4(1-\varkappa)^{-1}
\bigl(a^{1/\varkappa}b^{2k-1}\max_{1\le j\le k}\|f_j-g_j\|_{W^{1,p}(\gamma)}\bigr)^\beta.
\]
\end{theorem}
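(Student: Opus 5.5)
The plan is to reduce to the cylindrical setting $E=\mathbb R^n$, $\gamma=\gamma_n$ with $f_j,g_j\in C_b^\infty$, and then pass to the limit as in Steps~4--5 of Lemma~\ref{lem:main}. Write $\delta:=\max_j\|f_j-g_j\|_{W^{1,p}(\gamma)}$.

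\textbf{Splitting the test function.} Fix $\varphi\in C_0^\infty(\mathbb R^k)$ with $\|\varphi\|_\infty\le1$. Split $\varphi=\varphi_t+(\varphi-\varphi_t)$, where $\varphi_t=\varphi*\rho_t$ is a mollification at scale $t$.

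\emph{The smooth part.} Since $\|\nabla\varphi_t\|_\infty\lesssim t^{-1}$, we get
\[
\Bigl|\int(\varphi_t(f)-\varphi_t(g))\,d\gamma\Bigr|\lesssim t^{-1}\sqrt{k}\,\delta .
\]

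\emph{The rough part.} Write $\varphi-\varphi_t=\sum_{i}\partial_i\psi_i$ with $\|\psi_i\|_\infty\lesssim t$; this is the standard representation underlying \eqref{eq:intro-TV-est}. The term $\int(\varphi-\varphi_t)(f)\,d\gamma$ is controlled by $\sigma_f(t)\lesssim t^\alpha$ via Theorem~\ref{t1.1}, with $\alpha=p\varkappa/(p+(2k-1)\varkappa)\ge\beta$.

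\textbf{The rough part for $g$.} The term $\int(\varphi-\varphi_t)(g)\,d\gamma$ is the real issue, since no small-ball bound is assumed for $\Delta_g$. Here I would redo the integration by parts of Lemma~\ref{lem:main} for $g$. Instead of the cutoff $\Phi_\varepsilon(\Delta_g)$, use the smooth weight $\Delta_g(\Delta_g+\varepsilon)^{-1}$, so that
\[
\partial_i\psi(g)=\frac{\Delta_g}{\Delta_g+\varepsilon}\,\partial_i\psi(g)+\frac{\varepsilon}{\Delta_g+\varepsilon}\,\partial_i\psi(g).
\]

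\emph{The weighted part.} By the identity $\sum_j(A_g)_{ij}\langle\nabla g_j,\nabla(\psi\circ g)\rangle=\Delta_g\,\partial_i\psi(g)$ and integration by parts, the first piece becomes $\|\psi\|_\infty\lesssim t$ times $L^1$ norms of expressions of two forms. The first is $(\Delta_g+\varepsilon)^{-1}\times(\text{terms bounded via Lemma~\ref{lem1.2}, \eqref{eq-op-norm-est}})$. The second is $(\Delta_g+\varepsilon)^{-2}|\langle A_gJ_g\theta,\nabla\Delta_g\rangle|$, which by Lemmas~\ref{lem1.1}--\ref{lem1.2} is at most $(\Delta_g+\varepsilon)^{-1}$ times the same Sobolev products. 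The $\sqrt{\Delta_g}$ gains are what make this work.

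\emph{The remainder.} The second piece is bounded by $\varepsilon\int(\Delta_g+\varepsilon)^{-1}\,d\gamma$.

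\textbf{Transferring to $\Delta_f$.} All resulting integrals have the form $\int(\Delta_g+\varepsilon)^{-1}|uv|\,d\gamma_n$. Here $u$ is a product of gradients lying in $L^{p/(2k-1)}$ with norm $\lesssim k^{k-1/2}b^{2k-1}$ by Lemma~\ref{lem0}, and $v$ is $1$ or a factor in $L^p$. Lemma~\ref{lem1.11} then transfers everything to moments of $(\Delta_f+\varepsilon)^{-m_1}$ and $(\Delta_f+\varepsilon)^{-m_2}$, plus a term carrying $\varepsilon^{-1}b^{2k-1}\delta$. It is to make $m_2=p/(p-4k)$ usable with room to spare that one needs $p>6k$. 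The small-ball bound for $\Delta_f$ gives, as in the computation of $h_{\gamma,q}$,
\[
\Bigl(\int(\Delta_f+\varepsilon)^{-m}\,d\gamma\Bigr)^{1/m}\lesssim (1-\varkappa)^{-1}\,a^{1/m}\,\varepsilon^{-1+\varkappa/m}.
\]
Collecting terms, the $g$-part is bounded by
\[
t\,b^{2k-1}\bigl[\varepsilon^{-1+\varkappa/m_1}+\varepsilon^{-2+\varkappa/m_2}b^{2k-1}\delta\bigr]+\varepsilon^{\varkappa/m_1}+\varepsilon^{\varkappa/m_2}b^{2k-1}\delta .
\]

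\textbf{Optimization (main obstacle).} Finally, optimize in $t$ and $\varepsilon$. I expect this step to be the main obstacle: the naive choices lose exponents through the $m_1,m_2$ Hölder losses, whereas the target $\beta=p\varkappa/(p+2k\varkappa)$ is exactly what arises from balancing $t^{-1}\delta$ against $\varepsilon^{\varkappa}$ and $t\varepsilon^{-1}$ with a single $L^{p/2k}$ Hölder loss. If the direct $\varphi_t$ splitting is too lossy, I would try first to avoid mollification altogether. That is, I would integrate by parts directly for $g$ in the full TV test, using $\varphi=\partial_\theta\Psi$ locally, and compare the Malliavin-weighted integrands for $f$ and $g$ pointwise via Lemma~\ref{lem6.0}. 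I would then choose $\varepsilon\approx(a^{1/\varkappa}b^{2k-1}\delta)^{p/(p+2k\varkappa)}$, so that $\gamma(\Delta_f\le\varepsilon)\lesssim a\varepsilon^\varkappa$ matches the error $\varepsilon^{-1}b^{2k-1}\delta$ raised to the power coming from the $L^{p/2k}$ bound on $\Delta_f-\Delta_g$. The constants $k^4$ and $(1-\varkappa)^{-1}$ come from Lemmas~\ref{lem0}, \ref{lem6.0} and the moment bound.
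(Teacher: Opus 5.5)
Your main route has a genuine gap. Splitting $\varphi=\varphi_t+(\varphi-\varphi_t)$ and controlling the rough part by $\sigma_f(t)$ is exactly the Kantorovich--Rubinstein route of Theorem~\ref{t2.1}, and it cannot reach $\beta$. The smoothing cost $t^{-1}\delta$ and the bound $\sigma_f(t)\le Mt^\alpha$ of Theorem~\ref{t1.1} balance at $\delta^{\alpha/(1+\alpha)}$, and
\[
\frac{\alpha}{1+\alpha}=\frac{p\varkappa}{p+(2k-1)\varkappa+p\varkappa}<\frac{p\varkappa}{p+2k\varkappa}=\beta ,
\]
since the denominators differ by $(p-1)\varkappa>0$. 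Your heuristic for $\beta$ fails for the same reason. Balancing $t^{-1}\delta$, $\varepsilon^\varkappa$ and $t\varepsilon^{-1+\varkappa/m_1}$ gives $\delta^{\varkappa/(1+\varkappa+2k\varkappa/p)}$, and any balance containing the term $t^{-1}\delta$ carries this extra $\varkappa$ in the denominator.

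There is also a second, independent loss in your remainder. Set $T:=b^{2k-1}\delta$. Pushing $\varepsilon\int(\Delta_g+\varepsilon)^{-1}\,d\gamma$ through Lemma~\ref{lem1.11} yields $\varepsilon^{\varkappa/m_1}$ instead of $a\varepsilon^\varkappa$. Balancing this against $T\varepsilon^{-1+\varkappa/m_1}$ only gives $T^{\varkappa(1-2k/p)}$, which is again worse than $T^\beta$. The paper instead writes $(\Delta_g+\varepsilon)^{-1}\le\varepsilon^{-1}|\Delta_f-\Delta_g|(\Delta_f+\varepsilon)^{-1}+(\Delta_f+\varepsilon)^{-1}$ and integrates the last term directly. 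In this way a H\"older loss only ever multiplies a term that carries the small factor $T$.

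The missing idea, which your fallback only gestures at, is to do no smoothing at all and to make the \emph{potential} in the integration by parts small, rather than merely bounded by $t$. The paper interpolates through $w^i=(g_1,\ldots,g_i,f_{i+1},\ldots,f_k)$, changing one coordinate at a time (see \eqref{eq-tv-decomp}). It then uses the antiderivative $\Phi_i$ of $\varphi$ in the $i$-th variable. Since $w^{i-1}$ and $w^i$ differ only in the $i$-th coordinate and $\|\varphi\|_\infty\le1$, one has $|\Phi_i(w^{i-1})-\Phi_i(w^i)|\le|f_i-g_i|$. This is of size $\delta$ in $L^p$ whatever the support of $\varphi$. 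By contrast, a single $\Psi$ with $\varphi=\partial_\theta\Psi$ is not even uniformly bounded, and integrating by parts for $g$ alone produces nothing that vanishes with $\delta$.

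The remaining pieces of the paper's argument are as follows.
\begin{itemize}
\item The adjugate identity applied to $\nabla(\Phi_i(w^{i-1})-\Phi_i(w^i))$ gives $\Delta_{w^{i-1}}(\varphi(w^{i-1})-\varphi(w^i))$ up to the correction $\varphi(w^i)\sum_j\langle\nabla f_i-\nabla g_i,\nabla w^{i-1}_j\rangle(A_{w^{i-1}})_{i,j}$. This correction is handled by Lemma~\ref{lem1.1} and the Hadamard bound of Lemma~\ref{lem6.0}.
\item The changes of weight $\Delta_{w^{i-1}}/(\Delta_{w^{i-1}}+\varepsilon)-\Delta_{w^i}/(\Delta_{w^i}+\varepsilon)$ are handled by Lemma~\ref{lem6.0}. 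The triple products of $\Delta$-differences there need $m_3=p/(p-6k)$, which is where $p>6k$ comes from, not $m_2$.
\item After transfer to $\Delta_f$ via Lemma~\ref{lem1.11}, the terms are $a^{1/m_j}T^j\varepsilon^{\varkappa-j\eta}$ for $j=1,2,3$, together with $a\varepsilon^\varkappa$, where $\eta=1+2k\varkappa/p$.
\item The choice $\varepsilon=a^{-2k/(p\eta)}T^{1/\eta}$ makes all of these terms equal to $a^{\beta/\varkappa}T^\beta$. Your suggested $\varepsilon$ has the wrong power of $a$.
\end{itemize}
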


\begin{proof}
Fix $\varphi\in C_0^\infty(\mathbb R^k)$ with $\|\varphi\|_\infty\le 1$, and let $\varepsilon>0$ be chosen later.

\medskip
\noindent
\textbf{Step 1: the cylindrical case.}
Similarly to the proof of Lemma \ref{lem:main},
we first assume that $g_1, \ldots, g_k,f_1,\ldots,f_k\in \mathcal{FC}^\infty$ and that
\[
g_j(x)=\widetilde g_j\bigl(\ell_1(x),\ldots,\ell_n(x)\bigr),
\quad
f_j(x)=\widetilde f_j\bigl(\ell_1(x),\ldots,\ell_n(x)\bigr),
\quad j\in\{1,\ldots,k\},
\]
for some $\ell_1,\ldots,\ell_n\in E^*$.
Without loss of generality, we may assume that
$\{\ell_1,\ldots,\ell_n\}$ is an orthonormal system in $L^2(\gamma)$, that
$\{h_{\ell_1},\ldots,h_{\ell_n}\}$ is an orthonormal system in $H(\gamma)$, and
that the distribution of the mapping
\[
(\ell_1,\ldots,\ell_n)\colon (E,\gamma)\to\mathbb R^n
\]
is the standard Gaussian measure $\gamma_n$ on $\mathbb R^n$.

For $i=0,\ldots,k$, we set
\[
w^i:=(\widetilde{g}_1,\ldots,\widetilde{g}_i,\widetilde{f}_{i+1},\ldots,\widetilde{f}_k),
\]
so that $w^0=\widetilde{f}$ and $w^k=\widetilde{g}$.
We also set
\[
T:=\max_{1\le j\le k}\max\bigl\{\|f_j\|_{\dot W^{2,p}(\gamma)}^{2k-1}, \|g_j\|_{\dot W^{2,p}(\gamma)}^{2k-1}\bigr\}\max_{1\le j\le k}\|f_j-g_j\|_{W^{1,p}(\gamma)}.
\]
We note that, for every pair of indexes $0\le i_1<i_2\le k$,
\[
\max_{1\le j\le k}\max\bigl\{\|w^{i_1}_j -  w^{i_2}_j\|_{L^p(\gamma_n)}, \|\nabla w^{i_1}_j - \nabla w^{i_2}_j\|_{L^p(\gamma_n)}\bigr\}\le 
\max_{1\le j\le k}\|f_j-g_j\|_{W^{1,p}(\gamma)}.
\]

After changing the measure, we have
\begin{align}\label{eq-tv-decomp}
\int_E (\varphi(f)-\varphi(g))\,d\gamma
&=
\int_{\mathbb{R}^n} (\varphi(\widetilde{f})-\varphi(\widetilde{g}))\,d\gamma_n
\\
&=
\sum_{i=1}^k \int_{\mathbb{R}^n} \varphi(w^i)
\Bigl(
\frac{\Delta_{w^{i-1}}}{\Delta_{w^{i-1}}+\varepsilon}
-
\frac{\Delta_{w^i}}{\Delta_{w^i}+\varepsilon}
\Bigr)\,d\gamma_n
\nonumber
\\
&+
\sum_{i=1}^k \int_{\mathbb{R}^n}
(\varphi(w^{i-1})-\varphi(w^i))
\frac{\Delta_{w^{i-1}}}{\Delta_{w^{i-1}}+\varepsilon}\,d\gamma_n
\nonumber
\\
&+
\int_{\mathbb{R}^n} \varphi(w^0)\frac{\varepsilon}{\Delta_{w^0}+\varepsilon}\,d\gamma_n
-
\int_{\mathbb{R}^n} \varphi(w^k)\frac{\varepsilon}{\Delta_{w^k}+\varepsilon}\,d\gamma_n.
\nonumber
\end{align}

\medskip
\noindent
\textbf{Step 2: estimate of the first sum in \eqref{eq-tv-decomp}.}
To estimate the first sum in \eqref{eq-tv-decomp}
we note that	
\begin{align*}
&\Bigl|\frac{\Delta_{w^{i-1}}}{\Delta_{w^{i-1}}+\varepsilon}  -
\frac{\Delta_{w^{i}}}{\Delta_{w^{i}}+\varepsilon} \Bigr| =
\frac{\varepsilon|\Delta_{w^{i-1}} - \Delta_{w^{i}}|}{(\Delta_{w^{i-1}}+\varepsilon)(\Delta_{w^{i}}+\varepsilon)} 
\\ 
&\le
\varepsilon|\Delta_{w^{i-1}} - \Delta_{w^{i}}|
\Bigl( \frac{|\Delta_{w^0} - \Delta_{w^{i-1}}|}{(\Delta_{w^{i-1}}+\varepsilon)(\Delta_{w^0}+\varepsilon)}  +\frac{1}{\Delta_{w^0}+\varepsilon}\Bigr) \Bigl( \frac{|\Delta_{w^0} - \Delta_{w^{i}}|}{(\Delta_{w^{i}}+\varepsilon)(\Delta_{w^0}+\varepsilon)}  +\frac{1}{\Delta_{w^0}+\varepsilon} \Bigr),
\end{align*}
Hence
\begin{align*}
\biggl|
\int_{\mathbb{R}^n} \varphi(w^i)
&\Bigl(
\frac{\Delta_{w^{i-1}}}{\Delta_{w^{i-1}}+\varepsilon}
-
\frac{\Delta_{w^i}}{\Delta_{w^i}+\varepsilon}
\Bigr)\,d\gamma_n
\biggr|
\\
&\le
\varepsilon^{-1}
\int_{\mathbb{R}^n}
\frac{|\Delta_{w^{i-1}}-\Delta_{w^i}||\Delta_{w^0}-\Delta_{w^{i-1}}||\Delta_{w^0}-\Delta_{w^i}|}
{(\Delta_{w^0}+\varepsilon)^2}\,d\gamma_n
\\
&+
\int_{\mathbb{R}^n}
\frac{|\Delta_{w^{i-1}}-\Delta_{w^i}||\Delta_{w^0}-\Delta_{w^{i-1}}|}{(\Delta_{w^0}+\varepsilon)^2}\,d\gamma_n
+
\int_{\mathbb{R}^n}
\frac{|\Delta_{w^{i-1}}-\Delta_{w^i}||\Delta_{w^0}-\Delta_{w^i}|}{(\Delta_{w^0}+\varepsilon)^2}\,d\gamma_n
\\
&+
\varepsilon
\int_{\mathbb{R}^n}
\frac{|\Delta_{w^{i-1}}-\Delta_{w^i}|}{(\Delta_{w^0}+\varepsilon)^2}\,d\gamma_n.
\end{align*}
Set
\[
m_1:=\frac{p}{p-2k},
\qquad
m_2:=\frac{p}{p-4k},
\qquad
m_3:=\frac{p}{p-6k}.
\]
By Lemma \ref{lem6.0} and H\"older's inequality we obtain
\begin{align*}
&\int_{\mathbb{R}^n}
\frac{|\Delta_{w^{i-1}}-\Delta_{w^i}||\Delta_{w^0}-\Delta_{w^{i-1}}||\Delta_{w^0}-\Delta_{w^i}|}
{(\Delta_{w^0}+\varepsilon)^2}\,d\gamma_n
\\
&\le
\varepsilon^{-1}\|\Delta_{w^{i-1}}-\Delta_{w^i}\|_{L^{\frac{p}{2k}}(\gamma_n)}
\|\Delta_{w^0}-\Delta_{w^{i-1}}\|_{L^{\frac{p}{2k}}(\gamma_n)}
\|\Delta_{w^0}-\Delta_{w^i}\|_{L^{\frac{p}{2k}}(\gamma_n)}
\Bigl(\int_{\mathbb{R}^n}(\Delta_{w^0}+\varepsilon)^{-m_3}\,d\gamma_n\Bigr)^{1/m_3}
\\
&\le
64 k^3T^3\varepsilon^{-1} 
\Bigl(\int_{\mathbb{R}^n}(\Delta_{w^0}+\varepsilon)^{-m_3}\,d\gamma_n\Bigr)^{1/m_3}.
\end{align*}
Similarly, we have
\begin{align*}
&\int_{\mathbb{R}^n}
\frac{|\Delta_{w^{i-1}}-\Delta_{w^i}||\Delta_{w^0}-\Delta_{w^{i-1}}|}{(\Delta_{w^0}+\varepsilon)^2}\,d\gamma_n
+
\int_{\mathbb{R}^n}
\frac{|\Delta_{w^{i-1}}-\Delta_{w^i}||\Delta_{w^0}-\Delta_{w^i}|}{(\Delta_{w^0}+\varepsilon)^2}\,d\gamma_n
\\
&\le 	
32 k^2T^2 \varepsilon^{-1}
\Bigl(\int_{\mathbb{R}^n}(\Delta_{w^0}+\varepsilon)^{-m_2}\,d\gamma_n\Bigr)^{1/m_2}
\end{align*}	
and
\[
\int_{\mathbb{R}^n}
\frac{|\Delta_{w^{i-1}}-\Delta_{w^i}|}{(\Delta_{w^0}+\varepsilon)^2}\,d\gamma_n
\le 
4kT\varepsilon^{-1}
\Bigl(\int_{\mathbb{R}^n}(\Delta_{w^0}+\varepsilon)^{-m_1}\,d\gamma_n\Bigr)^{1/m_1}
\]
Summing up, we obtain
\begin{align*}
\sum_{i=1}^k
\biggl|
\int_{\mathbb R^n} \varphi(w^i)
\Bigl(
\frac{\Delta_{w^{i-1}}}{\Delta_{w^{i-1}}+\varepsilon}
-
\frac{\Delta_{w^i}}{\Delta_{w^i}+\varepsilon}
\Bigr)\,d\gamma_n
\biggr|
&\le
64k^4T^3\varepsilon^{-2}
\Bigl(\int_{\mathbb R^n}(\Delta_{w^0}+\varepsilon)^{-m_3}\,d\gamma_n\Bigr)^{1/m_3}
\\
&+
32k^3T^2\varepsilon^{-1}
\Bigl(\int_{\mathbb R^n}(\Delta_{w^0}+\varepsilon)^{-m_2}\,d\gamma_n\Bigr)^{1/m_2}
\nonumber
\\
&+
4k^2T
\Bigl(\int_{\mathbb R^n}(\Delta_{w^0}+\varepsilon)^{-m_1}\,d\gamma_n\Bigr)^{1/m_1}.
\nonumber
\end{align*}

\medskip
\noindent
\textbf{Step 3: estimate of the second sum in \eqref{eq-tv-decomp}.}
Let
\[
\Phi_i(y_1,\ldots,y_k):=
\int_{-\infty}^{y_i}\varphi(y_1,\ldots,y_{i-1},t,y_{i+1},\ldots,y_k)\,dt.
\]
Then
$\partial_i\Phi_i=\varphi$.
Since 
\[
w_j^{i-1}=w_j^i \quad \text{for } j\ne i,
\quad
w_i^{i-1}=\widetilde{f}_i,
\quad
w_i^i=\widetilde{g}_i,
\]
we have
\begin{align*}
\nabla\bigl(\Phi_i(w^{i-1})-\Phi_i(w^i)\bigr)
&=
\sum_{j=1}^k
\bigl(\partial_j\Phi_i(w^{i-1})-\partial_j\Phi_i(w^i)\bigr)\nabla w_j^{i-1}
+
\varphi(w^i)\bigl(\nabla \widetilde{f}_i-\nabla \widetilde{g}_i\bigr).
\end{align*}
By the identity
\[
A_{w^{i-1}}M_{w^{i-1}}=\Delta_{w^{i-1}}I,
\]
we obtain
\begin{align*}
\Delta_{w^{i-1}}\bigl(\varphi(w^{i-1})-\varphi(w^i)\bigr)
&=\Delta_{w^{i-1}}\bigl(\partial_i\Phi_i(w^{i-1})-\partial_i\Phi_i(w^i)\bigr)
\\
&=
\sum_{j=1}^k
\bigl\langle
\nabla \Phi_i(w^{i-1})-\nabla \Phi_i(w^i),
\nabla w_j^{i-1}
\bigr\rangle
(A_{w^{i-1}})_{i,j}
\\
&-
\varphi(w^i)
\sum_{j=1}^k
\bigl\langle
\nabla \widetilde{f}_i-\nabla \widetilde{g}_i,
\nabla w_j^{i-1}
\bigr\rangle
(A_{w^{i-1}})_{i,j}.
\end{align*}
Therefore,
\begin{align}\label{eq-second-sum-1}
\int_{\mathbb{R}^n}
(\varphi(w^{i-1})-&\varphi(w^i))
\frac{\Delta_{w^{i-1}}}{\Delta_{w^{i-1}}+\varepsilon}\,d\gamma_n
\\
&=
\int_{\mathbb{R}^n}
(\Delta_{w^{i-1}}+\varepsilon)^{-1}
\sum_{j=1}^k
\bigl\langle
\nabla \Phi_i(w^{i-1})-\nabla \Phi_i(w^i),
\nabla w_j^{i-1}
\bigr\rangle
(A_{w^{i-1}})_{i,j}\,d\gamma_n
\nonumber
\\
&-
\int_{\mathbb{R}^n}
\varphi(w^i)(\Delta_{w^{i-1}}+\varepsilon)^{-1}
\sum_{j=1}^k
\bigl\langle
\nabla \widetilde{f}_i-\nabla \widetilde{g}_i,
\nabla w_j^{i-1}
\bigr\rangle
(A_{w^{i-1}})_{i,j}\,d\gamma_n.
\nonumber
\end{align}

\medskip
\noindent
\textbf{Step 3.1: estimate of the second integral in \eqref{eq-second-sum-1}.}
By Lemma \ref{lem1.1},
\begin{align*}
\Bigl|\sum_{j=1}^k
\bigl\langle
\nabla \widetilde{f}_i-\nabla \widetilde{g}_i,
\nabla w_j^{i-1}
\bigr\rangle
(A_{w^{i-1}})_{i,j}\Bigr|
&\le
\|A_{w^{i-1}} J_{w^{i-1}}\|_{\rm op} |\nabla \widetilde{f}_i-\nabla \widetilde{g}_i|
\\
&\le
2k^{\frac{1}{2}-\frac{k}{2}}\Delta_{w^{i-1}}^{1/2}
\Bigl(\sum_{j=1}^k|\nabla w^{i-1}_j|^2\Bigr)^{\frac{k-1}{2}}
|\nabla \widetilde{f}_i-\nabla \widetilde{g}_i|.
\end{align*}
By the first part of Lemma \ref{lem6.0},
\[
\Delta_{w^{i-1}}^{1/2}
\le
\prod_{m=1}^k|\nabla w_m^{i-1}|,
\]
and therefore
\[
\Bigl\|\Delta_{w^{i-1}}^{1/2}
\Bigl(\sum_{j=1}^k|\nabla w^{i-1}_j|^2\Bigr)^{\frac{k-1}{2}}\Bigr\|_{L^{\frac{p}{2k-1}}(\gamma_n)}
\le
\prod_{m=1}^k\|\nabla w_m^{i-1}\|_{L^p(\gamma_n)}
\Bigl\|\Bigl(\sum_{j=1}^k|\nabla w^{i-1}_j|^2\Bigr)^{\frac{k-1}{2}}\Bigr\|_{L^{\frac{p}{k-1}}(\gamma_n)}.
\]
Since
\[
\Bigl\|\Bigl(\sum_{j=1}^k|\nabla w^{i-1}_j|^2\Bigr)^{\frac{k-1}{2}}\Bigr\|_{L^{\frac{p}{k-1}}(\gamma_n)}
=
\Bigl\|\sum_{j=1}^k|\nabla w^{i-1}_j|^2\Bigr\|_{L^{\frac{p}{2}}(\gamma_n)}^{\frac{k-1}{2}}
\le 
k^{\frac{k-1}{2}} \max_{1\le j\le k}\|\nabla w^{i-1}_j\|_{L^p(\gamma_n)}^{k-1},
\]
we obtain
\[
\Bigl\|\Delta_{w^{i-1}}^{1/2}
\Bigl(\sum_{j=1}^k|\nabla w^{i-1}_j|^2\Bigr)^{\frac{k-1}{2}}\Bigr\|_{L^{\frac{p}{2k-1}}(\gamma_n)}
\le
k^{\frac{k-1}{2}} \max_{1\le j\le k}\|\nabla w^{i-1}_j\|_{L^p(\gamma_n)}^{2k-1}.
\]
Applying Lemma \ref{lem1.11} to the pair $(w^0,w^{i-1})$ with
\[
u=
\Delta_{w^{i-1}}^{1/2}
\Bigl(\sum_{j=1}^k|\nabla w^{i-1}_j|^2\Bigr)^{\frac{k-1}{2}}
\quad\text{and}\quad
v=
|\nabla \widetilde{f}_i-\nabla \widetilde{g}_i|,
\]
we obtain
\begin{align*}
&\biggl|\int_{\mathbb{R}^n}
\varphi(w^i)(\Delta_{w^{i-1}}+\varepsilon)^{-1}
\sum_{j=1}^k
\bigl\langle
\nabla \widetilde{f}_i-\nabla \widetilde{g}_i,
\nabla w_j^{i-1}
\bigr\rangle
(A_{w^{i-1}})_{i,j}\,d\gamma_n\biggr|
\\
&\le
2T\Bigl(\int_{\mathbb R^n}(\Delta_{w^0}+\varepsilon)^{-m_1}\,d\gamma_n\Bigr)^{1/m_1}
+
8kT^2\varepsilon^{-1}
\Bigl(\int_{\mathbb R^n}(\Delta_{w^0}+\varepsilon)^{-m_2}\,d\gamma_n\Bigr)^{1/m_2}.
\end{align*}

\medskip
\noindent
\textbf{Step 3.2: estimate of the first integral in \eqref{eq-second-sum-1}.}
For the first integral in \eqref{eq-second-sum-1}, we integrate by parts:
\begin{align}\label{eq-int-parts-term}
&\int_{\mathbb R^n}
(\Delta_{w^{i-1}}+\varepsilon)^{-1}
\sum_{j=1}^k
\bigl\langle
\nabla \Phi_i(w^{i-1})-\nabla \Phi_i(w^i),
\nabla w_j^{i-1}
\bigr\rangle
(A_{w^{i-1}})_{i,j}\,d\gamma_n
\\
&=
-\int_{\mathbb R^n}
(\Phi_i(w^{i-1})-\Phi_i(w^i))
\sum_{j=1}^k
\frac{(A_{w^{i-1}})_{i,j}Lw_j^{i-1}
+\langle \nabla w_j^{i-1},\nabla (A_{w^{i-1}})_{i,j}\rangle}
{\Delta_{w^{i-1}}+\varepsilon}\,d\gamma_n
\nonumber
\\
&+
\int_{\mathbb R^n}
(\Phi_i(w^{i-1})-\Phi_i(w^i))
\sum_{j=1}^k
\frac{(A_{w^{i-1}})_{i,j}\langle \nabla w_j^{i-1},\nabla \Delta_{w^{i-1}}\rangle}
{(\Delta_{w^{i-1}}+\varepsilon)^2}\,d\gamma_n .
\nonumber
\end{align}
For the second term on the right-hand side, by Lemmas \ref{lem1.1} and \ref{lem1.2} we obtain
\begin{align*}
\Bigl|
\sum_{j=1}^k
(A_{w^{i-1}})_{i,j}\langle \nabla w_j^{i-1},\nabla \Delta_{w^{i-1}}\rangle
\Bigr|
&\le
\|A_{w^{i-1}}J_{w^{i-1}}\|_{\rm op}\,|\nabla \Delta_{w^{i-1}}|
\\
&\le
4k^{\frac32-k}\Delta_{w^{i-1}}
\Bigl(\sum_{j=1}^k |\nabla w_j^{i-1}|^2\Bigr)^{k-1}
\Bigl(\sum_{j=1}^k \|D^2 w_j^{i-1}\|_{\rm HS}^2\Bigr)^{1/2},
\end{align*}
Since
\[
|\Phi_i(w^{i-1})-\Phi_i(w^i)|\le |\widetilde f_i-\widetilde g_i|
\quad\text{and}\quad
\frac{\Delta_{w^{i-1}}}{(\Delta_{w^{i-1}}+\varepsilon)^2}
\le
\frac{1}{\Delta_{w^{i-1}}+\varepsilon},
\]
we obtain
\begin{align*}
&\biggl|
\int_{\mathbb R^n}
(\Phi_i(w^{i-1})-\Phi_i(w^i))
\sum_{j=1}^k
\frac{(A_{w^{i-1}})_{i,j}\langle \nabla w_j^{i-1},\nabla \Delta_{w^{i-1}}\rangle}
{(\Delta_{w^{i-1}}+\varepsilon)^2}\,d\gamma_n
\biggr|
\\
&\le
4k^{\frac32-k}
\int_{\mathbb R^n}
|\widetilde f_i-\widetilde g_i|
\frac{\Bigl(\sum_{j=1}^k |\nabla w_j^{i-1}|^2\Bigr)^{k-1}
\Bigl(\sum_{j=1}^k \|D^2 w_j^{i-1}\|_{\rm HS}^2\Bigr)^{1/2}}
{\Delta_{w^{i-1}}+\varepsilon}\,d\gamma_n.
\end{align*}
By Lemma \ref{lem0}, 
\[
\Bigl\|
\Bigl(\sum_{j=1}^k |\nabla w_j^{i-1}|^2\Bigr)^{k-1}
\Bigl(\sum_{j=1}^k \|D^2 w_j^{i-1}\|_{\rm HS}^2\Bigr)^{1/2}
\Bigr\|_{L^{\frac{p}{2k-1}}(\gamma_n)}
\le
k^{k-\frac12}
\max_{1\le j\le k}\|w_j^{i-1}\|_{\dot W^{2,p}(\gamma_n)}^{2k-1}.
\]
Therefore, by Lemma \ref{lem1.11}, we obtain
\begin{align*}
&\biggl|
\int_{\mathbb R^n}
(\Phi_i(w^{i-1})-\Phi_i(w^i))
\sum_{j=1}^k
\frac{(A_{w^{i-1}})_{i,j}\langle \nabla w_j^{i-1},\nabla \Delta_{w^{i-1}}\rangle}
{(\Delta_{w^{i-1}}+\varepsilon)^2}\,d\gamma_n
\biggr|
\\
&\le
4k\,
T
\Bigl(\int_{\mathbb R^n}(\Delta_{w^0}+\varepsilon)^{-m_1}\,d\gamma_n\Bigr)^{1/m_1}
+
16k^{2}
T^2\varepsilon^{-1}
\Bigl(\int_{\mathbb R^n}(\Delta_{w^0}+\varepsilon)^{-m_2}\,d\gamma_n\Bigr)^{1/m_2}.
\end{align*}

To estimate the first integral on the right-hand side of \eqref{eq-int-parts-term},
we note that, by \eqref{eq-op-norm-est},
\begin{align*}
\Bigl|
\sum_{j=1}^k (A_{w^{i-1}})_{i,j}Lw_j^{i-1}
\Bigr|
&\le
\|A_{w^{i-1}}\|_{\rm op}
\Bigl(\sum_{j=1}^k |Lw_j^{i-1}|^2\Bigr)^{1/2}
\\
&\le
ek^{1-k}
\Bigl(\sum_{j=1}^k |\nabla w_j^{i-1}|^2\Bigr)^{k-1}
\Bigl(\sum_{j=1}^k |Lw_j^{i-1}|^2\Bigr)^{1/2}.
\end{align*}
Therefore, by Lemma \ref{lem0},
\begin{align*}
\Bigl\|
\sum_{j=1}^k (A_{w^{i-1}})_{i,j}Lw_j^{i-1}
\Bigr\|_{L^{\frac{p}{2k-1}}(\gamma_n)}
&\le
ek^{1-k}
\Bigl\|
\Bigl(\sum_{j=1}^k |\nabla w_j^{i-1}|^2\Bigr)^{k-1}
\Bigl(\sum_{j=1}^k |Lw_j^{i-1}|^2\Bigr)^{1/2}
\Bigr\|_{L^{\frac{p}{2k-1}}(\gamma_n)}
\\
&\le
e\sqrt{k}\,
\max_{1\le j\le k}\|w_j^{i-1}\|_{\dot W^{2,p}(\gamma_n)}^{2k-1}
\le
3k\,
\max_{1\le j\le k}\|w_j^{i-1}\|_{\dot W^{2,p}(\gamma_n)}^{2k-1}.
\end{align*}
Next, by Lemma \ref{lem1.2} with $\theta=e_i$,
\[
\Bigl|
\sum_{j=1}^k
\langle \nabla w_j^{i-1},\nabla (A_{w^{i-1}})_{i,j}\rangle
\Bigr|
\le
5k^{\frac32-k}
\Bigl(\sum_{j=1}^k |\nabla w_j^{i-1}|^2\Bigr)^{k-1}
\Bigl(\sum_{j=1}^k \|D^2 w_j^{i-1}\|_{\rm HS}^2\Bigr)^{1/2}.
\]
Hence, by Lemma \ref{lem0},
\begin{align*}
\Bigl\|
\sum_{j=1}^k
\langle \nabla w_j^{i-1},\nabla (A_{w^{i-1}})_{i,j}\rangle
\Bigr\|_{L^{\frac{p}{2k-1}}(\gamma_n)}
&\le
5k^{\frac32-k}
\Bigl\|
\Bigl(\sum_{j=1}^k |\nabla w_j^{i-1}|^2\Bigr)^{k-1}
\Bigl(\sum_{j=1}^k \|D^2 w_j^{i-1}\|_{\rm HS}^2\Bigr)^{1/2}
\Bigr\|_{L^{\frac{p}{2k-1}}(\gamma_n)}
\\
&\le
5k\,
\max_{1\le j\le k}\|w_j^{i-1}\|_{\dot W^{2,p}(\gamma_n)}^{2k-1}.
\end{align*}
Therefore,
\[
\Bigl\|\sum_{j=1}^k(A_{w^{i-1}})_{i,j}Lw_j^{i-1}
+\langle \nabla w_j^{i-1},\nabla (A_{w^{i-1}})_{i,j}\rangle\Bigr\|_{L^{\frac{p}{2k-1}}(\gamma_n)}
\le 8k\max_{1\le j\le k}\max\bigl\{\|f_j\|_{\dot W^{2,p}(\gamma)}^{2k-1}, \|g_j\|_{\dot W^{2,p}(\gamma)}^{2k-1}\bigr\}.
\]
Now we apply Lemma \ref{lem1.11} and obtain
\begin{align*}
&\biggl|
\int_{\mathbb R^n}
(\Phi_i(w^{i-1})-\Phi_i(w^i))
\sum_{j=1}^k
\frac{(A_{w^{i-1}})_{i,j}Lw_j^{i-1}
+\langle \nabla w_j^{i-1},\nabla (A_{w^{i-1}})_{i,j}\rangle}
{\Delta_{w^{i-1}}+\varepsilon}\,d\gamma_n
\biggr|
\\
&\le
8k T
\Bigl(\int_{\mathbb R^n}(\Delta_{w^0}+\varepsilon)^{-m_1}\,d\gamma_n\Bigr)^{1/m_1}
+
32k^2
T^2\varepsilon^{-1}
\Bigl(\int_{\mathbb R^n}(\Delta_{w^0}+\varepsilon)^{-m_2}\,d\gamma_n\Bigr)^{1/m_2}.
\end{align*}

Putting these two estimates together, we obtain
\begin{align*}
&\biggl|
\int_{\mathbb R^n}
(\Delta_{w^{i-1}}+\varepsilon)^{-1}
\sum_{j=1}^k
\bigl\langle
\nabla \Phi_i(w^{i-1})-\nabla \Phi_i(w^i),
\nabla w_j^{i-1}
\bigr\rangle
(A_{w^{i-1}})_{i,j}\,d\gamma_n
\biggr|
\\
&\le
12k\,
T
\Bigl(\int_{\mathbb R^n}(\Delta_{w^0}+\varepsilon)^{-m_1}\,d\gamma_n\Bigr)^{1/m_1}
+
48k^{2}
T^2\varepsilon^{-1}
\Bigl(\int_{\mathbb R^n}(\Delta_{w^0}+\varepsilon)^{-m_2}\,d\gamma_n\Bigr)^{1/m_2}.
\end{align*}

\medskip
\noindent
\textbf{Step 3.3: conclusion of the estimate based on \eqref{eq-second-sum-1}.} 
Combining Steps 3.1 and 3.2 and substituting their estimates into \eqref{eq-second-sum-1}, we obtain
\begin{align*}
&\sum_{i=1}^k
\biggl|
\int_{\mathbb{R}^n}
(\varphi(w^{i-1})-\varphi(w^i))
\frac{\Delta_{w^{i-1}}}{\Delta_{w^{i-1}}+\varepsilon}\,d\gamma_n
\biggr|
\\
&\le
14k^2T\Bigl(\int_{\mathbb R^n}(\Delta_{w^0}+\varepsilon)^{-m_1}\,d\gamma_n\Bigr)^{1/m_1}
+
56k^3T^2\varepsilon^{-1}
\Bigl(\int_{\mathbb R^n}(\Delta_{w^0}+\varepsilon)^{-m_2}\,d\gamma_n\Bigr)^{1/m_2}.
\end{align*}

\medskip
\noindent
\textbf{Step 4: the resulting estimate.}
To obtain the final estimate in the cylindrical case, we use
\[
\frac{1}{\Delta_{w^k}+\varepsilon}
\le
\frac{|\Delta_{w^0}-\Delta_{w^k}|}{\varepsilon(\Delta_{w^0}+\varepsilon)}
+
\frac{1}{\Delta_{w^0}+\varepsilon},
\]
which yields
\begin{align*}
\biggl|
\int_{\mathbb R^n}
\varphi(w^k)\frac{\varepsilon}{\Delta_{w^k}+\varepsilon}\,d\gamma_n
\biggr|
&\le
\int_{\mathbb R^n}
\frac{|\Delta_{w^0}-\Delta_{w^k}|}{\Delta_{w^0}+\varepsilon}\,d\gamma_n
+
\int_{\mathbb R^n}
\frac{\varepsilon}{\Delta_{w^0}+\varepsilon}\,d\gamma_n
\\
&\le
\|\Delta_{w^0}-\Delta_{w^k}\|_{L^{\frac{p}{2k}}(\gamma_n)}
\Bigl(
\int_{\mathbb R^n}
(\Delta_{w^0}+\varepsilon)^{-m_1}\,d\gamma_n
\Bigr)^{1/m_1}
+
\int_{\mathbb R^n}
\frac{\varepsilon}{\Delta_{w^0}+\varepsilon}\,d\gamma_n
\\
&\le
4kT
\Bigl(
\int_{\mathbb R^n}
(\Delta_{w^0}+\varepsilon)^{-m_1}\,d\gamma_n
\Bigr)^{1/m_1}
+
\int_{\mathbb R^n}
\frac{\varepsilon}{\Delta_{w^0}+\varepsilon}\,d\gamma_n.
\end{align*}
Substituting this estimate, together with the estimates obtained in Steps 2 and 3, into \eqref{eq-tv-decomp}, and then passing back to the measure $\gamma$, we obtain
\begin{align}\label{eq-TV-cyl-est}
\biggl|\int_E (\varphi(f)-\varphi(g))\,d\gamma\biggr|
&\le
64k^4T^3\varepsilon^{-2}
\Bigl(\int_E(\Delta_f+\varepsilon)^{-m_3}\,d\gamma\Bigr)^{1/m_3}
\\
&+
88k^3T^2\varepsilon^{-1}
\Bigl(\int_E(\Delta_f+\varepsilon)^{-m_2}\,d\gamma\Bigr)^{1/m_2}
\nonumber
\\
&+
22k^2T
\Bigl(\int_E(\Delta_f+\varepsilon)^{-m_1}\,d\gamma\Bigr)^{1/m_1}
\nonumber
\\
&+
2\int_E \frac{\varepsilon}{\Delta_f+\varepsilon}\,d\gamma.
\nonumber
\end{align}

\medskip
\noindent
\textbf{Step 5: approximation by cylindrical functions.}
Let now
\[
f=(f_1,\ldots,f_k),\quad g=(g_1,\ldots,g_k),
\quad
f_j,g_j\in W^{2,p}(\gamma),
\]
and assume that
\[
\max_{1\le j\le k}\|f_j\|_{\dot W^{2,p}(\gamma)}\le b
\quad\text{and}\quad
\max_{1\le j\le k}\|g_j\|_{\dot W^{2,p}(\gamma)}\le b.
\]
Choose
\[
f_j^n,g_j^n\in \mathcal{FC}^\infty,
\quad j=1,\ldots,k,
\]
such that
\[
f_j^n\to f_j
\quad\text{and}\quad
g_j^n\to g_j
\quad\text{in}\quad
W^{2,p}(\gamma).
\]
Set
\[
f^n:=(f_1^n,\ldots,f_k^n)
\quad\text{and}\quad
g^n:=(g_1^n,\ldots,g_k^n).
\]
Passing to a subsequence if necessary, we may assume that
\[
f_j^n\to f_j,
\quad
g_j^n\to g_j,
\quad
\nabla f_j^n\to \nabla f_j,
\quad
\nabla g_j^n\to \nabla g_j
\quad\gamma\text{-a.e.}
\]
In particular,
\[
\Delta_{f^n}\to \Delta_f
\quad\gamma\text{-a.e.}
\]
Therefore, for each $m\in\{1,m_1,m_2,m_3\}$,
\[
(\Delta_{f^n}+\varepsilon)^{-m}\to (\Delta_f+\varepsilon)^{-m}
\quad\gamma\text{-a.e.}
\]
Since
\[
0\le (\Delta_{f^n}+\varepsilon)^{-m}\le \varepsilon^{-m},
\]
the dominated convergence theorem yields
\[
\int_E(\Delta_{f^n}+\varepsilon)^{-m}\,d\gamma
\to
\int_E(\Delta_f+\varepsilon)^{-m}\,d\gamma
\quad \forall m\in\{1,m_1,m_2,m_3\}.
\]
Since $\varphi$ is bounded and continuous, the dominated convergence theorem also gives
\[
\int_E (\varphi(f^n)-\varphi(g^n))\,d\gamma
\to
\int_E (\varphi(f)-\varphi(g))\,d\gamma.
\]
Finally, we have
\[
T_n\to T,
\]
where
\[
T_n:=
\max_{1\le j\le k}\max\bigl\{\|f_j^n\|_{\dot W^{2,p}(\gamma)}^{2k-1}, \|g_j^n\|_{\dot W^{2,p}(\gamma)}^{2k-1}\bigr\}
\max_{1\le j\le k}\|f_j^n-g_j^n\|_{W^{1,p}(\gamma)}
\]
and
\[
T:=
\max_{1\le j\le k}\max\bigl\{\|f_j\|_{\dot W^{2,p}(\gamma)}^{2k-1}, \|g_j\|_{\dot W^{2,p}(\gamma)}^{2k-1}\bigr\}
\max_{1\le j\le k}\|f_j-g_j\|_{W^{1,p}(\gamma)}\le 
b^{2k-1}\max_{1\le j\le k}\|f_j-g_j\|_{W^{1,p}(\gamma)}.
\]
Passing to the limit in \eqref{eq-TV-cyl-est}, we obtain the same estimate,
now for arbitrary $f_j,g_j\in W^{2,p}(\gamma)$.

\medskip
\noindent
\textbf{Step 6: optimization in $\varepsilon$.}
If $T=0$, then $f=g$ in $W^{1,p}(\gamma)$, and there is nothing to prove. Assume therefore that $T>0$.

Since
\[
\gamma(\Delta_f\le s)\le as^\varkappa
\quad \forall s>0,
\]
for every $m\ge1>\varkappa$ we have
\begin{align*}
\int_E(\Delta_f+\varepsilon)^{-m}\,d\gamma
&=
m\int_0^\infty (s+\varepsilon)^{-m-1}\gamma(\Delta_f\le s)\,ds
\le
am\int_0^\infty (s+\varepsilon)^{-m-1}s^\varkappa\,ds
\\
&\le
am\int_0^\infty (s+\varepsilon)^{-m-1+\varkappa}\,ds
=
a\frac{m}{m-\varkappa}\varepsilon^{\varkappa-m}
\le
(1-\varkappa)^{-1}a\,\varepsilon^{\varkappa-m},
\end{align*}
because $m\ge 1$. 
Hence
\begin{align*}
\Bigl|\int_E (\varphi(f)-\varphi(g))\,d\gamma\Bigr|
&\le
64k^4(1-\varkappa)^{-1}a^{1/m_3}T^3\varepsilon^{-3+\varkappa/m_3}
+
88k^3(1-\varkappa)^{-1}a^{1/m_2}T^2\varepsilon^{-2+\varkappa/m_2}
\\
&+
22k^2(1-\varkappa)^{-1}a^{1/m_1}T\varepsilon^{-1+\varkappa/m_1}
+
2(1-\varkappa)^{-1}a\,\varepsilon^\varkappa.
\end{align*}
Set
\[
\eta:=1+\frac{2k\varkappa}{p}.
\]
Then
\[
-3+\frac{\varkappa}{m_3}=\varkappa-3\eta,
\quad
-2+\frac{\varkappa}{m_2}=\varkappa-2\eta,
\quad
-1+\frac{\varkappa}{m_1}=\varkappa-\eta.
\]
Now choose
\[
\varepsilon:=a^{-\frac{2k}{p\eta}}T^{\frac{1}{\eta}}.
\]
Since
\[
\beta:=\frac{p\varkappa}{p+2k\varkappa}=\frac{\varkappa}{\eta},
\]
a direct computation gives
\[
a^{1/m_3}T^3\varepsilon^{-3+\varkappa/m_3}
=
a^{1/m_2}T^2\varepsilon^{-2+\varkappa/m_2}
=
a^{1/m_1}T\varepsilon^{-1+\varkappa/m_1}
=
a\varepsilon^\varkappa
=
a^{\beta/\varkappa}T^\beta.
\]
Therefore,
\begin{align*}
\biggl|\int_E (\varphi(f)-\varphi(g))\,d\gamma\biggr|
&\le
(64k^4+88k^3+22k^2+2)(1-\varkappa)^{-1}a^{\beta/\varkappa}T^\beta
\\
&\le
176k^4(1-\varkappa)^{-1}a^{\beta/\varkappa}T^\beta.
\end{align*}
Since
\[
T\le b^{2k-1}\max_{1\le j\le k}\|f_j-g_j\|_{W^{1,p}(\gamma)},
\]
taking the supremum over all $\varphi\in C_0^\infty(\mathbb R^k)$ with $\|\varphi\|_\infty\le 1$, we get
\[
d_{\rm TV}(f,g)
\le
176k^4(1-\varkappa)^{-1}a^{\beta/\varkappa}
\bigl(b^{2k-1}\max_{1\le j\le k}\|f_j-g_j\|_{W^{1,p}(\gamma)}\bigr)^\beta.
\]
This completes the proof.
\end{proof}

\begin{theorem}\label{t3.2}
Let $k,d\in\mathbb N$ with $d\ge 2$, and let $a,b>0$. Set
\[
\varkappa:=\frac{1}{2k(d-1)}
\quad\text{and}\quad
s:=\frac{d}{2(d-1)}.
\]
Then, for every pair of mappings
\[
f=(f_1,\ldots,f_k),
\quad
g=(g_1,\ldots,g_k),
\quad
f_j,g_j\in \mathcal P_d(\gamma),
\]
satisfying
\[
\int_E \Delta_f\,d\gamma \ge a
\quad
\text{and}
\quad
\max_{1\le j\le k}\operatorname{Var}_\gamma(f_j)\le b,
\]
and for every $\beta\in(0,\varkappa)$, one has
\begin{equation}\label{eq-L2-est-1}
d_{\rm TV}(f,g)
\le
C_1k^5d(\varkappa-\beta)^{-s}
\bigl(a^{-1}b^{\frac{2k-1}{2}}\max_{1\le j\le k}\|f_j-g_j\|_{L^2(\gamma)}\bigr)^\beta,
\end{equation}
where $C_1>0$ is an absolute constant.
	
In particular, optimizing over $\beta$, we obtain
\[
d_{\rm TV}(f,g)
\le
C_2k^5d
\bigl(1+\bigl|\ln\bigl(a^{-1}b^{\frac{2k-1}{2}}\max_{1\le j\le k}\|f_j-g_j\|_{L^2(\gamma)}\bigr)\bigr|\bigr)^s
\bigl(a^{-1}b^{\frac{2k-1}{2}}\max_{1\le j\le k}\|f_j-g_j\|_{L^2(\gamma)}\bigr)^\varkappa
\]
for some absolute constant $C_2>0$.
\end{theorem}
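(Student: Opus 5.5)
The plan is to derive Theorem~\ref{t3.2} from Theorem~\ref{t3.1} in the same way Theorem~\ref{t1.2} was derived from Theorem~\ref{t1.1}. Hypercontractivity supplies the Sobolev bounds, Carbery--Wright supplies the small-ball bound, and the free parameter $p$ is then chosen in terms of $\varkappa-\beta$.

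\textbf{Reductions.} Subtracting the constant $\int_E f_j\,d\gamma$ from both $f_j$ and $g_j$ changes neither $d_{\rm TV}(f,g)$, nor $\Delta_f$, nor $f_j-g_j$. So we may assume $f_j$ is centered, and then $\|f_j\|_{L^2(\gamma)}\le\sqrt b$. Set
\[
D:=\max_j\|f_j-g_j\|_{L^2(\gamma)},\qquad A:=a^{-1}b^{\frac{2k-1}{2}}D .
\]
If $A\ge1$ there is nothing to prove, since $d_{\rm TV}\le 2$; so assume $A<1$. The difficulty here is that the theorem imposes no variance assumption on $g$. If $D\le\sqrt b$, then $\|g_j\|_{L^2(\gamma)}\le 2\sqrt b$. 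If $D>\sqrt b$, I first check that Hadamard's inequality (Lemma~\ref{lem6.0}) together with $\|\nabla f_j\|_{L^2}^2\le d\,b$ gives $a\le\int\Delta_f\,d\gamma\lesssim (C d b)^{k}$. This should force $A$ to be bounded below by a constant depending only on $k,d$, in which case the claim is trivial after enlarging $C_1$. If that route fails, the fallback is to insist on the explicit smallness condition $D\le\sqrt b$, since otherwise the right-hand side is already at least of order one in the relevant normalization.

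\textbf{Input bounds for Theorem~\ref{t3.1}.} Fix $p>6k$. By \eqref{eq-Sobolev-est}, applied to $f_j$ and to $g_j$ (whose centered $L^2$ norm is at most $2\sqrt b$), both Sobolev norms satisfy
\[
\|f_j\|_{\dot W^{2,p}},\ \|g_j\|_{\dot W^{2,p}}\le 6d(p-1)^{d/2}\sqrt b .
\]
Since $f_j-g_j\in\mathcal P_d(\gamma)$, the bounds \eqref{eq-hyper} and \eqref{eq-D-1}, after re-adding the constant, give
\[
\|f_j-g_j\|_{W^{1,p}}\le (1+\sqrt d)(p-1)^{d/2}D .
\]
The Carbery--Wright bound \eqref{eq:CW} gives the small-ball estimate with exponent $\varkappa$ and constant $a'=Ck(d-1)a^{-\varkappa}$, and $(1-\varkappa)^{-1}\le 2$.

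\textbf{Applying Theorem~\ref{t3.1}.} The theorem yields $d_{\rm TV}\le Ck^4\,(\dots)^{\beta_p}$ with $\beta_p=p\varkappa/(p+2k\varkappa)$. Inside the bracket, $a'^{1/\varkappa}=(Ck(d-1))^{1/\varkappa}a^{-1}$, and raising $(Ck(d-1))^{1/\varkappa}$ to the power $\beta_p$ costs at most $Ckd$, because $\beta_p\le\varkappa$. The powers of $p$ collect as
\[
p^{\frac d2(2k-1)\beta_p+\frac d2\beta_p}=p^{dk\beta_p}\le p^{dk\varkappa}=p^{s}.
\]
Powers of $d$ such as $(6d)^{(2k-1)\beta_p}$ are bounded by absolute constants, as in the proof of Theorem~\ref{t1.2}. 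This gives
\[
d_{\rm TV}(f,g)\le Ck^5 d\,p^{s}A^{\beta_p}.
\]

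\textbf{Choice of $p$.} The step I expect to be the real constraint is reconciling $p>6k$ with $p\approx(\varkappa-\beta)^{-1}$. We have $\varkappa-\beta_p=\frac{2k\varkappa^2}{p+2k\varkappa}\le p^{-1}$. Take $p:=6k+(\varkappa-\beta)^{-1}$. Then $\beta_p\ge\beta$, so $A^{\beta_p}\le A^\beta$ because $A<1$. Also $p\le 7k(\varkappa-\beta)^{-1}$, and since $s\le1$ we get $p^s\le 7k(\varkappa-\beta)^{-s}$. This is where the extra factor of $k$ comes from, and together with $k^4$ and the $k$ from Carbery--Wright it is absorbed into $C_1k^5d$ after adjusting constants. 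This proves \eqref{eq-L2-est-1}.

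\textbf{Logarithmic version.} As in Theorem~\ref{t1.2}: for $A<e^{-1/\varkappa}$ take $\beta=\varkappa-1/\ln A^{-1}$, so that $A^{-1/\ln A^{-1}}=e$. For $A\ge e^{-1/\varkappa}$, use $d_{\rm TV}\le 2\le 2eA^\varkappa$.
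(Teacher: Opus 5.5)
Your route is the paper's: centering, the hypercontractive bounds \eqref{eq-Sobolev-est} and \eqref{eq-D-1}, Carbery--Wright, Theorem~\ref{t3.1} with $\beta_p$, the choice $p=6k+(\varkappa-\beta)^{-1}$, a case split on $D$ versus $\sqrt b$, and the same logarithmic optimization. However, the final bookkeeping in $k$ does not give the stated bound.

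\textbf{The power of $k$.} Your factors are $k^4$ from Theorem~\ref{t3.1}, $k$ from $(Ck(d-1))^{\beta_p/\varkappa}$, and $k$ from $p^s\le 7k(\varkappa-\beta)^{-s}$. That is $k^6$, not $k^5$. For $d=2$ we have $s=1$, so the last factor really is of order $k$ and cannot be absorbed into an absolute $C_1$. The missing observation is that $(\varkappa-\beta)^{-1}>\varkappa^{-1}=2k(d-1)\ge 2k$. Hence $6k\le 3(\varkappa-\beta)^{-1}$, so $p\le 4(\varkappa-\beta)^{-1}$ and $p^s\le 4(\varkappa-\beta)^{-s}$ with no loss in $k$. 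This is exactly what the paper uses, and it yields $k^5$.

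\textbf{The case $D>\sqrt b$.} You leave this as ``this should force''. It does work, but two things must be supplied.
\begin{enumerate}
\item The bound $\|\nabla f_j\|_{L^2}^2\le db$ alone does not control $\int_E\prod_j|\nabla f_j|^2\,d\gamma$. You need H\"older with $L^{2k}$ norms plus hypercontractivity, e.g.\ \eqref{eq-D-1} with exponent $2k$. This gives $a\le d^k(2k-1)^{k(d-1)}b^k$, hence $A\ge a^{-1}b^k\ge d^{-k}(2k-1)^{-k(d-1)}$.
\item A lower bound on $A$ ``depending only on $k,d$'' is not by itself enough for an absolute $C_1$. You must check that
\[
A^\beta\ge\min\{1,A^\varkappa\}\ge d^{-\frac{1}{2(d-1)}}(2k-1)^{-1/2}\ge (2\sqrt k)^{-1},
\]
which is then absorbed by $k^5d$.
\end{enumerate}
The paper avoids this separate computation by using the same $p$: it gets $a\le d^k(p-1)^{k(d-1)}b^k$, and the loss $(p-1)^{k(d-1)\beta_p}\le p^s$ merges with the other case. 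Your fallback of ``insisting on $D\le\sqrt b$'' is not available, since that is not a hypothesis of the theorem.
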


\begin{proof}
The proof follows the same lines as the proof of Theorem \ref{t1.2}.
Fix $p>6k$, which will be specified later.
Since $d_{\rm TV}(f,g)$ and 
$\max_{1\le j\le k}\|f_j-g_j\|_{L^2(\gamma)}$
are invariant under the shift of both mappings by the same constant vector,
without loss of generality we may assume that
\[
\int_Ef_j\, d\gamma=0\quad \forall j\in\{1,\ldots, k\}.
\]	
Set
\[
D:=\max_{1\le j\le k}\|f_j-g_j\|_{L^2(\gamma)}.
\]

By \eqref{eq-Sobolev-est} we have
\[
\|f_j\|_{\dot W^{2,p}(\gamma)}
\le
3d(p-1)^{d/2}\|f_j\|_{L^2(\gamma)}\le 3d(p-1)^{d/2}\sqrt b
\]
and
\[
\|f_j-g_j\|_{\dot W^{2,p}(\gamma)}
\le 3d(p-1)^{d/2}\|f_j-g_j\|_{L^2(\gamma)}
\le 3d(p-1)^{d/2}D
\]
implying that
\[
\|g_j\|_{\dot W^{2,p}(\gamma)}\le
3d(p-1)^{d/2}\bigl(D + \sqrt{b}\bigr).
\]
By hypercontractivity \eqref{eq-hyper},
\[
\|f_j-g_j\|_{L^p(\gamma)}\le (p-1)^{d/2}\|f_j-g_j\|_{L^2(\gamma)}.
\]
By \eqref{eq-D-1},
\[
\|\nabla f_j - \nabla g_j\|_{L^p(\gamma)}
\le
\sqrt{d}(p-1)^{(d-1)/2}\|f_j-g_j\|_{L^2(\gamma)}.
\]
Therefore,
\[
\|f_j-g_j\|_{W^{1,p}(\gamma)}
\le
2\sqrt d\,(p-1)^{d/2}\|f_j-g_j\|_{L^2(\gamma)},
\]
and hence
\[
\max_{1\le j\le k}\|f_j-g_j\|_{W^{1,p}(\gamma)}
\le
2\sqrt d\,(p-1)^{d/2}D.
\]
Since $\Delta_f\in \mathcal P_{2k(d-1)}(\gamma)$,
by the Carbery--Wright inequality (see \cite{CW01}),
\[
\gamma(\Delta_f\le \varepsilon)
\le
Ckd\,a^{-\frac{1}{2k(d-1)}}\varepsilon^{\frac{1}{2k(d-1)}}
=
Ckda^{-\varkappa}\varepsilon^\varkappa
\]
for all $\varepsilon>0$, where $C\ge1$ is an absolute constant.
	
Assume first that $D\le \sqrt{b}$ and apply Theorem \ref{t3.1} with
\[
\beta_p:=\frac{p\varkappa}{p+2k\varkappa}.
\]
We obtain
\begin{align*}
d_{\rm TV}(f,g)
&\le
176k^4(1-\varkappa)^{-1}
(Ckd)^{\beta_p/\varkappa}
a^{-\beta_p}
\bigl(6d(p-1)^{d/2}\sqrt b\bigr)^{(2k-1)\beta_p}
\bigl(2\sqrt d\,(p-1)^{d/2}D\bigr)^{\beta_p}.
\end{align*}
Since 
\[
(1-\varkappa)^{-1}\le 2
\quad\text{and}\quad
\beta_p\le \varkappa=\frac{1}{2k(d-1)}\le \frac{1}{2k},
\]
we have
\[
(Ckd)^{\beta_p/\varkappa}\le Ckd
\]
and
\[
(6d)^{(2k-1)\beta_p}(2\sqrt d)^{\beta_p}
=
6^{(2k-1)\beta_p}2^{\beta_p}d^{(2k-\frac12)\beta_p}
\le 12\, d^{\frac{1}{d-1}}\le 24.
\]
Furthermore,
\[
\frac d2(2k-1)\beta_p+\frac d2\beta_p=kd\,\beta_p\le kd\,\varkappa=s.
\]
Thus,
\[
d_{\rm TV}(f,g)
\le
C_0k^5d\,p^s
\bigl(a^{-1}b^{\frac{2k-1}{2}}D\bigr)^{\beta_p},
\]
where $C_0\ge4$ is an absolute constant.
	
If $D\ge \sqrt b$, then, by Lemma \ref{lem6.0} and \eqref{eq-D-1},
\[
a\le \int_E\Delta_f\, d\gamma
\le
\int_E\prod_{j=1}^k|\nabla f_j|^2\, d\gamma
\le
\prod_{j=1}^k\|\nabla f_j\|_{L^p(\gamma)}^2
\le
d^k(p-1)^{k(d-1)}b^k.
\]
Therefore,
\[
d_{\rm TV}(f,g)
\le
2\le 2(b^{-1/2}D)^{\beta_p}
\le
2d^{k\beta_p}(p-1)^{k(d-1)\beta_p}\bigl(a^{-1}b^{\frac{2k-1}{2}}D\bigr)^{\beta_p}.
\]
Moreover,
\[
d^{k\beta_p}
\le
d^{k\varkappa}
=
d^{\frac{1}{2(d-1)}}\le 2
\]
and
\[
(p-1)^{k(d-1)\beta_p}
\le
p^{kd\varkappa}
= p^s.
\]
Thus, in the case $D\ge \sqrt b$, we also obtain
\[
d_{\rm TV}(f,g)
\le
4p^s\bigl(a^{-1}b^{\frac{2k-1}{2}}D\bigr)^{\beta_p}
\le
C_0k^5d\,p^s\bigl(a^{-1}b^{\frac{2k-1}{2}}D\bigr)^{\beta_p}.
\]

Now fix $\beta\in(0,\varkappa)$ and choose
\[
p:=(\varkappa-\beta)^{-1}+6k.
\]
Then $p>6k$. Moreover,
\[
\varkappa-\beta_p
=
\frac{2k\varkappa^2}{p+2k\varkappa}
\le
\frac{1}{p}\le \varkappa - \beta,
\]
that is, $\beta\le \beta_p$.
We also note that
\[
(\varkappa-\beta)^{-1}>\varkappa^{-1}=2k(d-1)\ge 2k,
\]
so
\[
6k\le 3(\varkappa-\beta)^{-1}
\quad\text{and}\quad
p\le 4(\varkappa-\beta)^{-1},
\]
and thus
\[
p^s\le 4^s(\varkappa-\beta)^{-s}\le 4(\varkappa-\beta)^{-s}.
\]
	
Set
\[
A:=a^{-1}b^{\frac{2k-1}{2}}D.
\]
If $A\le 1$, then
$A^{\beta_p}\le A^\beta$
and
\[
d_{\rm TV}(f,g)
\le
4C_0k^5d(\varkappa-\beta)^{-s}A^\beta.
\]
If $A\ge 1$, then
\[
d_{\rm TV}(f,g)\le 2\le 4C_0k^5d(\varkappa-\beta)^{-s}A^\beta.
\]
This proves \eqref{eq-L2-est-1} with $C_1=4C_0$.
	
To optimize over $\beta$, we argue exactly as in the proof of Theorem \ref{t1.2}. If
$A<e^{-1/\varkappa}$,
choose
\[
\beta=\varkappa-\frac{1}{\ln A^{-1}}.
\]
Then $\beta\in(0,\varkappa)$
and
\[
(\varkappa-\beta)^{-s}A^\beta
=
(\ln A^{-1})^sA^{\varkappa-\frac{1}{\ln A^{-1}}}
=
e(\ln A^{-1})^sA^\varkappa.
\]
If
$A\ge e^{-1/\varkappa}$,
then
\[
d_{\rm TV}(f,g)\le 2\le 2e\,A^\varkappa
\le
2ek^5d\bigl(1+|\ln A|\bigr)^sA^\varkappa.
\]
Consequently, in both cases,
\[
d_{\rm TV}(f,g)
\le
C_2k^5d
\bigl(1+\bigl|\ln\bigl(a^{-1}b^{\frac{2k-1}{2}}D\bigr)\bigr|\bigr)^s
\bigl(a^{-1}b^{\frac{2k-1}{2}}D\bigr)^\varkappa,
\]
which completes the proof.
\end{proof}

\section*{Use of AI Tools}

ChatGPT was used for language editing, stylistic suggestions, draft wording for
selected passages, and help with locating some references. All AI-generated
text and suggested references were checked, corrected where necessary, and
substantially revised by the authors. The authors take full responsibility for
the content of the paper.





\section*{Acknowledgements}

This work was supported by the AEI grants 
RYC2023-043616-I and
PID2025-169712NA-I00 funded by MICIU/AEI/10.13039/501100011033,
and by the Spanish State Research Agency, through the Severo Ochoa and Mar\'ia de Maeztu Program for Centers and
Units of Excellence in R\&D (CEX2020-001084-M).
The authors thanks CERCA Programme (Generalitat de Catalunya) for institutional support.
{\sloppy
	
}


\begin{thebibliography}{99}
\normalsize
	
\bibitem{BC14}
Bally, V., Caramellino, L.:
On the distances between probability density functions.
Electron. J. Probab. \textbf{19}, Paper No.~110, 1--33 (2014)
	
\bibitem{BC17}
Bally, V., Caramellino, L.:
Convergence and regularity of probability laws by using an interpolation method.
Ann. Probab. \textbf{45}(2), 1110--1159 (2017)
	
\bibitem{BC19}
Bally, V., Caramellino, L.:
Total variation distance between stochastic polynomials and invariance principles.
Ann. Probab. \textbf{47}(6), 3762--3811 (2019)
	
\bibitem{BCP}
Bally, V., Caramellino, L., Poly, G.:
Regularization lemmas and convergence in total variation.
Electron. J. Probab. \textbf{25}, Paper No.~92, 1--20 (2020)
	
\bibitem{BIN}
Besov, O.V., Il'in, V.P., Nikol'skii, S.M.:
\emph{Integral Representations of Functions and Imbedding Theorems}, Vols.~I, II.
Winston, Washington; Halsted Press, New York--Toronto--London (1978, 1979)
	
\bibitem{Gaus}
Bogachev, V.I.:
\emph{Gaussian Measures}.
Mathematical Surveys and Monographs, vol.~62. American Mathematical Society, Providence (1998)
	
\bibitem{BKZ}
Bogachev, V.I., Kosov, E.D., Zelenov, G.I.:
Fractional smoothness of distributions of polynomials and a fractional analog of the Hardy--Landau--Littlewood inequality.
Trans. Amer. Math. Soc. \textbf{370}(6), 4401--4432 (2018)
	
\bibitem{CW01}
Carbery, A., Wright, J.:
Distributional and $L^q$ norm inequalities for polynomials over convex bodies in $\mathbb R^n$.
Math. Res. Lett. \textbf{8}(3), 233--248 (2001)
	
\bibitem{DM87}
Davydov, Y.A., Martynova, G.V.:
Limit behavior of multiple stochastic integrals.
In: \emph{Statistics and Control of Random Processes}, pp.~55--57.
Nauka, Moscow (1987)
	
\bibitem{ENP25}
Ebina, M., Nourdin, I., Peccati, G.:
Optimal local central limit theorems on Wiener chaos.
Preprint, arXiv:2511.21496 (2025)
	
\bibitem{HMP25}
Herry, R., Malicet, D., Poly, G.:
Regularity of laws via Dirichlet forms: application to quadratic forms in independent and identically distributed random variables.
Probab. Theory Related Fields \textbf{191}, 523--567 (2025)
	
\bibitem{HMP24}
Herry, R., Malicet, D., Poly, G.:
Superconvergence phenomenon in Wiener chaoses.
Ann. Probab. \textbf{52}(3), 1162--1200 (2024)
	
\bibitem{HMP}
Herry, R., Malicet, D., Poly, G.:
Limit distributions for polynomials with independent and identically distributed entries.
Preprint, arXiv:2412.06749 (2024)
	
\bibitem{HLN14}
Hu, Y., Lu, F., Nualart, D.:
Convergence of densities of some functionals of Gaussian processes.
J. Funct. Anal. \textbf{266}(2), 814--875 (2014)
	
\bibitem{Jan97}
Janson, S.:
\emph{Gaussian Hilbert Spaces}.
Cambridge Tracts in Mathematics, vol.~129. Cambridge University Press, Cambridge (1997)
	
\bibitem{Kos}
Kosov, E.D.:
Fractional smoothness of images of logarithmically concave measures under polynomials.
J. Math. Anal. Appl. \textbf{462}(1), 390--406 (2018)
	
\bibitem{Kos-FCAA}
Kosov, E.D.:
On fractional regularity of distributions of functions in Gaussian random variables.
Fract. Calc. Appl. Anal. \textbf{22}(5), 1249--1268 (2019)
	
\bibitem{Kos-MS}
Kosov, E.D.:
Besov classes on finite and infinite dimensional spaces.
Sb. Math. \textbf{210}(5), 663--692 (2019)
	
\bibitem{Kos-IMRN}
Kosov, E.D.:
Total variation distance estimates via $L^2$-norm for polynomials in log-concave random vectors.
Int. Math. Res. Not. IMRN \textbf{2021}(21), 16492--16508 (2021)
	
\bibitem{Kos-Adv}
Kosov, E.D.:
Regularity of linear and polynomial images of Skorohod differentiable measures.
Adv. Math. \textbf{397}, Paper No.~108193 (2022)
	
\bibitem{Kos23}
Kosov, E.D.:
Regularity of distributions of Sobolev mappings in abstract settings.
Math. Notes \textbf{114}(5), 862--874 (2023)
	
\bibitem{KosZh}
Kosov, E.D., Zhukova, A.K.:
Improved bounds for the total variation distance between stochastic polynomials.
Stochastic Process. Appl. \textbf{170}, Paper No.~104279 (2024)
	
\bibitem{Kos25}
Kosov, E.D.:
Oscillatory integrals with polynomial phase and regularity of distributions.
Preprint, arXiv:2511.02679 (2025)

\bibitem{NNP}
Nourdin, I., Nualart, D., Poly, G.:
Absolute continuity and convergence of densities for random vectors on Wiener chaos.
Electron. J. Probab. \textbf{18}, Paper No.~22, 1--19 (2013)
	
\bibitem{NN16}
Nourdin, I., Nualart, D.:
Fisher information and the fourth moment theorem.
Ann. Inst. Henri Poincar\'e Probab. Stat. \textbf{52}(2), 849--867 (2016)
	
\bibitem{NP09}
Nourdin, I., Peccati, G.:
Stein's method on Wiener chaos.
Probab. Theory Related Fields \textbf{145}(1--2), 75--118 (2009)
	
\bibitem{NP12}
Nourdin, I., Peccati, G.:
\emph{Normal Approximations with Malliavin Calculus: From Stein's Method to Universality}.
Cambridge Tracts in Mathematics, vol.~192. Cambridge University Press, Cambridge (2012)
	
\bibitem{NP13}
Nourdin, I., Poly, G.:
Convergence in total variation on Wiener chaos.
Stochastic Process. Appl. \textbf{123}(2), 651--674 (2013)
	
\bibitem{NP05}
Nualart, D., Peccati, G.:
Central limit theorems for sequences of multiple stochastic integrals.
Ann. Probab. \textbf{33}(1), 177--193 (2005)
	
\bibitem{PT04}
Peccati, G., Tudor, C.A.:
Gaussian limits for vector-valued multiple stochastic integrals.
In: \emph{S\'eminaire de Probabilit\'es XXXVIII}, Lecture Notes in Math., vol.~1857, pp.~247--262.
Springer, Berlin (2005)
	
\bibitem{Stein}
Stein, E.M.:
\emph{Singular Integrals and Differentiability Properties of Functions}.
Princeton Mathematical Series, vol.~30. Princeton University Press, Princeton (1970)
	
\end{thebibliography}
\end{document}